
\documentclass[14pt]{amsart}
\textwidth=5.4in \textheight=8in

\usepackage{latexsym, amssymb}
\usepackage[usenames]{color}
\usepackage[all]{xy}

\newcommand{\be}{\begin{equation}}
\newcommand{\ee}{\end{equation}}
\newcommand{\beq}{\begin{eqnarray}}
\newcommand{\eeq}{\end{eqnarray}}

\newtheorem{thm}{Theorem} [section]
\newtheorem{cor}[thm]{Corollary}
\newtheorem{lem}[thm]{Lemma}

\newtheorem{prop}[thm]{Proposition}
\theoremstyle{definition}

\theoremstyle{remark}

\numberwithin{equation}{section}


\begin{document}
\title[$L$-functions of generalized Kloosterman sums and differential equations]
{ $L$-functions for families of generalized Kloosterman sums and $p$-adic differential equations}
\begin{abstract}


In this paper, we focus on a family of generalized Kloosterman sums over the torus.
With a few changes to Haessig and Sperber's
construction, we derive some relative $p$-adic cohomologies corresponding to the $L$-functions.
We present explicit forms of bases of top dimensional cohomology spaces, so to obtain a concrete method to compute lower bounds of Newton polygons of the $L$-functions. Using the theory of GKZ system,
we derive the Dwork's deformation equation for our family.
Furthermore, with the help of Dwork's dual theory and deformation theory, the strong Frobenius structure of this equation is established.
Our work adds some new evidences for Dwork's conjecture.
\end{abstract}

\author[C.L. Wang]{Chunlin Wang}
\address{School of Mathematical Sciences, Sichuan Normal University, Chengdu 610064, P.R. China}
\email{c-l.wang@outlook.com}

\author[L.P. Yang]{Liping Yang$^*$}
\address{School of Mathematical Sciences, Capital Normal University, Beijing 100048, P.R. China}
\email{yanglp2013@126.com}

\thanks{$^*$ L.P. Yang is the corresponding author and supported by Beijing Postdoctoral Research Foundation (No. 20540060001).
C.L. Wang is supported by National Natural Science Foundation of China (No. 11901415) and the China Scholarship Council (No. 201808515110).
}
\keywords{$L$-functions, Newton polygon, $p$-Adic cohomology, Dwork's theory, Frobenius structure}
\subjclass[2010]{Primary 11L05, 11S40, 14F30, 14G15}
\maketitle

\section{Introduction}

Let $p$ be a prime, and $\mathbb{F}_q$ be the finite field of $q$ elements with characteristic $p$.
For Laurent polynomial $g(x)\in \mathbb{F}_q[x_1^{\pm},\cdots ,x_n^{\pm}]$, the toric exponential sum is defined as
$$S_k(g):=\sum_{x\in (\mathbb{F}_{q^{k}}^{*})^{n}} \exp\Big(\frac{2\pi i}{p}Tr_k g(x)\Big),$$
where $Tr_k: \mathbb{F}_{q^k}\rightarrow \mathbb{F}_p$ and $\mathbb{F}_{q^{k}}^{*}$ denotes the set of non-zero elements in $\mathbb{F}_{q^{k}}$.
By a theorem of Dwork-Bombieri-Grothendieck, the following generating $L$-function is a rational function
$$L(g,T):=\exp\Big(\sum_{k=1}^{\infty} \frac{S_k(g)T^k}{k}\Big)=\frac{\prod_{i=1}^{d_1}(1-\alpha_i T)}{\prod_{j=1}^{d_2}(1-\beta_j T)}, $$
where $\alpha_{i}(1\le i\le d_1)$ and $\beta_j(1\le j\le d_2)$ are non-zero algebraic integers.
From Deligne's integrality theorem, one has the following estimates
$$|\alpha_i|_p=q^{-r_i}, |\beta_j|_p=q^{-s_j},r_i\in \mathbb{Q}\cap[0,n], s_j\in \mathbb{Q}\cap[0,n]$$
with normalized $p$-adic absolute value $|\ |_p$ such that $|q|_p=q^{-1}$.
The rational number $r_i$ (resp. $s_j$)
is called {\it slope} of $\alpha_i$ (resp. $\beta_j$) with respect to $q$.
The $p$-adic Riemann hypothesis for the $L$-function $L(g,T)$ is to determine the slopes of the zeros and poles.
This is an extremely hard question if there is no any smoothness condition on $g$.
When $g$ is nondegenerate, Adolphson and Sperber \cite{AS89} showed that
the $L$-function $L(g,T)^{(-1)^{n-1}}$ is a polynomial whose degree $m$ is determined by the volume of the Newton polyhedron of $g$.
If we write
$$L(g,T)^{(-1)^{n-1}}=\sum_{i=0}^mA_iT^i,$$
then the $q$-adic {\it Newton polygon} of $L(g,T)^{(-1)^{n-1}}$, denoted by $NP(g)$, is the lower convex closure in $\mathbb{R}^2$
of the following points
$$(r,ord_q A_r), r=0,1,...,m.$$
Further more, the $q$-adic Newton polygon of $L(g,T)^{(-1)^{n-1}}$ determines the slopes of its reciprocal roots.

The $L$-functions of classical Kloosterman families have been studied by Dwork \cite{[Dwo74]} and Sperber \cite{Sp1},\cite{Sp2} using
Dwork's $p$-adic theory.
For the Kloosterman family
$g^{(1)}(\Lambda,x)=x+\Lambda/x$ with parameter $\Lambda\in \mathbb{F}_q$,
Dwork derived that $\{0,1\}$ is the slope sequence of the $L$-function $L(g^{(1)}(\Lambda,x), T)$.
Later, Sperber \cite{Sp1},\cite{Sp2} generalized Dwork's results to the $n$ variables case given by
$$ g^{(n)}(\Lambda,x)=x_1+\cdots+x_n+\frac{\Lambda}{x_1\cdots x_n}$$
and proved that the slope sequence of $L(g^{(n)}(\Lambda,x),T)^{(-1)^{n-1}}$
should be $\{0,1,...,n\}$.

The classical Kloosterman family $g^{(1)}(\Lambda,x)$ was also studied by Robba \cite{Ro} using $p$-adic cohomology. Robba \cite{Ro} proved that the symmetric power $L$-function of this family is a polynomial with coefficients in $\mathbb{Z}$ and can be factored into two parts:
the trivial one and the one satisfying the functional equation. Fu and Wan \cite{[FW05]}, \cite{[FW08]}, \cite{[FW082]}, \cite{[FW10]} studied the hyper-Kloosterman family and its symmetric power $L$-functions using $l$-adic method.
Recently, Haessig and Sperber \cite{HS17} studied the following generalized Kloosterman family
$$\bar{F}(\Lambda,x):=\bar{f}(x)+\Lambda x^{\mu}\in \mathbb{F}_q[\Lambda][x_1^{\pm},\cdots,x_n^{\pm }],$$
 where $\bar{f}(x)$ is an arbitrary quasi-homogeneous nondegenerate Laurent polynomial and the deforming monomial
 $\Lambda x^{\mu}$ is an arbitrary monomial where $\mu$ does not lie on the affine hyperplane spanned by the support of $\bar{f}$.
Haessig and Sperber constructed some relative $p$-adic cohomologies to give sufficiently sharp estimates for the degree and the total degree of 
symmetric power $L$-functions.

In this paper, we focus on the following kind of generalized Kloosterman family defined by Haessig and Sperber
$$\bar{F}(\Lambda,x)=x_1^a+x_2^b+\frac{\Lambda}{x_1^cx_2^d},$$
 where $a,b,c,d$ are positive integers such that
$$\gcd(a,b)=\gcd(a,c)=\gcd(b,c)=\gcd(b,d)=1 \ {\rm and}\ p\nmid abcd.$$
Let $\bar{\mathbb{F}}_q$ be an algebraic closure of $\mathbb{F}_q$. Fix $\bar{\lambda}\in \bar{\mathbb{F}}_q^{*}$
and let $\lambda$ be its Teichm$\ddot{\rm {u}}$ller representative.
Let $\mathcal{C}_0$ denote the $p$-adic Banach space consisting of formal power series.
Let $F(\Lambda,x)$ be the Teichm${\rm \ddot{u}}$ller lifting of
$\bar{F}(\Lambda,x)$. Then let
$$F^{(0)}(\Lambda,x):=\pi F(\Lambda,x)$$
with $\pi^{p-1}=-p$.
For $l=1,2$, we define differential operator $D_{l,\Lambda}$ by
$$D_{l,\Lambda}:=x_l\frac{\partial}{\partial x_l}+x_l \frac{\partial F^{(0)}}{\partial x_l}.$$
Then Haessig and Sperber \cite{HS17} constructed the $p$-adic complex $\Omega^{\bullet}(\mathcal{C}_0,\nabla(D^{(\Lambda)}))$
such that
$$L(\bar{F}(\bar{\lambda},x), T)^{-1}=\det (1-Frob_{\lambda}T|H^2(\mathcal{C}_{0,\lambda}, \nabla (D^{(\lambda)})),$$
where $\mathcal{C}_{0,\lambda}$ and $D_{l,\lambda}$ are specializing of $\mathcal{C}_0$ and $D_{l,\Lambda}$
at $\Lambda=\lambda$, respectively.

One aim of this paper is to study the slopes of zeros of $L(\bar{F}(\bar{\lambda},x),T)^{-1}$.
For this purpose, we use relative $p$-adic cohomologies constructed by Haessig and Sperber \cite{HS17} but with a few modifications
such as using another Dwork's splitting function and letting $p>2$.
As we all know, the Newton polygon has a topological lower bound, called Hodge polygon.
The general way to calculate Newton polygon is to first compute its lower bound,
and then determine when the Newton polygon and the Hodge polygon coincide.
It is known from the work of Adolphson and Sperber \cite{AS89} that the Hodge polygon is totally determined by the weight of a basis of the top cohomology space $H^2(\Omega^{\bullet}(\mathcal{C}_0,\nabla(D^{(\Lambda)})))$, this makes us to study the explicit form of the basis. Our main tool is the specific reduction of elements of
a polynomial ring by the ideals generated by differential operators $\{D_{l,\lambda}\}_{l=1,2}$. To describe our results, we define
set $B:=\{ x_1^{v_1}x_2^{v_2}: v_1,v_2\in \mathbb{Z}\}$, where $(v_1, v_2)$ such that
\begin{align}\label{eq02}
&\left\{
  \begin{array}{ll}
  -c<v_1\le a,-d<v_2\le b, \frac{d-1}{c-1}(v_1-a)\le v_2< \frac{d-1}{c-1}v_1+b,&  \hbox{if $c>1$, $d>1$};\\
   -c<v_1\le a,-d<v_2\le b, (v_1,v_2)\neq (a,v_2)\ {\rm with}\ v_2=0,-1,...,1-d,& \hbox{if $c=1$, $d>1$};\\
-c<v_1\le a,-d<v_2\le b, (v_1,v_2)\neq (v_1,b)\ {\rm with}\ v_1=0,-1,...,1-c,&   \hbox{if $c>1$, $d=1$};\\
-c<v_1\le a,-d<v_2\le b, (v_1,v_2)\neq (0,b),&   \hbox{if $c=d=1$}.\\
  \end{array}
\right.
\end{align}

\begin{thm}\label{thm10}  Let $$\bar{B}:=\{\Lambda^{m(v)}x_1^{v_1}x_2^{v_2}: x_1^{v_1}x_2^{v_2}\in B\},$$
where $m$ denotes a function on $\mathbb{Z}^2$.
Then set $\bar{B}$ forms a basis for $H^2(\Omega^{\bullet}(\mathcal{C}_0,\nabla(D^{(\Lambda)})))$.
\end{thm}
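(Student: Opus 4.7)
The plan is to identify $H^2(\Omega^\bullet(\mathcal C_0,\nabla(D^{(\Lambda)})))$ with the quotient $\mathcal C_0/(D_{1,\Lambda}\mathcal C_0+D_{2,\Lambda}\mathcal C_0)$ and construct an explicit reduction procedure that rewrites an arbitrary element, modulo this image, as a convergent $\Lambda$-linear combination of the elements of $\bar B$. The exponent $m(v)$ will emerge as the bookkeeping of how many times the reduction invokes a translation by $(-c,-d)$.

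First I would make the action of $D_{l,\Lambda}$ on monomials completely explicit:
\[
D_{1,\Lambda}(x^v)=v_1 x^v+\pi a\,x^{v+(a,0)}-\pi c\,\Lambda\,x^{v+(-c,-d)},\quad D_{2,\Lambda}(x^v)=v_2 x^v+\pi b\,x^{v+(0,b)}-\pi d\,\Lambda\,x^{v+(-c,-d)},
\]
and extract three working congruences in $H^2$: the individual relations $D_{1,\Lambda}(x^v)\equiv 0$ and $D_{2,\Lambda}(x^v)\equiv 0$, together with the combination $d\,D_{1,\Lambda}(x^v)-c\,D_{2,\Lambda}(x^v)\equiv 0$, which cancels the $\Lambda$-shifted term and leaves a three-term relation among $x^{v+(a,0)}$, $x^{v+(0,b)}$, and $x^v$. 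These let us trade a lattice step of $(a,0)$, $(0,b)$, or $(-c,-d)$ (the last at the cost of a factor $\Lambda$) for a combination of the others.

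To run the reduction I would equip $\mathbb Z^2$ with the weight $w(v)=\inf\{t\ge 0:v\in t\Delta\}$ attached to $\Delta=\mathrm{conv}\{(a,0),(0,b),(-c,-d)\}$, which is bounded below by a positive constant times the Euclidean norm outside $B$ and guarantees that the iterated substitution yields a Cauchy series in $\mathcal C_0$ despite the $\pi^{-1}$ factor produced at each reduction step. Applying the relations when $v_1>a$ or $v_1\le -c$ (and symmetrically for $v_2$) drives every $v$ into the rectangle $-c<v_1\le a$, $-d<v_2\le b$. Inside this rectangle the monomials on the excluded boundary of \eqref{eq02} are precisely those on which the coefficient $v_1$ or $v_2$ vanishes, so that the natural reduction via $D_{1,\Lambda}$ or $D_{2,\Lambda}$ degenerates; one is forced to use the three-term combination, picking up one power of $\Lambda$. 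Tracing this case-by-case through the four sub-cases of \eqref{eq02} pins down the representative in $B$ and defines $m(v)$.

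For linear independence I would match cardinalities: a direct lattice-point enumeration in each case of \eqref{eq02} yields $|B|=ab+ad+bc=2!\,\mathrm{vol}(\Delta)$, which by Adolphson and Sperber \cite{AS89} is exactly the rank of $H^2$ on each nondegenerate fiber, hence the rank of the relative cohomology as a module over the base. A spanning family of the correct size is therefore a basis. The main obstacle lies in the third paragraph: one must check simultaneously that the recursion converges (the $w$-gain at every step should dominate the $\pi^{-1}$-loss) and that $m(v)$ is well-defined, independent of the order in which the rewriting rules are applied. A clean way to handle the latter is to fix a canonical normal-form procedure, exhausting the individual relations before invoking the three-term combination, so that every monomial has a unique trajectory in the reduction tree.
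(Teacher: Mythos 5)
Your overall strategy (rewrite monomials using the three relations coming from $D_{1,\Lambda}$, $D_{2,\Lambda}$ and their combination, push every exponent into the box $-c<v_1\le a$, $-d<v_2\le b$, and then get independence from the count $|B|=ab+ad+bc=2!\,\mathrm{Vol}(\Delta)$) is the same combinatorial idea as the paper's Lemmas 3.1 and 3.2. But there is a genuine gap in where you run the reduction and in the convergence mechanism you invoke. You work directly in the $p$-adic Banach module $\mathcal{C}_0$ and claim the iterated substitution converges because ``the $w$-gain at every step dominates the $\pi^{-1}$-loss.'' It does not: in the normalization of $\mathcal{C}_0$ a monomial enters with the factor $\pi^{w(v)}$, and in a typical reduction step (e.g.\ solving for $x^{v+(a,0)}$ from $D_{1,\Lambda}(x^v)\equiv 0$) the weight drops by exactly $1$ while you divide by exactly one power of $\pi$, so the norm of the new terms is not smaller than that of the old one. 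The infinite rewriting therefore does not produce a Cauchy series for the reason you give, and this is precisely the hard analytic point. The way this is actually handled (and the way the paper proceeds) is to carry out the reduction where it is a \emph{finite} algebraic computation, namely modulo $\tilde{\pi}$ in the graded ring $\bar{R}^{(\bar{\lambda})}$ over $\mathbb{F}_q$ (Lemmas 3.1 and 3.2), and then to quote the Haessig--Sperber structure theorems (Theorems 2.2, 2.5, 3.1, 3.2 of \cite{HS17}, restated here as Theorems 2.1, 2.3, 2.4, 2.5), whose successive-approximation argument is exactly what converts a monomial spanning set mod $\tilde{\pi}$ into an $\mathcal{O}_0$-basis $\{\pi^{w(v)}\Lambda^{m(v)}x^v\}$ of the relative $H^2$. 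If you are willing to quote those theorems for the independence/freeness step (your cardinality argument already needs the freeness of relative $H^2$ over $\mathcal{O}_0$, not just the fiberwise Adolphson--Sperber rank), you should quote them for the lifting as well; then your proof collapses to the paper's.

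Two smaller points. First, in the paper $m(v)$ is not bookkeeping extracted from the reduction: it is the explicit piecewise-linear function (4.12)/(2.6) of the subcone containing $v$, already built into the definition of $\mathcal{C}_0$ and into the cited structure theorems, so the order-independence issue you flag is an artifact of your setup rather than something that must be proved. Second, your description of the excluded monomials in (1.1) as ``precisely those on which $v_1$ or $v_2$ vanishes'' is not correct; the exclusions are cut out by the inequalities $\frac{d-1}{c-1}(v_1-a)\le v_2<\frac{d-1}{c-1}v_1+b$ (and their degenerate forms for $c=1$ or $d=1$), and the actual case analysis needed inside the box is the content of the Claim in Lemma 3.1 and the three cases of Lemma 3.2; your sketch would need that analysis spelled out to know the process terminates in $B$ at all.
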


Hence we may give a method to compute the Hodge polygon.
Using Wan's decomposition theorems and diagonal local theory we decide when the Newton polygon of
$L(\bar{F}(\bar{\lambda},x),T)^{-1}$ coincides its lower bound.
Concretely, we prove the following.
\begin{thm}\label{thm9}
If $\gcd(a,d)=1$ and $p\equiv 1 \mod ab[c,d]$, then $NP(\bar{F})$ equals to the Hodge polygon of $\bar{F}$ at $p$.
\end{thm}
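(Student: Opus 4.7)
The polynomial $\bar{F}(\bar{\lambda},x) = x_1^a + x_2^b + \bar{\lambda}/(x_1^c x_2^d)$ has support $\{(a,0),(0,b),(-c,-d)\}$ forming a 2-simplex $\Delta$ with the origin as an interior point, so $\bar{F}$ is a \emph{diagonal} Laurent polynomial in the sense of Wan. Since $NP(\bar{F}) \ge HP(\bar{F})$ always holds by Adolphson--Sperber, the task is to establish the reverse inequality under the two arithmetic hypotheses. The plan is to combine Theorem \ref{thm10} (which makes the Hodge polygon explicit) with Wan's facial decomposition and the diagonal local theory for Gauss sums.

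First, I would read off $HP(\bar{F})$ from Theorem \ref{thm10}: the basis $\bar{B} = \{\Lambda^{m(v)} x_1^{v_1} x_2^{v_2} : x^v \in B\}$ has $\pi$-adic weights that determine the Hodge slopes, so computing the cone weight function $w$ associated with $\Delta$ and summing $w(v_1,v_2)$ over the finite set $B$ described in (\ref{eq02}) gives an explicit combinatorial formula for $HP(\bar{F})$. Next, I would apply Wan's decomposition theorem: the three codimension-one faces of $\Delta$ not through the origin support the binomials $x_1^a+x_2^b$, $x_2^b+\bar{\lambda}/(x_1^c x_2^d)$, and $x_1^a+\bar{\lambda}/(x_1^c x_2^d)$, and the full Newton polygon is bounded above by the sum of their facial Newton polygons as soon as each of these three binomials is ordinary. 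For binomial exponential sums one can evaluate the Frobenius characteristic polynomial explicitly in terms of Gauss sums; Stickelberger's theorem then reduces the $p$-adic valuations to digit-sum computations of residues modulo the ``width'' of each facet.

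The role of the hypothesis $p \equiv 1 \pmod{ab[c,d]}$ is precisely to trivialize these digit sums: the facet joining $(a,0)$ and $(0,b)$ is governed by congruences modulo $ab$, the facet joining $(0,b)$ and $(-c,-d)$ by a divisor of $b[c,d]$, and the facet joining $(-c,-d)$ and $(a,0)$ by a divisor of $a[c,d]$, so all three are simultaneously controlled modulo $ab[c,d]$. The condition $\gcd(a,d)=1$, combined with the gcd hypotheses $\gcd(a,b)=\gcd(a,c)=\gcd(b,c)=\gcd(b,d)=1$ already built into the setup, ensures that the sublattice of $\mathbb{Z}^2$ spanned by any two vertices of $\Delta$ is primitive, so that the diagonal ordinariness criterion applies on each facet without a compensating defect. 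Under these conditions the facial Gauss-sum slopes match the weights $w(v_1,v_2)$ coming from $\bar{B}$, and summing over the three faces reproduces $HP(\bar{F})$.

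The main obstacle is the step matching the sum of facial Gauss-sum valuations with the weight distribution given by Theorem \ref{thm10}: this is a delicate lattice-point bookkeeping, where one must check that every interior lattice point of $\Delta$ contributes through exactly one facet under the given congruence, with the contribution equal to its cone weight. The condition $\gcd(a,d)=1$ enters here to exclude a degeneracy on the facet joining $(a,0)$ to $(-c,-d)$ (the other two facets are handled by the gcd hypotheses from the very definition of $\bar{F}$), and once this combinatorial matching is verified, $NP(\bar{F})=HP(\bar{F})$ follows.
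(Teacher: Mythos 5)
Your skeleton is the same as the paper's: restrict $\bar F$ to the three codimension-one faces of $\Delta$ not containing the origin, show each of the three binomials $x_1^a+x_2^b$, $x_2^b+\bar\lambda x_1^{-c}x_2^{-d}$, $x_1^a+\bar\lambda x_1^{-c}x_2^{-d}$ is ordinary, and conclude by Wan's facial decomposition. But several details are off, and the step you flag as the ``main obstacle'' is not actually part of the argument. First, $\bar F$ itself is \emph{not} diagonal in Wan's sense (three nonconstant terms in two variables); only the facial restrictions are diagonal. Second, Theorem~\ref{thm8} is an equivalence: $\bar F$ is ordinary if and only if every $\bar F^{\Delta_i}$ is ordinary. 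There is no ``Newton polygon bounded above by the sum of facial Newton polygons'' statement to refine, and consequently no lattice-point bookkeeping matching facial Gauss-sum slopes with the weights of the basis $B$ from Theorem~\ref{thm10} is needed: the Hodge polygon plays no further role once facial ordinariness is established. Your proposal leaves that (unnecessary) matching unverified, which would be a gap if it were needed; it is rescued only because the decomposition theorem already gives the global conclusion.

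Third, you do not need to redo Stickelberger digit-sum computations for the facets: that is essentially reproving Proposition~\ref{prop2}. The paper simply checks $|\det\mathbf M(\Delta_1)|=ab$, $|\det\mathbf M(\Delta_2)|=bc$, $|\det\mathbf M(\Delta_3)|=ad$, gets nondegeneracy of each facet from Proposition~\ref{prop1} since $p\nmid abcd$, and applies Proposition~\ref{prop2}: the largest invariant factor of $S(\Delta_i)$ divides $ab$, $bc$, $ad$ respectively, and each of these divides $ab[c,d]$, so $p\equiv 1 \pmod{ab[c,d]}$ gives ordinariness of all three facets at once. Finally, your explanation of the hypotheses via ``primitivity of the sublattice spanned by any two vertices'' is not correct — those sublattices have index $ab$, $bc$, $ad$ in $\mathbb Z^2$, never $1$; what the gcd hypotheses actually control is the invariant-factor structure of the groups $S(\Delta_i)$ entering Proposition~\ref{prop2}. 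With these corrections your argument collapses to the paper's proof.
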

It follows that the Newton polygon can be computed under the condition of Theorem \ref{thm9}.
Specially, when $c=d=1$ and $p\equiv 1 \mod ab$, the slope sequence of $L(\bar{F}(\bar{\lambda},x),T)^{-1}$ is $$\Big\{\frac{ai+bj}{ab}\Big\}_{i=0,...,b,j=0,...,a}.$$

In the point of view of Dwork's deformation theory,
both Dwork \cite{[Dwo74]} and Sperber \cite{Sp1} obtained the deformation equations
identified by Katz as the Picard-Fuchs equations \cite{NK68}.
For $g^{(1)}(\Lambda,x)$, Dwork showed that there is a strong Frobenius structure on the solution space of the deformation equation.
Inspired by this, Dwork \cite{[Dwo742]} has conjectured that Frobenius structures of differential equations exist quite widely.
Actually, Sperber's work \cite{Sp1} adds an evidence for Dwork's conjecture \cite{[Dwo742]}.

The study about the Frobenius structure of Picard-Fuchs differential equation began with Dwork \cite{Dwo69}.
Dwork explored a 4-dimensional family and identified the reciprocal zeros of its zeta function by studying the $p$-adic solutions of Picard-Fuchs equation.
Using the Griffiths-Dwork method \cite{CK99} and
the theory of GKZ system, G$\ddot{\text{a}}$hrs computed the Picard-Fuchs equations for
general one-parameter families of Calabi-Yau varieties \cite{[Gah11]}, \cite{[Gah13]}.
Combining G$\ddot{\text{a}}$hrs' result and the Frobenius structure of Picard-Fuchs equation, Doran et al. obtained the condition what two Calabi-Yau families share a common factor in their zeta functions \cite{DKS}.

Another aim of this paper is to study the Dwork's deformation equation for our family.
Let $D_{\Lambda}=\Lambda\frac{\partial}{\partial\Lambda}+\frac{\Lambda}{x_1^cx_2^d}$ be the connection.
Using the theory of GKZ system, we obtain the specific form of the deformation equation. Furthermore, we have the
following result.
\begin{thm}\label{thm11}
The differential equation satisfied by $\bar{1}:=1+\sum_{l=1}^2D_{l,\Lambda}(\mathcal{C}_0)$ is
 \begin{equation}\label{eq01}
 \Big(\frac{cD_{\Lambda}}{a}-(bc-1) \Big)\cdots \frac{cD_{\Lambda}}{a}  \Big(\frac{dD_{\Lambda}}{b}-(ad-1) \Big)\cdots \frac{dD_{\Lambda}}{b} \Big(D_{\Lambda}-(ab-1) \Big)\cdots D_{\Lambda}= \Lambda^{ab},
\end{equation}
and it is irreducible.
\end{thm}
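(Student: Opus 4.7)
The plan is to apply the $A$-hypergeometric (GKZ) formalism to the family, derive \eqref{eq01} by reducing the GKZ system to a single-variable ODE, and then argue irreducibility through local analysis at the singular points. I would first form the $A$-matrix
$$A=\begin{pmatrix} a & 0 & -c\\ 0 & b & -d\end{pmatrix},$$
whose columns are the exponent vectors of the three monomials appearing in $F(\Lambda,x)$. Identifying $\bar{1}$ with the cohomological avatar of the formal period $\int \exp(\pi F)\,\tfrac{dx_1}{x_1}\tfrac{dx_2}{x_2}$ and using the torus-invariance of the multiplicative Haar measure, one recognises the period as a solution of the GKZ system $M_A(\beta)$ with $\beta=0$. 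Thus $\bar{1}$ is annihilated by the Euler operators
$$Z_1=a\lambda_1\partial_{\lambda_1}-c\lambda_3\partial_{\lambda_3},\qquad Z_2=b\lambda_2\partial_{\lambda_2}-d\lambda_3\partial_{\lambda_3},$$
and by the box operators indexed by $L_A=\mathrm{Ker}(A)\cap\mathbb{Z}^3$. The coprimality conditions $\gcd(a,c)=\gcd(b,d)=\gcd(a,b)=1$ force $L_A$ to be rank-one, generated by $\ell=(bc,ad,ab)$, yielding the unique primitive box operator $\square_{\ell}=\partial_{\lambda_1}^{bc}\partial_{\lambda_2}^{ad}\partial_{\lambda_3}^{ab}-1$.

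Next, I would convert $\square_\ell$ to Euler form: multiplying by $\lambda_1^{bc}\lambda_2^{ad}\lambda_3^{ab}$ and using the identity $\lambda^n\partial_\lambda^n=\prod_{k=0}^{n-1}(\lambda\partial_\lambda-k)$ turns $\square_\ell$ into a polynomial in the Euler derivatives $\theta_l=\lambda_l\partial_{\lambda_l}$. Eliminating $\theta_1$ and $\theta_2$ via $Z_1=Z_2=0$, which with $\beta=0$ gives $\theta_1=\tfrac{c}{a}\theta_3$ and $\theta_2=\tfrac{d}{b}\theta_3$, leaves an ODE purely in $\theta_3$. Specialising $\lambda_1=\lambda_2=1$, $\lambda_3=\Lambda$ and identifying $\theta_3$ with the Dwork connection $D_\Lambda$ acting on $\bar{1}$ (the connection $D_\Lambda$ intertwines with $\Lambda\partial_\Lambda$ on the period side) produces exactly \eqref{eq01}. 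That this is the equation actually satisfied by $\bar{1}$, and not merely by some other solution of the larger GKZ system, can be verified directly against the formal expansion of the period as a series in $\Lambda^{ab}$.

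For irreducibility, the operator $L(D_\Lambda)-\Lambda^{ab}$ of order $N=ab+ad+bc$ is regular singular at $\Lambda=0$ and, after the standard change of variable, at $\Lambda=\infty$. The indicial polynomial at $\Lambda=0$ is $\prod_{i=0}^{bc-1}(cs/a-i)\prod_{j=0}^{ad-1}(ds/b-j)\prod_{k=0}^{ab-1}(s-k)$, whose roots form the multiset
$$E_0=\{k:0\le k<ab\}\cup\{bj/d:0\le j<ad\}\cup\{ai/c:0\le i<bc\}.$$
A hypothetical factorisation $L(D_\Lambda)-\Lambda^{ab}=M_1M_2$ with $0<\mathrm{ord}(M_2)<N$ would partition the multiset $E_0\pmod{\mathbb{Z}}$ into two parts compatible with the principal symbol of $M_2$ and simultaneously impose a matching partition at $\Lambda=\infty$. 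I would then show that the coprimality hypotheses $\gcd(a,b)=\gcd(a,c)=\gcd(b,c)=\gcd(b,d)=1$ force $E_0\pmod{\mathbb{Z}}$ to admit no proper sub-multiset invariant under the monodromy orbit structure at the two singular points, contradicting the existence of $M_2$. The clean combinatorial analysis of this exponent partition, especially in the mixed regime $\max(c,d)>1$, is the main technical obstacle; as a fallback one may appeal to the rigidity of the underlying geometric local system, whose irreducible monodromy would imply the irreducibility of the associated operator.
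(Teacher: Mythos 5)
Your derivation of \eqref{eq01} follows essentially the same route as the paper: the same matrix $A$ with columns $(a,0)^t,(0,b)^t,(-c,-d)^t$, the same rank-one lattice generated by $(bc,ad,ab)$, conversion of the box operator to Euler form via $\lambda^n\partial_\lambda^n=\prod_{k=0}^{n-1}(\theta-k)$, and elimination of $\theta_1,\theta_2$ through the Euler relations $a\theta_1=c\theta_3$, $b\theta_2=d\theta_3$. The one place where you substitute a heuristic for a proof is the identification of $\bar{1}$ with a GKZ solution: the paper does not invoke a period integral but checks directly (its Lemma 4.1) that $\bar{1}$ satisfies the twisted box and Euler relations modulo $\sum_{l=1}^2 D_{l,\Lambda}(\mathcal{C}_0)$, using the operators $D'_\Lambda=\partial_\Lambda+x_1^{-c}x_2^{-d}$, $D'_1=\partial_{v_1}+x_1^a$, $D'_2=\partial_{v_2}+x_2^b$, which commute with $D_{1,\Lambda},D_{2,\Lambda}$. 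Your ``verify against the formal expansion of the period'' should be replaced by this finite cohomological computation; as written it is not a proof.

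The genuine gap is the second half of the theorem. The paper reduces it to a concrete statement (its Lemma 4.2): if $D_\Lambda^n(1)$ lies in the span of $1,D_\Lambda(1),\dots,D_\Lambda^{n-1}(1)$ modulo $\sum_{l=1}^2 D_{l,\Lambda}(\mathcal{C}_0)$, then $n\ge ab+bc+ad=N$; this is proved by an explicit reduction of $\Lambda^n x_1^{-nc}x_2^{-nd}$ using $D_{1,\Lambda},D_{2,\Lambda}$ together with the coprimality hypotheses, and it shows that \eqref{eq01} is the minimal-order equation satisfied by $\bar{1}$, so that $\{\bar{1},\dots,D_\Lambda^{N-1}(\bar{1})\}$ is a basis of $H^2$. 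Your proposal contains no substitute for this step: the exponent-partition argument is explicitly left as ``the main technical obstacle,'' and the fallback to rigidity of the geometric local system is asserted, not established. Moreover, the local analysis as you set it up is flawed: the operator $P(D_\Lambda)-\Lambda^{ab}$, with $\deg P=N>ab$ and constant right-hand side, is of confluent (Bessel--Kloosterman) type, so $\Lambda=\infty$ is an \emph{irregular} singularity of slope $ab/N$, and no change of variable (including ramified pullbacks) renders it regular singular; hence there is no second set of regular-singular exponents to partition, and any factorization argument at $\infty$ would have to work with the formal decomposition into exponential parts instead. As it stands, the minimality/irreducibility claim is unproven in your proposal.
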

Let $N:=ad+ab+bc=\dim H^2(\Omega^{\bullet}(\mathcal{C}_0,\nabla(D^{(\Lambda)})))$. Then $\{\bar{1},D_{\Lambda}(\bar{1}),\cdots, (D_{\Lambda})^{N-1}(\bar{1})\}$ consists a basis of
the top dimensional space
$H^2(\Omega^{\bullet}(\mathcal{C}_0,\nabla(D^{(\Lambda)})))$. The transpose of $G$, denoted by $G^t$, is the connection matrix
of $D_{\Lambda}$ on the basis $\{\bar{1},D_{\Lambda}(\bar{1}),\cdots, (D_{\Lambda})^{N-1}(\bar{1})\}$.
Clearly, the matrix $G^t$ is totally determined by (\ref{eq01}).
Let $\Omega$ be an algebraically closed field, and complete under valuation extending that of $\mathbb{Q}_p$.
We establish the strong Frobenius structures of the deformation equations using Dwork's dual theory
and deformation theory.
Our work adds some new evidences for Dwork's conjecture \cite{[Dwo742]}.
Equivalently, we prove the following result.
\begin{thm}\label{thm7}
Let $a\in \Omega$. If $y$ is a solution of
\begin{equation*}
\Lambda\frac{\partial}{\partial \Lambda}(C_0(\Lambda),C_1(\Lambda),\cdots,C_{N-1}(\Lambda))=(C_0(\Lambda),C_1(\Lambda),\cdots,C_{N-1}(\Lambda))G
\end{equation*}
near $a$, then so is $y^{\sigma\phi}U(\Lambda)$, where $U(\Lambda)$ is the matrix of Frobenius map with respect to the basis $\{\bar{1}, D_{\Lambda}(\bar{1}),\cdots, D_{\Lambda}^{N-1}(\bar{1})\}$.
\end{thm}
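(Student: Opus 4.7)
The plan is to realize Theorem~\ref{thm7} as the matrix translation of the horizontality of Dwork's chain-level Frobenius $\alpha$ on $\mathcal{C}_0$ with respect to the deformation connection $D_\Lambda$, and then promote the resulting formal identity to an analytic one via Dwork's dual theory. I would proceed in four steps: construct $\alpha$, verify its compatibility with $D_\Lambda$ at the chain level, descend to cohomology in the basis of Theorem~\ref{thm11}, and read off the matrix statement.

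First, following the usual Dwork recipe built from the splitting function $\theta$ fixed earlier in the paper, define on $\mathcal{C}_0$ the composite
\[
\alpha \;=\; \psi_{q}\;\circ\;\prod_{i=0}^{a-1}\theta\!\bigl(\pi F(\Lambda^{p^{i}},x^{p^{i}})\bigr)^{\sigma^{i}},
\]
where $\psi_{q}$ is the $q$-contraction sending $x^{u}\mapsto x^{u/q}$ (extended by zero off $q\mathbb{Z}^{2}$). Using $p>2$ and the growth estimates on $\theta$ that were built into the Haessig--Sperber framework, one checks that $\alpha$ is completely continuous, preserves the complex $\Omega^{\bullet}(\mathcal{C}_0,\nabla(D^{(\Lambda)}))$ up to explicit exact pieces, and therefore induces an endomorphism $\bar\alpha$ on $H^2(\Omega^{\bullet}(\mathcal{C}_0,\nabla(D^{(\Lambda)})))$.

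The key chain-level identity I would then establish is the horizontality
\[
D_{\Lambda}\circ\alpha \;=\; q\,\alpha\circ D_{\Lambda}^{\sigma\phi},
\]
where $\phi:\Lambda\mapsto\Lambda^{q}$ and $\sigma$ is the coefficient Frobenius. It follows by a direct computation from $\Lambda\,\partial_{\Lambda}\theta(\pi F^{(0)})=(\pi\Lambda/x_{1}^{c}x_{2}^{d})\,\theta(\pi F^{(0)})$, the telescoping of the $\sigma^{i}$-twisted product, and the factor $q$ produced by the chain rule for $\Lambda\mapsto\Lambda^{q}$. Descending to $H^{2}$ and expressing $\bar\alpha$ in the basis $\mathbf{e}(\Lambda)=(\bar 1,D_{\Lambda}\bar 1,\ldots,D_{\Lambda}^{N-1}\bar 1)$ supplied by Theorem~\ref{thm11} as a matrix $U(\Lambda)$, horizontality becomes a matrix identity relating $\Lambda\,\partial_{\Lambda}U$, $U$, $G$ and $G^{\sigma\phi}$; a short manipulation of this identity shows that $y^{\sigma\phi}U$ solves $\Lambda\,\partial_{\Lambda}(\cdot)=(\cdot)G$ whenever $y$ does, which is exactly the content of Theorem~\ref{thm7}.

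The main obstacle is the \emph{strong} aspect of the Frobenius structure: one must show that the entries of $U(\Lambda)$ are analytic, not merely formal, on an overconvergent disk around each generic $a\in\Omega$, with poles confined to the singular locus of (\ref{eq01}), namely $\Lambda=0$, $\infty$ and the $ab$-th roots of unity. This is where Dwork's dual theory enters: realizing $U$ through a pairing with a dual basis built from adjoint operators on the dual space, one transports the overconvergence of $\theta$ into overconvergence of $U$, while the regularity of (\ref{eq01}) forces its singularities to lie in the indicial locus of the connection. Once this analyticity of $U$ is in place, horizontality propagates it to an honest Frobenius action on the solution space near $a$, yielding the strong Frobenius structure asserted in the theorem.
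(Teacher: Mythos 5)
Your route is genuinely different from the paper's. You propose to prove horizontality of the chain-level Frobenius, descend to $H^2$ in the basis $\{\bar 1,D_\Lambda(\bar 1),\dots,D_\Lambda^{N-1}(\bar 1)\}$, and extract a differential identity for $U(\Lambda)$ from which the theorem follows by differentiating $y^{\sigma\varphi}U$. The paper never differentiates $U$: working in the dual theory, it characterizes solution vectors $(C_0,\dots,C_{N-1})$ of (\ref{eq18}) as the coordinates of $T_{z,\Lambda}$-transports of $\Lambda$-independent dual vectors (via $\epsilon^*$ and diagram (\ref{eq03})), and then the single commutation $\alpha_\Lambda^{*}\circ T_{\sigma z,\Lambda^p}=T_{z,\Lambda}\circ\alpha_z^{*}\circ T_{\sigma z,z^p}$ from diagram (\ref{eq19}) shows immediately that $C^{\sigma\varphi}U(\Lambda)$ is again such a transport. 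Your approach, if completed, buys an explicit matrix identity for $U$; the paper's is softer and avoids justifying term-by-term differentiation of the Frobenius matrix.

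As written, however, your sketch has concrete gaps. First, the key commutation law is mis-stated relative to your own definition of $\alpha$: with $\alpha=\psi_q\circ\mathfrak F_{\tilde a}(\Lambda,x)=\frac{1}{\exp F^{(0)}(\Lambda^q,x)}\circ\psi_q\circ\exp F^{(0)}(\Lambda,x)$, conjugation gives $\bigl(\Lambda\frac{\partial}{\partial\Lambda}+q\pi\Lambda^q x^{\mu}\bigr)\circ\alpha=\alpha\circ D_\Lambda$, i.e.\ the twisted ($\Lambda^q$) and $q$-scaled operator sits on the \emph{target} side; your formula $D_\Lambda\circ\alpha=q\,\alpha\circ D_\Lambda^{\sigma\phi}$ puts the twist on the source and is false as stated (it is the mirror image of the correct relation, which is the $\Lambda$-analogue of (\ref{eq21})). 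Second, the ``short manipulation'' is the entire content of the theorem and must be carried out: one needs the matrix identity $\Lambda\frac{\partial}{\partial\Lambda}U=UG-pG^{\sigma\varphi}U$ (in the paper's normalization $\varphi:\Lambda\mapsto\Lambda^p$), after which $\Lambda\frac{\partial}{\partial\Lambda}(y^{\sigma\varphi}U)=p\,y^{\sigma\varphi}G^{\sigma\varphi}U+y^{\sigma\varphi}\Lambda\frac{\partial}{\partial\Lambda}U=y^{\sigma\varphi}UG$; neither the identity nor this computation appears in your sketch. Third, the Frobenius kernel is written incorrectly: $\theta(t)=\exp\pi(t-t^p)$ already contains $\pi$ and is applied to each Teichm\"uller-lifted monomial, so the factor is $\mathfrak F(\Lambda,x)=\theta(\Lambda x^{\mu})\theta(x_1^a)\theta(x_2^b)$, not $\theta(\pi F)$, and the product runs over $i=0,\dots,\tilde a-1$ (your upper limit clashes with the exponent $a$). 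Finally, the closing analyticity paragraph both overshoots what the statement needs and asserts an unsupported description of the singular locus: the entries of $U(\Lambda)$ lie in $\mathcal O_0$ by construction and the locality ``near $a$'' is handled by the transport $T_{z,\lambda}$ for nearby points, while the leading coefficient of (\ref{eq01}) as an operator in $\frac{\partial}{\partial\Lambda}$ is a nonzero constant times $\Lambda^N$, so the finite singularities are at $\Lambda=0$ only and the claimed $ab$-th roots of unity play no role.
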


This paper is organized as follow. In section 2, we construct some
relative $p$-adic cohomologies
by following Haessig and Sperber.
In section 3, we study the Newton polygon of the $L$-function $L(\bar{F}(\bar{\lambda},x),T)^{-1}$. For this purpose,
we explore the specific forms of bases of top cohomology spaces and review Wan's decomposition theorems.
In section 4, we compute Dwork's deformation equation for our family and establish the strong Frobenius structure of this equation.

\section{Cohomological setup}
Recall that
$$\bar{F}(\Lambda,x):=x_1^a+x_2^b+\frac{\Lambda}{x_1^cx_2^d}\in \mathbb{F}_q[\Lambda][x_1^{\pm},x_2^{\pm }] $$ with $\bar{f}(x):=x_1^a+x_2^b$,
$\mu:=\frac{1}{x_1^cx_2^d}$ and
parameter $\Lambda$. The support of $\bar{f}$, denoted by ${\it Supp} (\bar{f})$,
is defined as
$${\rm Supp} (\bar{f}):=\{(a,0),(0,b)\}.$$
Let $\sigma:=\Delta (\bar{f})$ be the convex closure of ${\rm Supp} (\bar{f})$,
and let $\Delta_{\infty} (\bar{f})$ be the convex closure of $\Delta (\bar{f})\cup \{(0,0)\}$.
Let $Cone (\bar{f})$ be the union of rays from 0 passing through $\Delta_{\infty} (\bar{f})$.
Set $M(\bar{f}):=Cone (\bar{f})\cap \mathbb{Z}^2$. Then
$\dim \Delta_{\infty} (\bar{f})=\dim Cone (\bar{f})=2$.
The two points of ${\rm Supp} (\bar{f})$ determine the affine line
$$l_{\sigma}(v):=\Big\{(v_1,v_2) | \frac{v_1}{a}+\frac{v_2}{b}=1\Big\}.$$
Clearly, $\mu=(-c,-d)\notin M(\bar{f})$ and $l_{\sigma}(\mu)<1$.
Note that $p\nmid cd$ and $\bar{f}$ is nondegenerate with respect to $\Delta_{\infty}(\bar{f})$. It follows from Theorem 2.1 of \cite{HS17}
that for each $\bar{\lambda}\in \bar{\mathbb{F}}_q^{*}$,
$\bar{F}(\bar{\lambda},x)$ is nondegenerate with respect to $\Delta_{\infty}(\bar{f},\mu)$,
where $\Delta_{\infty}(\bar{f},\mu)$ is the convex closure of $\Delta_{\infty}(\bar{f})\cup \{\mu\}$.

Let $Cone(\bar{f},\mu)$ be the cone in $\mathbb{R}^2$ over $\Delta_{\infty}(\bar{f},\mu)$ and let
$M(\bar{f},\mu):=Cone(\bar{f},\mu)\cap \mathbb{Z}^2$. Notably, $\Delta_{\infty}(\bar{f},\mu)$ is the triangle with vertices
$(a,0),(0,b)$ and $\mu$, and $M(\bar{f},\mu)=\mathbb{Z}^2$. If $\tau$ is a closed face of $\Delta_{\infty}(\bar{f},\mu)$ not containing 0, then we say $\tau$ is {\it a face at $\infty$}. The three sides of the triangle are three faces of codimension one at $\infty$ of $\Delta_{\infty}(\bar{f},\mu)$.

For $\bar{\lambda}\in \bar{\mathbb{F}}_q^{*}$, we let $\mathbb{F}_q^{(\bar{\lambda})}:=\mathbb{F}_q(\bar{\lambda})$
denote the field generated over $\mathbb{F}_q$ by $\bar{\lambda}$. For $v\in M(\bar{f},\mu)$, let $w(v)$
denote the smallest nonnegative rational number $\bar{e}$ such that $v\in \bar{e}\cdot \Delta_{\infty}(\bar{f},\mu)$.
There is a positive integer $e$ such that $w(M(\bar{f},\mu))\subset (1/e)\mathbb{Z}_{\ge 0}$.
Let $R^{(\bar{\lambda})}:=\mathbb{F}_q^{(\bar{\lambda})}[M(\bar{f},\mu)]$.
Then $R^{(\bar{\lambda})}$ has an increasing filtration indexed by $(1/e)\mathbb{Z}_{\ge 0}$.
Let $\bar{R}^{(\bar{\lambda})}$ denote the associated graded ring.
Thus we construct two complexes as follows.
The spaces in both complexes are the same
$$\Omega^{i}(\bar{R}^{(\bar{\lambda})},\nabla(\bar{F}(\bar{\lambda},x))):=\Omega^{i}(\bar{R}^{(\bar{\lambda})},\nabla(D^{(\bar{\lambda})})):=\bigoplus_{1\le j_1<j_i\le 2} \bar{R}^{(\bar{\lambda})} \frac{dx_{j_1}}{x_{j_1}}\wedge \frac{dx_{j_i}}{x_{j_i}}$$
with respective boundary operators given by
$$\nabla(\bar{F}(\bar{\lambda},x)\Big( \zeta \frac{dx_{j_1}}{x_{j_1}}\wedge \frac{dx_{j_i}}{x_{j_i}}\Big):=\Big(\sum_{l=1}^nx_l\frac{ \partial \bar{F}(\bar{\lambda},x)}{\partial x_l}\zeta \frac{dx_{l}}{x_{l}}\Big)\wedge\frac{dx_{j_1}}{x_{j_1}}\wedge \frac{dx_{j_i}}{x_{j_i}}$$
and
$$\nabla(D^{(\bar{\lambda})})\Big( \zeta \frac{dx_{j_1}}{x_{j_1}}\wedge \frac{dx_{j_i}}{x_{j_i}}\Big):=\Big(\sum_{l=1}^n(D_{l}^{(\bar{\lambda})}\zeta ) \frac{dx_{l}}{x_{l}}\Big)\wedge\frac{dx_{j_1}}{x_{j_1}}\wedge \frac{dx_{j_i}}{x_{j_i}},$$
where $$D_l^{(\bar{\lambda})}:=x_l\frac{\partial}{\partial x_l}+x_l\frac{ \partial \bar{F}(\bar{\lambda},x)}{\partial x_l},\ l=1,2. $$

Since $\bar{F}(\bar{\lambda},x)$ is nondegenerate with respect to $\Delta_{\infty}(\bar{f},\mu)$, we have
\begin{thm}[Theorem 2.2, \cite{HS17}]\label{thm0}
For every choice $\bar{\lambda}\in \bar{\mathbb{F}}_q^{*}$, complexes $\Omega^{\bullet}(\bar{R}^{(\bar{\lambda})},\nabla(\bar{F}(\bar{\lambda},x)))$
and $\Omega^{\bullet}(\bar{R}^{(\bar{\lambda})}, \nabla(D^{(\bar{\lambda})}))$
are acyclic except in the top dimension 2. The top dimensional cohomology $H^2$ is a finite free $\mathbb{F}_q^{(\bar{\lambda})}$-algebra
of rank $2!Vol(\Delta_{\infty}(\bar{f},\mu))$.
For each $i\in (1/e)\mathbb{Z}_{\ge 0}$, we may choose a monomial basis $B_i^{(\bar{\lambda})}$
consisting of monomials of weight $i$ for an $\mathbb{F}_q^{(\bar{\lambda})}$-vector space $V_i^{(\bar{\lambda})}$
such that the $i$-th graded piece $\bar{R}_i^{(\bar{\lambda})}$ of $\bar{R}^{(\bar{\lambda})}$ may be written as
$$\bar{R}_i^{(\bar{\lambda})}=V_i^{(\bar{\lambda})}\oplus \sum_{l=1}^2x_l\frac{\partial \bar{F}(\bar{\lambda},x)}{\partial x_l}\bar{R}_{i-1}^{(\bar{\lambda})}$$
so that if $B^{(\bar{\lambda})}=\cup_{i\in (1/e)\mathbb{Z}_{\ge 0}}B_i^{(\bar{\lambda})}$
and $V^{(\bar{\lambda})}=\sum_{i\in (1/e)\mathbb{Z}_{\ge 0}}V_i^{(\bar{\lambda})}$ is the $\mathbb{F}_q^{(\bar{\lambda})}$-vector space with
basis $B^{(\bar{\lambda})}$,
then
$$\bar{R}^{(\bar{\lambda})}=V^{(\bar{\lambda})}\oplus \sum_{l=1}^2x_l \frac{\partial \bar{F}(\bar{\lambda},x)}{\partial x_l}\bar{R}^{(\bar{\lambda})} $$
and
 $$\bar{R}^{(\bar{\lambda})}=V^{(\bar{\lambda})}\oplus \sum_{l=1}^2D_l^{(\bar{\lambda})}\bar{R}^{(\bar{\lambda})}.  $$
\end{thm}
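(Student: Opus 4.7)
My plan is to recognize $\Omega^\bullet(\bar R^{(\bar\lambda)}, \nabla(\bar F(\bar\lambda,x)))$ as the Koszul complex on $\bar R^{(\bar\lambda)}=\mathbb{F}_q^{(\bar\lambda)}[M(\bar f,\mu)]$ attached to the pair of logarithmic derivatives $g_l:=x_l\,\partial\bar F(\bar\lambda,x)/\partial x_l$, $l=1,2$, prove acyclicity in that form, and then transfer the conclusion to $\nabla(D^{(\bar\lambda)})$ by a weight-filtration argument. Concretely, I would carry out five steps: (i) show that $(g_1,g_2)$ is a regular sequence in $\bar R^{(\bar\lambda)}$; (ii) deduce that the associated Koszul complex is acyclic outside top degree; (iii) identify the dimension of $H^2$ with a lattice volume; (iv) construct the graded monomial basis $B^{(\bar\lambda)}$ weight by weight; (v) pass from $\nabla(\bar F)$ to $\nabla(D^{(\bar\lambda)})$ via a spectral sequence.

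For step (i) the essential input is the nondegeneracy of $\bar F(\bar\lambda,x)$ with respect to $\Delta_{\infty}(\bar f,\mu)$ recorded in the excerpt: for each face $\tau$ at infinity of $\Delta_{\infty}(\bar f,\mu)$ the equations $g_l|_\tau=0$, $l=1,2$, have no common solution in the torus $(\bar{\mathbb{F}}_q^*)^2$. Stratifying $M(\bar f,\mu)=\mathbb{Z}^2$ by the cones over the three codimension-one faces at infinity and invoking the toric vanishing arguments of Adolphson and Sperber \cite{AS89} on each stratum yields regularity of $(g_1,g_2)$. Step (ii) is then standard Koszul theory, and step (iii) follows from the Kouchnirenko--Bernstein theorem, giving $\dim_{\mathbb{F}_q^{(\bar\lambda)}}\bar R^{(\bar\lambda)}/(g_1,g_2)=2!\,Vol(\Delta_\infty(\bar f,\mu))$. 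Step (iv) is constructive: for each weight $i\in(1/e)\mathbb{Z}_{\ge 0}$, pick monomials of weight $i$ whose classes form a basis of $\bar R^{(\bar\lambda)}_i/\sum_{l=1}^2 g_l \bar R^{(\bar\lambda)}_{i-1}$; regularity ensures that the union $B^{(\bar\lambda)}=\bigcup_i B_i^{(\bar\lambda)}$ is a basis of the quotient and that the asserted direct-sum decompositions hold.

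For step (v), equip $\Omega^\bullet(\bar R^{(\bar\lambda)},\nabla(D^{(\bar\lambda)}))$ with the weight filtration. Because $x_l\partial/\partial x_l$ acts on a monomial $x^v$ as the scalar $v_l$ and therefore preserves weight, while multiplication by $g_l$ strictly raises weight, the associated graded of $D_l^{(\bar\lambda)}$ is exactly multiplication by $g_l$. The associated graded complex is then $\Omega^\bullet(\bar R^{(\bar\lambda)},\nabla(\bar F(\bar\lambda,x)))$, so the spectral sequence of the filtered complex degenerates at $E_1$, transferring both acyclicity in degrees $<2$ and the basis statement. The main obstacle is step (i): because $\bar R^{(\bar\lambda)}$ is the semigroup ring of a two-dimensional rational cone rather than a finitely generated polytope algebra, one cannot directly invoke Cohen--Macaulayness of affine toric varieties. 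The regularity has to be built by hand from the face-by-face nondegeneracy, using the pairwise coprimality of $a,b,c,d$ together with $p\nmid abcd$ to secure torus-emptiness on each facial stratum, and this is where the arithmetic hypotheses of the paper really enter.
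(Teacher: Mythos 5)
Your outline is essentially the argument behind the cited result: the paper itself gives no independent proof of this statement but simply invokes Theorem 2.2 of \cite{HS17}, whose proof is exactly the Adolphson--Sperber machinery you describe (nondegeneracy on the faces at infinity gives a regular sequence $(g_1,g_2)$, Kouchnirenko's volume formula gives the rank $2!\,Vol(\Delta_\infty(\bar f,\mu))$, and the weight filtration with associated graded operator ``multiplication by $g_l$'' transfers acyclicity and the monomial basis from $\nabla(\bar F)$ to $\nabla(D^{(\bar\lambda)})$). Your closing worry about Cohen--Macaulayness is overstated --- here $M(\bar f,\mu)=\mathbb{Z}^2$, so $\bar R^{(\bar\lambda)}$ is just a Laurent polynomial ring with a nonstandard grading (and in general normal affine semigroup rings are Cohen--Macaulay by Hochster) --- but the face-by-face treatment you propose is in any case the route taken in \cite{AS89} and \cite{HS17}.
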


{\bf Remark.} As showed in \cite{HS17}, the basis $B^{(\bar{\lambda})}$ is actually independent of the choice of $\bar{\lambda}\in\bar{\mathbb{F}}_q^{*}$.



Now we express the specific form of the weight function $w$ on the closed subcones of $Cone(\bar{f},\mu)$ corresponding to the codimension one faces
$\omega$ of $\Delta_{\infty}(\bar{f},\mu)$ at $\infty$.
If $\omega=\sigma=\Delta(\bar{f})$ and $v\in 
M(\bar{f})$,
then let
\begin{equation}\label{eq1}
w(v):=l_{\sigma}(v).
\end{equation}
For $\tau\in {\rm Supp}(\bar{f})$, let $C(\tau,\mu)$ denote the segment connecting $\tau$ and $\mu$, and $Cone(\tau,\mu)$ denote the set of rays from origin passing through $C(\tau,\mu)$.
If $\tau=(a,0)$, $\omega=C(\tau,\mu)$ and $v\in M(\tau,\mu):=Cone(\tau,\mu)\cap \mathbb{Z}^2 $,
then define
\begin{equation}\label{eq11}
w(v):= -\frac{(a+c)v_2}{ad}+\frac{v_1}{a}.
\end{equation}
If $\tau=(0,b)$, $\omega=C(\tau,\mu)$ and $v\in M(\tau,\mu)$,
then define
\begin{equation}\label{eq111}
w(v):= -\frac{(b+d)v_1}{bc}+\frac{v_2}{b}.
\end{equation}

\subsection{Total space}
To construct relative cohomology, we view $\Lambda$ as a variable. Hence $\bar{F}(\Lambda,x)$
is viewed as a Laurent polynomial in $3$ variables.
For convenience, we denote the ordering of coordinates in $\mathbb{R}^{3}$ associated to the monomial
$\Lambda^rx^v$ to be the point $(r;v_1,v_2)\in \mathbb{R}^{3}$. Then
$${\rm Supp}(\bar{F})=\{ (1;\mu)\}\cup \{ (0;v)|v\in {\rm Supp}(\bar{f})\}.$$
Since $\bar{F}$ is quasi-homogeneous, all elements of ${\rm Supp}(\bar{F})$
lies on the affine hyperplane $W(r;v)=1$,
where
\begin{equation}\label{eq25}
W(r;v):=l_{\sigma}(v)+r(1-l_{\sigma}(\mu))=\frac{v_1}{a}+\frac{v_2}{b}+r(1+\frac{c}{a}+\frac{d}{b}).
\end{equation}
Define the weight of $\Lambda^rx^v$ by $W(r;v)$.

We now give the explicit form of the $m$-function in Theorem \ref{thm10}, let
\begin{align}\label{eq12}
&m(v):=\left\{
  \begin{array}{ll}
  -\frac{v_1}{c},&  \hbox{for $v\in Cone(\tau,\mu)$ and $\tau=(0,b)$};\\
    -\frac{v_2}{d},& \hbox{for $v\in Cone(\tau,\mu)$ and $\tau=(a,0)$};\\
  0,&   \hbox{for $v\in Cone(\bar{f})$}.
  \end{array}
\right.
\end{align}
The $m$-function has the following property.
\begin{lem}\label{lem3}\cite{HS17} For any $u,v\in M(\bar{f},\mu)$, one has that
$$m(u+v)\le m(u)+m(v).$$
\end{lem}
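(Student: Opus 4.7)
The plan is to recognize $m$ as the pointwise maximum of three linear functionals on $\mathbb{R}^2$, from which subadditivity is automatic. Because the triangle $\Delta_{\infty}(\bar{f},\mu)$ has vertices $(a,0),(0,b),(-c,-d)$ with the origin in its interior, the ambient cone $Cone(\bar{f},\mu)$ is all of $\mathbb{R}^2$, and $m$ is a continuous, positively homogeneous of degree one, piecewise linear function whose linear pieces are prescribed on the three sub-cones $Cone(\bar{f})$, $Cone((a,0),\mu)$, and $Cone((0,b),\mu)$.

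First I would establish the identity
\[
 m(v) \;=\; \max\Bigl(0,\ -\tfrac{v_1}{c},\ -\tfrac{v_2}{d}\Bigr) \qquad\text{for every } v=(v_1,v_2)\in \mathbb{R}^2,
\]
by a short case analysis on which sub-cone contains $v$. On $Cone(\bar{f})$, where $v_1,v_2\ge 0$, both $-v_1/c$ and $-v_2/d$ are non-positive, so the maximum equals $0=m(v)$. On $Cone((a,0),\mu)$, parametrize $v=s(a,0)+t(-c,-d)$ with $s,t\ge 0$; then $-v_2/d=t$, while $-v_1/c = t-sa/c \le t$ and $0\le t$, so the maximum equals $t=-v_2/d=m(v)$. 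The case $v\in Cone((0,b),\mu)$ is entirely symmetric by interchanging the roles of the two coordinates.

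With the identity in hand, subadditivity is immediate: for each of the three linear functionals $\ell\in\{0,\,-v_1/c,\,-v_2/d\}$ one has
\[
 \ell(u+v) \;=\; \ell(u)+\ell(v) \;\le\; m(u)+m(v),
\]
and taking the maximum over $\ell$ on the left yields $m(u+v)\le m(u)+m(v)$. The only real work is the case analysis in the first step, and in each case it reduces to a single inequality that follows at once from the non-negativity of the cone parameters $s,t$; no hypotheses beyond $a,b,c,d>0$ are needed, which is consistent with the fact that the lemma is stated for the purely combinatorial function $m$.
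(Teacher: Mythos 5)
Your proof is correct. Note that the paper itself does not prove this lemma at all: it is quoted from Haessig--Sperber \cite{HS17}, where the $m$-function is introduced in a more general setting and its subadditivity is established there as part of the general machinery. Your argument is a clean, self-contained verification tailored to this specific family: the identity
$$m(v)=\max\Bigl(0,\,-\tfrac{v_1}{c},\,-\tfrac{v_2}{d}\Bigr)$$
does hold on all of $\mathbb{R}^2$ (your three cone computations are right, the cones cover the plane because the origin lies in the interior of the triangle with vertices $(a,0)$, $(0,b)$, $(-c,-d)$, and the formulas agree on the overlapping boundary rays), and once $m$ is exhibited as a pointwise maximum of linear functionals, subadditivity is immediate exactly as you say. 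What your route buys is an elementary, two-line verification that avoids invoking the general framework of \cite{HS17}; what it gives up is generality, since the max-of-three-linear-forms description exploits the very simple combinatorics of this particular triangle, whereas the cited result covers arbitrary quasi-homogeneous $\bar{f}$ with a deforming monomial.
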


Define the set $\tilde{M}(\bar{F})$ to be
\begin{equation}\label{eq4}
\tilde{M}(\bar{F}):=\{ (r;v)\in (1/D)\mathbb Z_{\ge 0} \times \mathbb{Z}^2| v\in Cone (\bar{f},\mu),r\ge m(v)\},
\end{equation}
where $D=cd$. Let $W$ be the total weight function on $\tilde{M}(\bar{F})$.
Let $\tilde{e}$ be the smallest positive integer, divisible by $D$, such that $W(\tilde{M}(\bar{F}))\subset (1/\tilde{e})\mathbb{Z}_{\ge 0}$.
 Let $T:=\mathbb{F}_q[\tilde{M}(\bar{F})]$ be the graded $\mathbb{F}_q$-algebra with its grading given by the total
 weight function $W$
and indexed by $(1/\tilde{e})\mathbb{Z}_{\ge 0}$. We conclude from (\ref{eq4}) that $T:=\mathbb{F}_q[\tilde{M}(\bar{F})]$ is a free $S:=\mathbb{F}_q[\Lambda^{1/D}]$-algebra
with basis $\{\Lambda^{m(v)}x^v\}_{v\in M(\bar{f},\mu)}$.
It follows from (\ref{eq1}), (\ref{eq11}), (\ref{eq111}), (\ref{eq25}) and (\ref{eq12}) that
\begin{align*}
W(m(v);v)=w(v).
\end{align*}
Let $W_{\Lambda}(\Lambda^r):=r(1-l_{\sigma}(\mu))=r(1+\frac{c}{a}+\frac{d}{b})$.
Then $W(\Lambda^rx^v)=w(v)+W_{\Lambda}(\Lambda^{r-m(v)})$.

Now we construct a complex of $S$-algebras $\Omega^{\bullet}(T,\nabla(D))$ as \cite{HS17}.
Let
$$\Omega^i:= \bigoplus_{1\le j_1<j_i\le 2}T\frac{dx_{j_1}}{x_{j_1}}\wedge\frac{dx_{j_i}}{x_{j_i}},$$
with boundary operator defined by
$$ \nabla (D)\Big(\zeta \frac{dx_{j_1}}{x_{j_1}}\wedge \frac{dx_{j_i}}{x_{j_i}}\Big):= \Big(\sum_{l=1}^2 D_l(\Lambda,x)\zeta \frac{dx_l}{x_l}\Big)\wedge\frac{dx_{j_1}}{x_{j_1}}\wedge\frac{dx_{j_i}}{x_{j_i}},$$
where
$$D_l(\Lambda,x):=x_l\frac{\partial}{\partial x_l}+x_l\frac{\partial \bar{F}(\Lambda,x)}{\partial x_l},\ l=1,2.$$
For our family, we compute
$$2!Vol \Delta_{\infty}(\bar{f},\mu)=ad+ab+bc=N.$$
The following result is a special case of Haessig and Sperber's theorem.
\begin{thm}\label{thm1}(Theorem 2.5,\cite{HS17})
The complex $\Omega^{\bullet}(T,\nabla(D))$ of $S$-algebras is acyclic except in top dimension 2. $H^2(\Omega^{\bullet}(T,\nabla(D)))$
is a free filtered $S$-algebra of finite rank $N$.
If we fix $\bar{\lambda}\in \mathbb{F}_q^{*}$ (for simplicity)
and $\tilde{V}$ the free $S$-module with basis $\bar{\bar{B}}:=\{\Lambda^{m(v)}x^v: v\in B^{(\bar{\lambda})}\}$, then
$$T=\tilde{V} \oplus \sum _{l=1}^2D_l T.$$
\end{thm}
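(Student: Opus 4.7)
The complex $\Omega^{\bullet}(T,\nabla(D))$ is nothing but the $S$-linear Koszul complex attached to the commuting operators $D_1, D_2$ acting on $T$, so the theorem splits into two claims: (i) $(D_1, D_2)$ is a regular $S$-linear sequence on $T$, which gives acyclicity in dimensions $0$ and $1$, and (ii) the cokernel $T/(D_1 T + D_2 T)$ equals $\tilde V$ as a free $S$-module on the basis $\bar{\bar{B}}$, of cardinality $N = 2!\,\mathrm{Vol}(\Delta_{\infty}(\bar f, \mu))$. Both assertions are the relative (``with parameter'') version of Theorem \ref{thm0}, and my plan is to deduce them from that result by means of the total weight filtration $W$.

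Write $D_l = E_l + M_l$, where $E_l := x_l\,\partial/\partial x_l$ preserves $W$ and $M_l := x_l\,\partial \bar F(\Lambda,x)/\partial x_l$ strictly raises it by $1$. Thus each $D_l$ is a filtered operator whose associated graded is multiplication by $M_l$. Since the basis $B^{(\bar\lambda)}$ furnished by Theorem \ref{thm0} is independent of $\bar\lambda\in\bar{\mathbb F}_q^{*}$, the associated graded $\mathrm{gr}^{W} T$ is identified, as a graded $S$-algebra, with $\bar R^{(\bar\lambda)} \otimes_{\mathbb F_q} S$ in such a way that each $M_l$ corresponds to the multiplication operator $x_l\,\partial \bar F(\bar\lambda,x)/\partial x_l$ appearing in Theorem \ref{thm0}.

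Applying Theorem \ref{thm0} graded piece by graded piece therefore yields both the decomposition $\mathrm{gr}^{W} T = \tilde V \oplus \sum_{l=1}^{2} M_l\,\mathrm{gr}^{W} T$ and the Koszul acyclicity of $\{M_1, M_2\}$ in degrees $0$ and $1$. I would then lift these statements to the filtered object: given $\zeta \in T$ of weight at most $w$, the graded splitting writes the top-weight part of $\zeta$ as $\bar v + \sum_l M_l \bar\eta_l$; choosing lifts $v \in \tilde V$ of the same weight and $\eta_l \in T$ of weight at most $w-1$, the residual $\zeta - v - \sum_l D_l \eta_l$ drops strictly in weight (since $E_l \eta_l$ is only a lower-order correction), so an induction on $w$ produces the splitting $T = \tilde V \oplus \sum_l D_l T$. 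Uniqueness of this splitting, and likewise the vanishing of the lower Koszul cohomology, follows from the same induction applied to a hypothetical relation by inspecting its top-weight component.

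The main technical obstacle I anticipate is carrying out the lifting rigorously: because $D_l$ is not graded, $D_l T$ is not a graded submodule of $T$, so one must verify that the weight induction terminates correctly on each fixed graded piece, using that $W$ is exhaustive, bounded below, and has finite-dimensional graded pieces over $S$. As a cross-check, the Koszul acyclicity can also be obtained by a fiber-wise criterion: specializing at any maximal ideal of $S$ recovers the absolute Koszul complex of $(D_1^{(\bar\lambda)}, D_2^{(\bar\lambda)})$ on $\bar R^{(\bar\lambda)}$, whose acyclicity is precisely Theorem \ref{thm0}, and this suffices to force $S$-regularity of $(D_1, D_2)$ on the free $S$-module $T$.
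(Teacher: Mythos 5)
The paper does not actually reprove this statement (it is quoted as Theorem 2.5 of \cite{HS17}), so your argument has to stand on its own. Its architecture is the right one and is essentially the Adolphson--Sperber/Haessig--Sperber strategy: view $\Omega^{\bullet}(T,\nabla(D))$ as the Koszul complex of $D_l=E_l+M_l$, prove the graded statement for the multiplication operators $M_l$, then lift to the $D_l$ by induction on the weight, which terminates because $W$ takes values in $(1/\tilde e)\mathbb{Z}_{\ge 0}$. That lifting step is fine as you describe it.

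The gap is the pivotal identification of $T$ (its own associated graded, since $T$ is $W$-graded) with $\bar R^{(\bar\lambda)}\otimes_{\mathbb{F}_q}S$ in a way that intertwines $M_l$ with multiplication by $x_l\partial\bar F(\bar\lambda,x)/\partial x_l$. This is false, because $m$ is only subadditive (Lemma \ref{lem3}): on the $S$-basis $\{\Lambda^{m(v)}x^v\}$ one has $\Lambda x^{\mu}\cdot\Lambda^{m(v)}x^v=\Lambda^{\delta}\cdot\Lambda^{m(v+\mu)}x^{v+\mu}$ with $\delta=m(v)+1-m(v+\mu)\ge 0$, and $\delta>0$ exactly when $w(v+\mu)<w(v)+1$; in that case the product is a nonzero element of $\Lambda^{1/D}T$, while the corresponding product in $\bar R^{(\bar\lambda)}\otimes S$ is $0$ (e.g.\ for $a=b=c=d=1$, $M_2\cdot\Lambda x_2^{-1}=\Lambda\cdot 1-\Lambda^2x_1^{-1}x_2^{-2}$, and the term $\Lambda\cdot 1$ has no counterpart on the graded side). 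So ``applying Theorem \ref{thm0} graded piece by graded piece'' does not give $T=\tilde V\oplus\sum_l M_lT$; what Theorem \ref{thm0} yields directly is only the statement modulo $S_+T=\Lambda^{1/D}T$, through the genuine isomorphism $T/S_+T\cong\bar R^{(\bar\lambda)}$ (take $\bar\lambda=1$, using that $B^{(\bar\lambda)}$ is independent of $\bar\lambda$). One then needs a second induction on the $\Lambda$-degree — a graded Nakayama argument for $Q:=T/(\tilde V+\sum_lM_lT)$, which is graded, bounded below and satisfies $Q=S_+Q$, plus a count of the $\mathbb{F}_q$-dimensions of graded pieces to get directness and freeness of rank $N$ — before your weight induction for the $E_l$-correction can start. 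The closing ``fiber-wise'' cross-check does not repair this: specializing at $\Lambda=\lambda$ produces $D_l^{(\bar\lambda)}$ acting on $R^{(\bar\lambda)}$, not the graded complexes of Theorem \ref{thm0}, and deducing $S$-regularity and freeness of $H^2$ from fiberwise data on an infinite-rank free $S$-module is exactly the kind of statement that needs the graded/$\Lambda$-adic argument you omitted.
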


\subsection{$p$-adic Theory}
Let $p>2$.
Let $\mathbb{Q}_p$ be the field of $p$-adic number, and $\mathbb{Z}_p$ be the ring of $p$-adic integers.
Let $\mathbb{Q}_q$ be the unramified extension of $\mathbb{Q}_p$ of degree $\tilde{a}$,
and $\mathbb{Z}_q$ be its ring of integers.
Let $\zeta_p$ be a primitive $p$-th root of unity. Then $\mathbb{Z}_q[\zeta_p]$ and $\mathbb{Z}_p[\zeta_p]$
are rings of integers of $\mathbb{Q}_q[\zeta_p]$ and $\mathbb{Q}_p[\zeta_p]$, respectively.
Recall $\pi$ is an element in an algebraic closure of $\mathbb{Q}_p$ such that $\pi^{p-1}=-p$.
By Krasner's lemma, we have $ \mathbb{Q}_p(\pi)=\mathbb{Q}_p(\zeta_p)$.
Adjoining an appropriate root of $\pi$, say $\tilde{\pi}$,
we obtain totally ramified extensions of $\mathbb{Q}_q(\zeta_p)$ and
$\mathbb{Q}_p(\zeta_p)$, which are denoted by $K$ and $K_0$, respectively. Let $\mathbb{Z}_q[\tilde{\pi}]$ and
$\mathbb{Z}_p[\tilde{\pi}]$ denote the respective rings of integers of $K$ and $K_0$.

Define set $\mathcal{O}_0$ by
$$\mathcal{O}_0:=\Big\{\sum_{r=0}^{\infty}C(r)\Lambda^{r/D}\pi^{W_{\Lambda}(r/D)}:
C(r)\in \mathbb{Z}_q[\tilde{\pi}],C(r)\rightarrow 0\ {\rm as}\ r \rightarrow \infty  \Big\}$$
with a valuation via
$$\Big|\sum_{r=0}^{\infty}C(r)\Lambda^{r/D}\pi^{W_{\Lambda}(r/D)}\Big|:=\sup_{r\ge 0}\{|C(r)|\}.$$
The reduction map $\mod \tilde{\pi}$ maps $\mathcal{O}_0$ onto $S=\mathbb{F}_q[\Lambda^{1/D}]$ by sending
$$ \sum_{r=0}^{\infty}C(r)\Lambda^{r/D}\pi^{W_{\Lambda}(r/D)}\mapsto\sum_{r=0}^{\infty}\bar{C}(r)\Lambda^{r/D}.$$
Then the reduction map identifies the $\mathbb{F}_q$-algebras
$$\mathcal{O}_0/ \tilde{\pi} \mathcal{O}_0\cong S.$$
We now express the $p$-adic Banach space $\mathcal{C}_0$ concretely.
Let $\gamma$ be a positive real number. Then we define
$$\mathcal{C}_0(\gamma):=\Big\{\sum_{v\in M(\bar{f},\mu)}\zeta(v)\pi^{\gamma w(v)}\Lambda^{m(v)}x^v : \zeta(v)\in \mathcal{O}_0,\zeta(v)
\rightarrow 0\ {\rm as}\ w(v) \rightarrow \infty \Big\}$$
to be a $p$-adic Banach $\mathcal{O}_0$-algebra. Especially, we write $\mathcal{C}_0$ for $\mathcal{C}_0(1)$.
Then the reduction map $\mod \tilde{\pi}$ taking
$$\sum_{v\in M(\bar{f},\mu)}\zeta(v)\pi^{w(v)}\Lambda^{m(v)}x^v \mapsto \sum_{v\in M(\bar{f},\mu)}\bar{\zeta}(v)\Lambda^{m(v)}x^v,$$
induces an isomorphism of $S$-algebras
$$\mathcal{C}_0/\tilde{\pi}\mathcal{C}_0\cong T.$$

We are now in a position to construct a complex of $p$-adic spaces.
Let
$$\theta(t):=\exp \pi(t-t^p).$$
If we write
$\theta(t)=\sum_{i=0}^{\infty}\lambda_it^i$, it then follows from \cite{[Dwo62]} that
\begin{equation}\label{eq00}
{\rm ord}_p \lambda_i\ge \frac{p-1}{p^2}\cdot i
\end{equation}
for every $i\ge 0$.


Note that $F(\Lambda,x)$ has the total weight $W\le 1$. Then multiplication by
$x_l \frac{\partial F^{(0)}}{\partial x_l}(l=1,2)$ defines an endomorphism of $\mathcal{C}_0$.
Hence we may define a complex of $\mathcal{O}_0$-modules $\Omega^{\bullet}(\mathcal{C}_0,\nabla(D^{(\Lambda)}))$ by letting
$$\Omega^{i}(\mathcal{C}_0,\nabla(D^{(\Lambda)})):=\bigoplus_{1\le j_1<j_i\le 2} \mathcal{C}_0 \frac{d x_{j_1}}{x_{j_1}}\wedge \frac{d x_{j_i}}{x_{j_i}}$$
with boundary map
$$\nabla(D^{(\Lambda)})(\zeta \frac{d x_{j_1}}{x_{j_1}}\wedge \frac{d x_{j_i}}{x_{j_i}})=\Big(\sum_{l=1}^2D_{l,\Lambda}(\zeta)\frac{d x_{l}}{x_{l}}\Big)\wedge \frac{d x_{j_1}}{x_{j_1}}\wedge\frac{d x_{j_i}}{x_{j_i}},$$
where $$D_{l,\Lambda}=x_l\frac{\partial}{\partial x_l}+x_l\frac{\partial F^{(0)}}{\partial x_l}.$$
One also has that
\begin{equation}\label{eq13}
D_{l,\Lambda}=\frac{1}{\exp F^{(0)}}\circ x_l\frac{\partial}{\partial x_l}\circ \exp F^{(0)}.
\end{equation}
Using the same argument to Theorem \ref{thm1} as \cite{HS17}, \cite{HS14}, or going back to \cite{AS89} or \cite{[PM70]}
we have the following result.
\begin{thm}(Theorem 3.1, \cite{HS17})\label{thm5}
The complex $\Omega^{\bullet}(\mathcal{C}_0,\nabla(D^{(\Lambda)}))$ is acyclic except in top dimension 2 and $H^2(\Omega^{\bullet}(\mathcal{C}_0,\nabla(D^{(\Lambda)})))$
is a free $\mathcal{O}_0$-module of rank equal to $N$.
Furthermore,
$$\mathcal{C}_0=\sum_{(m(v);v)\in \bar{\bar{B}}}\mathcal{O}_0\pi^{w(v)}\Lambda^{m(v)}x^v \oplus \sum _{l=1}^2D_{l,\Lambda}(\mathcal{C}_0).$$
\end{thm}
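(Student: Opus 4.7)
The plan is to transfer the characteristic $p$ decomposition of Theorem \ref{thm1} to the $p$-adic space $\mathcal{C}_0$ by a successive approximation argument, following the lifting strategy of Adolphson--Sperber \cite{AS89} and Haessig--Sperber \cite{HS17}. The key ingredients are: (a) the reduction isomorphism $\mathcal{C}_0/\tilde{\pi}\mathcal{C}_0\cong T$, which intertwines $D_{l,\Lambda}$ with $D_l(\Lambda,x)$; (b) the graded decomposition of $T$ in Theorem \ref{thm1}; and (c) $\pi$-adic completeness of $\mathcal{C}_0$. Before starting the iteration, I would verify that $D_{l,\Lambda}$ stabilises $\mathcal{C}_0$ and has the correct reduction modulo $\tilde{\pi}$: because $F^{(0)}=\pi F$ and $F$ is quasi-homogeneous of total weight $W=1$, multiplication of $\pi^{w(v)}\Lambda^{m(v)}x^v$ by $x_l\partial F^{(0)}/\partial x_l$ shifts the weight by exactly $1$, which is absorbed by the additional factor of $\pi$; together with Lemma \ref{lem3} this keeps us inside $\mathcal{C}_0$, and the conjugation formula (\ref{eq13}) together with direct inspection shows the reduction of $D_{l,\Lambda}$ agrees with $D_l(\Lambda,x)$.

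For the decomposition $\mathcal{C}_0 = \mathbf{V}\oplus\sum_{l=1}^{2}D_{l,\Lambda}(\mathcal{C}_0)$, where $\mathbf{V}:=\sum_{(m(v);v)\in\bar{\bar{B}}}\mathcal{O}_0\pi^{w(v)}\Lambda^{m(v)}x^v$, I proceed iteratively. Given $\xi\in\mathcal{C}_0$, the reduction $\bar{\xi}\in T$ decomposes as $\bar{\xi}=\bar{\eta}_0+\sum_l D_l\bar{\zeta}_{l,0}$ by Theorem \ref{thm1}, with the decomposition respecting the weight filtration. Lift $\bar{\eta}_0,\bar{\zeta}_{l,0}$ monomial by monomial to $\eta_0\in\mathbf{V}$ and $\zeta_{l,0}\in\mathcal{C}_0$ without increasing weights; then $\xi-\eta_0-\sum_l D_{l,\Lambda}\zeta_{l,0}$ lies in $\tilde{\pi}\mathcal{C}_0$, and after dividing by $\tilde{\pi}$ we repeat the process on the quotient. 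The $\pi$-adic sums $\eta:=\sum_k\tilde{\pi}^k\eta_k$ and $\zeta_l:=\sum_k\tilde{\pi}^k\zeta_{l,k}$ converge in $\mathbf{V}$ and $\mathcal{C}_0$ respectively, producing the desired decomposition $\xi=\eta+\sum_l D_{l,\Lambda}\zeta_l$. Directness is established by reducing any alleged relation modulo $\tilde{\pi}$ to invoke directness of Theorem \ref{thm1}, then dividing by $\tilde{\pi}$ and repeating; the vanishing $\bigcap_k\tilde{\pi}^k\mathcal{C}_0=0$ then forces each component to vanish. This yields $H^2\cong\mathbf{V}$ free over $\mathcal{O}_0$ of rank $N=2!\,\mathrm{Vol}\,\Delta_\infty(\bar{f},\mu)$.

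Acyclicity of $\Omega^\bullet(\mathcal{C}_0,\nabla(D^{(\Lambda)}))$ in dimensions $0$ and $1$ is handled by the same device: a cocycle $\omega\in\Omega^i$ with $i<2$ has $\bar{\omega}$ a cocycle of the characteristic $p$ complex, hence $\bar{\omega}=\nabla(D)\bar{\rho}$ by Theorem \ref{thm1}; lifting $\bar{\rho}$ and subtracting $\nabla(D^{(\Lambda)})\rho$ leaves a cocycle in $\tilde{\pi}\Omega^i$, so iterating and summing $\pi$-adically produces a global cobounding element. The main technical obstacle throughout will be ensuring that the iterated lifts remain uniformly bounded in the $\mathcal{C}_0$-norm, i.e.\ that at each step the lift is chosen in the same weight stratum as the element being decomposed; this is legitimate precisely because the characteristic $p$ decomposition in Theorem \ref{thm1} is graded by the total weight function $W$, and it is this control that makes the $\pi$-adic geometric series genuinely converge inside $\mathcal{C}_0$ rather than in some completion that enlarges the space.
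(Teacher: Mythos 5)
Your proposal is correct and is essentially the argument the paper relies on: the paper does not prove Theorem \ref{thm5} itself but cites the standard lifting argument of \cite{HS17}, \cite{HS14}, \cite{AS89}, \cite{[PM70]}, which is exactly your scheme of reducing modulo $\tilde{\pi}$ to the graded decomposition of Theorem \ref{thm1} and then iterating with weight-controlled lifts so the $\tilde{\pi}$-adic series converge in $\mathcal{C}_0$. The points you flag (that $D_{l,\Lambda}$ stabilises $\mathcal{C}_0$ because $\bar{F}$ has total weight $1$, that the reduction intertwines $D_{l,\Lambda}$ with $D_l(\Lambda,x)$, and that directness and acyclicity in degrees $0,1$ follow by the same successive approximation) are precisely the ingredients of that cited proof.
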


\subsection{Frobenius map}

Set $$\alpha_0:=\sigma^{-1}\circ \frac{1}{\exp F^{(0)}(\Lambda^p,x)}\circ \psi_p\circ \exp F^{(0)}(\Lambda,x)$$
 and
$$\alpha:=\frac{1}{\exp F^{(0)}(\Lambda^q,x)}\circ \psi_q\circ \exp F^{(0)}(\Lambda,x),$$
where $\psi_p$ and $\psi_q$ are defined as
$$\psi_p(\sum A(v)x^v)=\sum A(pv)x^v$$
$$\psi_q(\sum A(v)x^v)=\sum A(qv)x^v,$$
and $\sigma\in Gal(\mathbb{Q}_q(\zeta_p)/\mathbb{Q}_p(\zeta_p))$ is the Frobenius automorphism of $Gal(\mathbb{Q}_q/\mathbb{Q}_p)$
extended to $K$ by requiring $\sigma(\tilde{\pi})=\tilde{\pi}$ and $\sigma(\zeta_p)=\zeta_p$.

By (\ref{eq13})
the following communication laws hold
\begin{equation}\label{eq21}
q D_{l,\Lambda^q}\circ \alpha=\alpha\circ D_{l,\Lambda}\
{\rm and}\ p D_{l,\Lambda^p}\circ \alpha_0=\alpha_0\circ D_{l,\Lambda}
\end{equation}
for $l=1,2$.
Since the communication laws hold up to the change from $\Lambda$ to $\Lambda^q$, this motivates us to introduce some new spaces as Haessig and Sperber \cite{HS17}. To construct new spaces, we view $\Lambda^q$ as one variable instead of $\Lambda$.
We view $\Lambda^r=(\Lambda^q)^{r/q}$ for $r\in (1/D)\mathbb{Z}_{\ge 0}$. Then define
$$W_{\Lambda^q}(\Lambda^r):=(r/q)(1-l_{\sigma}(\mu))=W_{\Lambda}(\Lambda^{r/q}).$$
Hence we define the monoid $\tilde{M}_q(\bar{F})$ analogously to (\ref{eq4}) by
$$\tilde{M}_q(\bar{F}):=\{ (r;v)\in (1/D)\mathbb{Z}_{\ge 0}\times \mathbb{Z}^2: v\in Cone(\bar{f},\mu), r\ge qm(v)\}. $$
Define for $q=p^{\tilde{a}}$ with $\tilde{a}\in \mathbb{Z}_{\ge 0}$
$$\mathcal{O}_{0,q}:=\Big\{\sum_{r=0}^{\infty}A(r)\Lambda^{r/D}\pi^{W_{\Lambda^q}(r/D)}:
A(r)\in \mathbb{Z}_q[\tilde{\pi}], A(r)\rightarrow 0\ {\rm as}\ r \rightarrow \infty  \Big\}$$
with valuation
$$\Big|\sum_{r=0}^{\infty}A(r)\Lambda^{r/D}\pi^{W_{\Lambda^q}(r/D)}\Big|:=\sup_{r\ge 0}\{|A(r)|\}.$$
For positive real number $\gamma$, let
$$\mathcal{C}_{0,q}(\gamma):=\Big\{\sum_{v\in M(\bar{f},\mu)}\zeta(v)\pi^{\gamma w(v)}\Lambda^{qm(v)}x^v : \zeta(v)\in \mathcal{O}_{0,q},\zeta(v)
\rightarrow 0\ {\rm as}\ w(v) \rightarrow \infty \Big\}$$
be a $p$-adic Banach space with valuation
 $$\Big|\sum_{v\in M(\bar{f},\mu)}\zeta(v)\pi^{\gamma w(v)}\Lambda^{qm(v)}x^v\Big|:=\sup_{v\in M(\bar{f},\mu)}\{ |\zeta(v)|\}.$$
We write $\mathcal{C}_{0,q}$ for $\mathcal{C}_{0,q}(1)$.
For $(r;v)\in \tilde{M}_q(\bar{F})$, the total weight $W_q$ is defined as
$$W_q(r;v):=W_{\Lambda^q}(r-qm(v))+w(v).$$
Then the reduction map $\mod \tilde{\pi}$ acts on $\mathcal{C}_{0,q}$ by taking
$$\sum_{v\in M(\bar{f},\mu)}\sum_{r=0}^{\infty}A(r)\pi^{W_q(r,v)}\Lambda^{r}x^v \mapsto \sum_{v\in M(\bar{f},\mu)}\sum_{r=0}^{\infty}\bar{A}(r)\Lambda^{r}x^v.$$
Then Haessig and Sperber proved the following result.
\begin{thm}\label{thm2}(Theorem 3.2, \cite{HS17})
Let $D_{l,\Lambda^q}=x_l\frac{\partial}{\partial x_l}+x_l\frac{\partial F^{(0)}(\Lambda^q,x)}{\partial x_l}$.
Let $\Omega^{\bullet}(\mathcal{C}_{0,q}, \nabla(D^{(\Lambda^q)}))$ be the complex
$$\Omega^i:=\bigoplus _{1\le j_1<j_i\le 2}\mathcal{C}_{0,q} \frac{d x_{j_1}}{x_{j_1}}\wedge\frac{d x_{j_i}}{x_{j_i}}$$
with boundary map
$$ \nabla (D^{(\Lambda^q)})(\zeta \frac{d x_{j_1}}{x_{j_1}}\wedge \frac{d x_{j_i}}{x_{j_i}})=\Big(\sum_{l=1}^2 D_{l,\Lambda^q}(\zeta)\frac{d x_{l}}{x_{l}}\Big)\wedge
\frac{d x_{j_1}}{x_{j_1}}\wedge\frac{d x_{j_i}}{x_{j_i}}.$$
This complex is acyclic except in top dimension 2 and $H^2(\Omega^{\bullet}(\mathcal{C}_{0,q}, \nabla(D^{(\Lambda^q)}) ))$
is a free $\mathcal{O}_{0,q}$-module of rank equals to $N$.
Furthermore,
$$\mathcal{C}_{0,q}=\sum_{v\in \tilde{B}} \mathcal{O}_{0,q} \pi^{w(v)}\Lambda^{qm(v)}x^v \oplus
\sum_{l=1}^2 D_{l,\Lambda^q}\mathcal{C}_{0,q}$$
where $\tilde{B}:=\{v:(m(v);v)\in \bar{\bar{B}}\}$.
\end{thm}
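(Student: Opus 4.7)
The plan is to mirror exactly the proof of Theorem \ref{thm5} (Theorem 3.1 of \cite{HS17}), substituting $\Lambda \mapsto \Lambda^q$ in all the relevant constructions. Observe that $F^{(0)}(\Lambda^q,x)$ is still a $p$-adic lift of a quasi-homogeneous nondegenerate Laurent polynomial (in $2$ variables over the coefficient ring $\mathcal{O}_{0,q}$), so the whole machinery applies, but with the weight function $W_q(r;v)=W_{\Lambda^q}(r-qm(v))+w(v)$ recording the fact that in $\mathcal{C}_{0,q}$ the variable $\Lambda$ comes ``pre-shifted'' by $\Lambda^{qm(v)}$ before the $\Lambda^q$-power series in $\mathcal{O}_{0,q}$ is applied. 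First I would verify the bookkeeping: $\mathcal{O}_{0,q}$ is the correct $p$-adic analogue of $S=\mathbb{F}_q[\Lambda^{1/D}]$ (identified via reduction mod $\tilde{\pi}$ taking $\Lambda^q$ to $\Lambda^q$), and $\mathcal{C}_{0,q}/\tilde{\pi}\mathcal{C}_{0,q}\cong T$ under the reduction map displayed just before the theorem.

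Next I would show that each $D_{l,\Lambda^q}=x_l\partial/\partial x_l + x_l\partial F^{(0)}(\Lambda^q,x)/\partial x_l$ is a continuous $\mathcal{O}_{0,q}$-linear endomorphism of $\mathcal{C}_{0,q}$. The term $x_l\partial/\partial x_l$ clearly preserves weight. For the multiplication by $x_l\partial F^{(0)}(\Lambda^q,x)/\partial x_l$, the point is that $F^{(0)}(\Lambda^q,x)=\pi F(\Lambda^q,x)$ has total weight $W_q\le 1$: the monomials $x_1^a$ and $x_2^b$ have $W_q=1$ with no $\Lambda$, while the deforming term contributes $\pi\Lambda^q\cdot x^\mu=\pi\Lambda^{q-qm(\mu)}\Lambda^{qm(\mu)}x^\mu$, and since $m(\mu)=1$ we get $W_q=W_{\Lambda^q}(0)+w(\mu)=w(\mu)\le 1$. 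The $\pi$ factor ensures the bounded-from-below $p$-adic decay that permits multiplication to extend to the completed space $\mathcal{C}_{0,q}$.

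With this in hand, the acyclicity in dimensions $<2$ and the direct-sum decomposition follow by the standard Dwork--Adolphson--Sperber successive-approximation argument on the weight filtration. Concretely: reducing mod $\tilde{\pi}$ gives $T=\tilde{V}\oplus\sum_l D_l T$ by Theorem \ref{thm1} (Theorem 2.5 of \cite{HS17}), since $T$ depends only on the underlying monoid and not on whether we write $\Lambda$ or $\Lambda^q$. Any $\zeta\in\mathcal{C}_{0,q}$ therefore differs from an $\mathcal{O}_{0,q}$-combination of $\{\pi^{w(v)}\Lambda^{qm(v)}x^v:v\in\tilde{B}\}$ plus an element of $\sum_l D_{l,\Lambda^q}\mathcal{C}_{0,q}$ by a $\tilde{\pi}$-multiple of another element of $\mathcal{C}_{0,q}$; iterating and summing the geometric series in $\tilde{\pi}$ produces the decomposition. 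The same argument in lower degrees gives acyclicity, and the explicit basis exhibits $H^2$ as a free $\mathcal{O}_{0,q}$-module of rank $|\tilde{B}|=N$.

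The main obstacle, and the place where a careful check is needed, is the weight accounting for $D_{l,\Lambda^q}$: the operator $x_l\partial/\partial x_l$ acting on a monomial $\Lambda^{qm(v)}x^v$ fixes it up to a scalar in $v_l$, but the $p\nmid abcd$ hypothesis is used implicitly through the nondegeneracy of $\bar{F}$ (invoked for Theorem \ref{thm1}) to ensure that multiplication by $x_l\partial F^{(0)}(\Lambda^q,x)/\partial x_l$ has the same top-weight behaviour as in the $\Lambda$-case. Once this is verified, the rest is a mechanical transcription of the $\Lambda$-proof with $\Lambda$ replaced by $\Lambda^q$ throughout, together with the book-keeping that $\Lambda^{qm(v)}$ (rather than $\Lambda^{m(v)}$) is the correct leading monomial in each weighted direction.
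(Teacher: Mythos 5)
The paper offers no proof of this statement at all---it is quoted as Theorem 3.2 of Haessig--Sperber---and your sketch reproduces the standard reduce-mod-$\tilde{\pi}$-and-lift (successive approximation) argument on which both that source and the paper's Theorem \ref{thm5} rest, so your approach is essentially the same as the one being invoked. One bookkeeping correction: $\mathcal{C}_{0,q}/\tilde{\pi}\mathcal{C}_{0,q}$ is not $T$ but the $S$-algebra $\mathbb{F}_q[\tilde{M}_q(\bar{F})]$, free over $S$ on $\{\Lambda^{qm(v)}x^v\}_{v\in M(\bar{f},\mu)}$, and the reduced boundary operators are built from $\bar{F}(\Lambda^q,x)$; so the characteristic-$p$ input you need is the $q$-analogue of Theorem \ref{thm1} for this algebra (which holds by the identical argument, the substitution $\Lambda\mapsto\Lambda^q$ preserving the relevant graded structure and nondegeneracy), rather than Theorem \ref{thm1} itself as stated.
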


For $0<\gamma< \frac{(p-1)^2}{p}$,
it follows from (\ref{eq00}) that
$$\mathfrak{F}(\Lambda,x):= \theta(\Lambda x^{\mu})\theta (x_1^a)\theta(x_2^b)\in \mathcal{C}_0\Big(\frac{(p-1)^2}{p^2}\Big)\subset
\mathcal{C}_0\Big(\frac{\gamma}{p}\Big).$$
Let $$\mathfrak{F}_{\tilde{a}}(\Lambda,x):=\prod_{i=0}^{\tilde{a}-1}\mathfrak{F}^{\sigma^i}(\Lambda^{p^i},x^{p^i}).$$
Then
$$\mathfrak{F}_{\tilde{a}}(\Lambda,x)\in \mathcal{C}_0\Big(\frac{\gamma}{q}\Big).$$
Hence for $0<\gamma< \frac{(p-1)^2}{p}$, we have
$$\mathcal{C}_0(\gamma)\subseteq \mathcal{C}_0\Big(\frac{\gamma}{p}\Big)\stackrel{ \mathfrak{F}(\Lambda,x)}\longrightarrow \mathcal{C}_0\Big(\frac{\gamma}{p}\Big)\stackrel{ \psi_p}\longrightarrow \mathcal{C}_{0,p}(\gamma) .$$
We can check that
$$ \alpha_0=\sigma^{-1}\circ \psi_p\circ \mathfrak{F}(\Lambda,x) \ {\rm and}\ \alpha=\psi_q\circ \mathfrak{F}_{\tilde{a}}(\Lambda,x).$$
Note that $p>2$. Then $\frac{(p-1)^2}{p}>1$.
 Then we see that $\alpha_0$ maps $\sigma^{-1}$-semilinearly $\mathcal{C}_0$
into $\mathcal{C}_{0,p}.$ Similarly, $\alpha$ maps $\mathcal{C}_0$
into $\mathcal{C}_{0,q}$ linearly over $\mathbb{Z}_q[\tilde{\pi}]$.
Hence we may use $\alpha_0$ and $\alpha$ to define chain maps as follows.

Let
\begin{equation}\label{eq14}
Frob_{\Lambda}^i:= \bigoplus _{1\le j_1<j_i\le 2} q^{2-i}\alpha \frac{d x_{j_1}}{x_{j_1}}\wedge\frac{d x_{j_i}}{x_{j_i}},
Frob_{0,\Lambda}^i:= \bigoplus _{1\le j_1<j_i\le 2} p^{2-i}\alpha_0 \frac{d x_{j_1}}{x_{j_1}}\wedge \frac{d x_{j_i}}{x_{j_i}}.
\end{equation}
The commutation rules (\ref{eq21}) ensure that (\ref{eq14}) defines chain maps
$$\Omega^{\bullet}(\mathcal{C}_{0},\nabla (D^{(\Lambda)}))\stackrel{Frob_{0,\Lambda}}{\longrightarrow} \Omega^{\bullet}(\mathcal{C}_{0,p},\nabla (D^{(\Lambda^p)}))$$
and
$$\Omega^{\bullet}(\mathcal{C}_{0},\nabla (D^{(\Lambda)}))\stackrel{Frob_{\Lambda}}{\longrightarrow} \Omega^{\bullet}(\mathcal{C}_{0,q},\nabla (D^{(\Lambda^q)})).$$

Let $\bar{\lambda}\in \bar{\mathbb{F}}_q^{*}$ with $\deg (\bar{\lambda})=[\mathbb{F}_q(\bar{\lambda}):\mathbb{F}_q]$.
We define an additive character $\Theta:\mathbb{F}_q\rightarrow \bar{\mathbb{Q}}_p$ by $\Theta:=\theta(1)^{Tr_{\mathbb{F}_q/\mathbb{F}_p}(\cdot)}$
and $\Theta_{\bar{\lambda}}:=\Theta \circ Tr_{\mathbb{F}_q(\bar{\lambda})/\mathbb{F}_p}(\cdot) $.
We consider the toric exponential sum
$$S_k(\bar{F},\bar{\lambda}):=\sum_{x\in (\mathbb{F}_{q^{k\deg (\bar{\lambda})}}^{*})^2} \Theta_{\bar{\lambda}} \circ Tr_{\mathbb{F}_{q^{k\deg(\bar{\lambda})}}/\mathbb{F}_{q}(\bar{\lambda}) }(\bar{F}(\bar{\lambda},x))$$
and the associated $L$-function
$$L(\bar{F}(\bar{\lambda},x),T):=\exp\Big(\sum_{k=1}^{\infty} S_k(\bar{F},\bar{\lambda})\frac{T^k}{k}\Big).$$

Let $\lambda$ be the Teichum${\rm \ddot{u}}$ller representative of $\bar{\lambda}$. Let $\mathcal{O}_{0,\lambda}:=\mathbb{Z}_q[\tilde{\pi},\lambda]$.
Let $sp_{\lambda}$ be the specialization map at $\lambda$, from $\mathcal{O}_0$ to $\mathcal{O}_{0,\lambda}$ induced by the map
sending $\Lambda\mapsto \lambda$. Let $\mathcal{C}_{0,\lambda}$ be the $\mathcal{O}_{0,\lambda}$-module obtained by specializing the space
$\mathcal{C}_{0}$ at $\Lambda=\lambda$. The complex $\Omega^{\bullet}(\mathcal{C}_{0,\lambda},\nabla(D^{(\lambda)}))$ is defined
as the complex $\Omega^{\bullet}(\mathcal{C}_{0},\nabla(D^{(\Lambda)}))$ but with $\mathcal{C}_0$ replaced by $\mathcal{C}_{0,\lambda}$
and $D_{l,\Lambda}=x_l\frac{\partial}{\partial x_l}+\pi x_l \frac{\partial F(\Lambda,x)}{\partial x_l}$
replaced by $D_{l,\lambda}=x_l\frac{\partial}{\partial x_l}+\pi x_l \frac{\partial F(\lambda,x)}{\partial x_l}$.
Furthermore, we define $\mathfrak{F}(\lambda,x):=sp_{\lambda}\mathfrak{F}(\Lambda,x)$,
$\mathfrak{F}_{\tilde{a}}(\lambda,x):=sp_{\lambda}\mathfrak{F}_{\tilde{a}}(\Lambda,x)$ and set
$\alpha_{0,\lambda}:=\sigma^{-1}\circ \psi_p\circ \mathfrak{F}(\lambda,x)$
and $\alpha_{\lambda}:=\alpha_{0,\lambda}^{\tilde{a}\deg(\bar{\lambda})}$.
We define $Frob_{\lambda}^{\bullet}$ acting as a chain map on $\Omega^{\bullet}(C_{0,\lambda}, \nabla(D^{(\lambda)}))$
as in (\ref{eq14}) but with $\alpha$ replaced by $\alpha_{\lambda}$
and $q$ replaced by $q^{\deg{(\bar{\lambda})}}$. Then by the Dwork's trace formula, we have
\begin{align*}
S_k(\bar{F},\bar{\lambda})&=(q^{k\deg{(\bar{\lambda})}}-1)^2Tr(\alpha_{\lambda}\ |\ \mathcal{C}_{0,\lambda})\\
&=\sum_{i=0}^2(-1)^iTr(H^i(Frob_{\lambda})^k\ |\ H^i(\mathcal{C}_{0,\lambda}, \nabla (D^{(\lambda)}))).
\end{align*}

It has been proved in \cite{AS89} that the cohomology of $\Omega^{\bullet}(C_{0,\lambda}, \nabla(D^{(\lambda)}))$ is acyclic except in top dimension $2$, then we have
$$S_k(\bar{F},\bar{\lambda})= Tr\big(H^2 (Frob_{\lambda})^k| H^2(\mathcal{C}_{0,\lambda}, \nabla (D^{(\lambda)}))\big).$$
Hence
$$L(\bar{F}(\bar{\lambda},x), T)^{-1}=\det (1-Frob_{\lambda}T|H^2(\mathcal{C}_{0,\lambda}, \nabla (D^{(\lambda)}))).$$

It follows from \cite{AS89} that for each $\bar{\lambda}\in \bar{\mathbb{F}}_q^{*}$, the Newton polygon of $L(\bar{F}(\bar{\lambda},x), T)^{-1}$
lies over the Newton polygon using $ord_{q^{\deg(\bar{\lambda})}}$ of
\begin{equation}\label{eq27}
\prod_{v\in \tilde{B}}(1-(q^{\deg(\bar{\lambda})})^{w(v)}T),
\end{equation}
where $\tilde{B}$ is defined as Theorem \ref{thm2}.

\section{Lower bound for Newton polygon}

\subsection{Basis}
Recall that $B$ is the set of vectors whose coordinates are integers satisfying condition (\ref{eq02}).
We have that
\begin{lem}\label{lem1}
Fix $\bar{\lambda}\in \mathbb{F}_q^{*}$. Let monomial $x_1^{v_1}x_2^{v_2}$ such that $v_1,v_2\in \mathbb{Z}$ and $-c<v_1\le a,-d<v_2\le b$. Then
$x_1^{v_1}x_2^{v_2}$ is a linear combination of elements in $B$ over $\mathbb{F}_q$ modulo $\sum_{l=1}^2D_{l}^{(\bar{\lambda})}\bar{R}^{(\bar{\lambda})}$.
\end{lem}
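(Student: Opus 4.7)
The plan is to extract explicit rewriting rules in the associated graded ring $\bar R^{(\bar\lambda)}$ from the differential operators $D_l^{(\bar\lambda)}$, and then to iterate them to carry any monomial in the rectangle $(-c,a]\times(-d,b]$ into the set $B$. The starting observation is that in the graded picture the Euler term $x_l\,\partial/\partial x_l$ has strictly lower weight than $L_l:=x_l\,\partial\bar F(\bar\lambda,x)/\partial x_l$, so modulo $\sum_{l}D_l^{(\bar\lambda)}\bar R^{(\bar\lambda)}$ the generators are $L_1=ax_1^a-c\bar\lambda x_1^{-c}x_2^{-d}$ and $L_2=bx_2^b-d\bar\lambda x_1^{-c}x_2^{-d}$. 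Multiplying each by an arbitrary $x^v$ yields
\[
(R_1)\colon a\,x^{v+(a,0)}\equiv c\bar\lambda\,x^{v+(-c,-d)},\qquad (R_2)\colon b\,x^{v+(0,b)}\equiv d\bar\lambda\,x^{v+(-c,-d)},
\]
and eliminating the common right-hand side produces the pivotal identity
\[
(R_3)\colon ad\,x^{v+(a,0)}\equiv bc\,x^{v+(0,b)},
\]
so translation by $(a,-b)$ acts on monomial classes by the nonzero scalar $ad/(bc)\in\mathbb{F}_q^{\times}$ (using $p\nmid abcd$).

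The body of the proof is a case split matching the four branches in the definition of $B$. In the generic case $c>1,\,d>1$, the rectangle minus $B$ is the disjoint union of an upper triangle $\{v_2\ge\frac{d-1}{c-1}v_1+b\}$ (which forces $v_1\le 0$) and a lower triangle $\{v_2<\frac{d-1}{c-1}(v_1-a)\}$ (which forces $v_1\ge 1$). For $(v_1,v_2)$ in the upper triangle I apply $(R_3)$ iteratively, sending $(v_1,v_2)\mapsto(v_1+ka,v_2-kb)$. Setting $M:=v_2-\frac{d-1}{c-1}v_1$ and $\Delta:=b+\frac{(d-1)a}{c-1}$, the quantity $M$ drops by $\Delta$ per step; the two defining inequalities of $B$ translate to $M'\in[-\frac{(d-1)a}{c-1},b)$, and the half-open interval of admissible $k$ has length exactly $1$, so it contains a unique integer $k\ge 1$. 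The lower triangle is handled symmetrically by iterating $-(a,-b)$. The three degenerate cases $c=1$, $d=1$, $c=d=1$ are easier, since the excluded subset of the rectangle collapses to a short segment or a single vertex and is dispatched by one or two direct applications of $(R_1)$, $(R_2)$, or $(R_3)$.

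The main obstacle is to verify that the chosen $k$ also keeps the translated point inside the rectangle $(-c,a]\times(-d,b]$, not merely between the two slanted boundary lines of $B$. The clean computation in the upper-triangle case is that $v_1+ka\le a$ is equivalent to $\frac{v_1}{a}+\frac{v_2}{b}\le 1$, which holds because $v_1\le 0$ and $v_2\le b$; the companion bound $v_2-kb>-d$ then follows from the strict inequality $v_1>-c$ by a parallel manipulation using the slope $\frac{d-1}{c-1}$. The lower-triangle case is dispatched by the dual argument, after swapping the roles of the two slanted sides of $B$. Once both rectangle bounds are checked in every sub-case, the lemma is immediate from $(R_3)$.
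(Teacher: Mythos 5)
Your geometric analysis of the region $B$ (the slanted strip of width $\Delta=b+\tfrac{(d-1)a}{c-1}$, the unique translate $k$, and the check that the translated point stays in the rectangle) is essentially sound, but the algebraic rewriting rules it rests on are not valid in the quotient the lemma is actually about, and this is a genuine gap. The submodule in the statement is $\sum_{l}D_l^{(\bar\lambda)}\bar R^{(\bar\lambda)}$ with the \emph{full} operators $D_l^{(\bar\lambda)}=x_l\frac{\partial}{\partial x_l}+x_l\frac{\partial\bar F}{\partial x_l}$, so the Euler term cannot be discarded: from $D_1^{(\bar\lambda)}(x^v)=v_1x^v+a\,x^{v+(a,0)}-c\bar\lambda\,x^{v+(-c,-d)}$ one gets $a\,x^{v+(a,0)}\equiv c\bar\lambda\,x^{v+(-c,-d)}-v_1x^v$, not your $(R_1)$, and eliminating the $\mu$-monomial gives
\begin{equation*}
ad\,x^{v+(a,0)}\;\equiv\;bc\,x^{v+(0,b)}+(cv_2-dv_1)\,x^{v}\pmod{\textstyle\sum_{l}D_l^{(\bar\lambda)}\bar R^{(\bar\lambda)}},
\end{equation*}
not your $(R_3)$. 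Your rules hold only modulo the smaller-looking module $\sum_l x_l\frac{\partial\bar F}{\partial x_l}\bar R^{(\bar\lambda)}$. So the statement that translation by $(a,-b)$ acts on classes by the scalar $ad/(bc)$ is false in the relevant quotient, and the one-shot jump by the unique $k$ does not by itself prove the lemma. The extra term $(cv_2-dv_1)x^v$ is exactly what the paper's proof is organized around: each reduction step produces \emph{two} monomials (the translate and the Euler correction), and the paper's claims verify that both either lie in $B$ or again satisfy the inductive hypotheses, then iterates. Note also that with your choice of reduction direction the correction monomial sits at $(v_1,v_2-b)$, which can leave the rectangle (its second coordinate can drop to $\le -d$), so simply ``carrying the corrections along'' requires choosing the direction as the paper does, or an induction over all monomials by weight.

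There are two honest repairs: (a) keep the correction terms and redo your iteration tracking both monomials at each step, which essentially reproduces the paper's argument; or (b) prove the spanning statement modulo $\sum_l x_l\frac{\partial\bar F}{\partial x_l}\bar R^{(\bar\lambda)}$ as you did, and then supply a separate weight-induction transferring it to $\sum_l D_l^{(\bar\lambda)}\bar R^{(\bar\lambda)}$ (writing $x_lh\frac{\partial\bar F}{\partial x_l}=D_l^{(\bar\lambda)}(h)-x_l\frac{\partial h}{\partial x_l}$ and reducing the strictly lower-weight remainder). You assert neither; the sentence ``so modulo $\sum_l D_l^{(\bar\lambda)}\bar R^{(\bar\lambda)}$ the generators are $L_1,L_2$'' is precisely the unjustified step. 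A secondary point: your claimed equivalence ``$v_1+ka\le a\iff \frac{v_1}{a}+\frac{v_2}{b}\le 1$'' is not correct as stated; the rectangle bounds do hold, but the clean argument is that $v_1+ka>a$ together with the left strip inequality would force $v_2-kb>0$, contradicting $v_2\le b$ and $k\ge1$, and similarly for the lower bound using $v_1\ge -c+1$ and integrality.
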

\begin{proof}
We only prove the case that $c>1$ and $d>1$ since other cases can be done in a similar and simpler way.
In what follows, we let $c>1$ and $d>1$. If $(v_1,v_2)$ satisfies
$$\frac{d-1}{c-1}(v_1-a)\le v_2< \frac{d-1}{c-1}v_1+b,$$
then $x_1^{v_1}x_2^{v_2}\in B$. Hence it remains to show that Lemma \ref{lem1} holds for $(v_1,v_2)$ satisfies 
$$v_2<\frac{d-1}{c-1}(v_1-a)\ {\rm or}\ v_2\ge \frac{d-1}{c-1}v_1+b.$$
Here we only prove the case $v_2<\frac{d-1}{c-1}(v_1-a)$ since the other case can be handled in a similar way.
We have the following claim.

{\it Claim. Suppose that $(u_1,u_2)\in \mathbb{Z}^2$ satisfies
\begin{equation}\label{eq000}
-c<u_1\le a, -d<u_2\le b,u_2<\frac{d-1}{c-1}(u_1-a).
\end{equation}
Then one has:\\
(1). Monomial $x_1^{u_1}x_2^{u_2}$ is a linear combination of $x_1^{u_1-a}x_2^{u_2}$
and $x_1^{u_1-a}x_2^{u_2+b}$ modulo $\sum_{l=1}^2D_{l}^{(\bar{\lambda})}\bar{R}^{(\bar{\lambda})}$.\\
(2). If $x_1^{u_1-a}x_2^{u_2}\in B$, then
$x_1^{u_1-a}x_2^{u_2+b}\in B $.\\
(3). If $x_1^{u_1-a}x_2^{u_2}\notin B$, then $(u_1-a,u_2)$ satisfies
(\ref{eq000}). If $x_1^{u_1-a}x_2^{u_2+b}\notin B$, then $(u_1-a,u_2+b)$ satisfies
(\ref{eq000}).}

Using operators $D_{1}^{(\bar{\lambda})}$ and $D_{2}^{(\bar{\lambda})}$ to act on
$x_1^{u_1-a}x_2^{u_2}$, we obtain that
$$D_{1}^{(\bar{\lambda})}(x_1^{u_1-a}x_2^{u_2})=(u_1-a)x_1^{u_1-a}x_2^{u_2}+ax_1^{u_1}x_2^{u_2}-c\bar{\lambda} x_1^{u_1-a-c}x_2^{u_2-d}$$
and
$$D_{2}^{(\bar{\lambda})}(x_1^{u_1-a}x_2^{u_2})=u_2x_1^{u_1-a}x_2^{u_2}+bx_1^{u_1-a}x_2^{u_2+b}-d\bar{\lambda} x_1^{u_1-a-c}x_2^{u_2-d}.$$
Thus
\begin{equation*}\label{eq0}
x_1^{u_1}x_2^{u_2}\equiv \frac{1}{a}(c\bar{\lambda} x_1^{u_1-a-c}x_2^{u_2-d}-(u_1-a)x_1^{u_1-a}x_2^{u_2}) \mod D_{1}^{(\bar{\lambda})}(x_1^{u_1-a}x_2^{u_2})
\end{equation*}
and
\begin{equation*}\label{eq22}
x_1^{u_1-a-c}x_2^{u_2-d}\equiv \frac{1}{d\bar{\lambda}}(u_2x_1^{u_1-a}x_2^{u_2}+bx_1^{u_1-a}x_2^{u_2+b}) \mod D_{2}^{(\bar{\lambda})}(x_1^{u_1-a}x_2^{u_2}).
\end{equation*}
Hence
$$ x_1^{u_1}x_2^{u_2}\equiv\frac{1}{ad}((cu_2-(u_1-a)d)x_1^{u_1-a}x_2^{u_2}+bcx_1^{u_1-a}x_2^{u_2+b}) \mod \sum_{l=1}^2D_{l}^{(\bar{\lambda})}\bar{R}^{(\bar{\lambda})}.$$
This finishes the proof of claim (1).

Since $u_2<\frac{(d-1)(u_1-a)}{c-1}$ and $u_1\le a$, one has that $u_2<0$.
It follows that
\begin{equation}\label{eq2}
-d<u_2+b< b.
\end{equation}
It follows from $u_2< \frac{(d-1)(u_1-a)}{c-1}$ that
$$u_2+b<\frac{(d-1)(u_1-a)}{c-1}+b.$$
But $-c<u_1-a\le a$ and $\frac{(d-1)(u_1-2a)}{c-1}\le u_2< \frac{(d-1)(u_1-a)}{c-1}+b$. Hence
\begin{equation}\label{eq3}
\frac{(d-1)(u_1-2a)}{c-1}\le  u_2+b<\frac{(d-1)(u_1-a)}{c-1}+b .
\end{equation}
From (\ref{eq000}),(\ref{eq2}) and (\ref{eq3}) we conclude that
$x_1^{u_1-a}x_2^{u_2+b}\in B$. Hence claim (2) is true.

If $x_1^{u_1-a}x_2^{u_2}\notin B$, then $(u_1,u_2)$ satisfies that
$$u_2<\frac{(d-1)(u_1-2a)}{c-1}, \ {\rm or} \ u_1-a\le -c,\ {\rm or}\ u_2\ge \frac{(d-1)(u_1-a)}{c-1}+b.$$
If $u_1-a\le -c$, then
$$\frac{(d-1)(u_1-a)}{c-1}-1\le \frac{-c(d-1)}{c-1}-1< -d.$$
But $u_2>-d$. We have
$$ \frac{(d-1)(u_1-a)}{c-1}< -d+1\le u_2,$$
which contradicts to the assumption $u_2< \frac{(d-1)(u_1-a)}{c-1}$.
Hence $-c<u_1-a< a$.

By the hypothesis $u_2<\frac{(d-1)(u_1-a)}{c-1}$ and $b>0$, we have that
$$u_2< \frac{(d-1)(u_1-a)}{c-1}+b.$$
Hence $u_2<\frac{(d-1)(u_1-2a)}{c-1}$. This implies that $(u_1-a,u_2)$ satisfies
(\ref{eq000}).

If $x_1^{u_1-a}x_2^{u_2+b}\notin B$, then $(u_1,u_2)$ satisfies that
$$u_2+b<\frac{(d-1)(u_1-2a)}{c-1}, \ {\rm or} \ u_1-a\le -c,\ {\rm or}\ u_2+b\ge \frac{(d-1)(u_1-a)}{c-1}+b.$$
We have proved that $-c<u_1-a< a$. Note that $-d<u_2\le 0$. Then $-d<u_2+b\le b$. Clearly, $u_2+b< \frac{d-1}{c-1}(u_1-a)+b$.
Hence $(u_1-a,u_2+b)$ satisfies
(\ref{eq000}). This finishes the proof of claim (3).

Now we prove that Lemma \ref{lem1} holds for those $(v_1,v_2)$ satisfying (\ref{eq000}).
By claim (1), one has that $x_1^{v_1}x_2^{v_2}$ is a linear combination of $x_1^{v_1-a}x_2^{v_2}$
and $x_1^{v_1-a}x_2^{v_2+b}$ over $\mathbb{F}_q$. If $x_1^{v_1-a}x_2^{v_2}\in B$, it then follows from claim (2)
that $x_1^{v_1-a}x_2^{v_2+b}\in B$. Thus Lemma \ref{lem1} holds and the process stop.
If $x_1^{v_1-a}x_2^{v_2}\notin B$ and $x_1^{v_1-a}x_2^{v_2+b}\notin B$, then by claim (3) one has that $(v_1-a,v_2)$ and
$(v_1-a,v_2+b)$ satisfies (\ref{eq000}).
It follows from claim (1) that $x_1^{v_1-a}x_2^{v_2}$ is a linear combination of $x_1^{v_1-2a}x_2^{v_2}$ and $x_1^{v_1-2a}x_2^{v_2+b}$,
and $x_1^{v_1-a}x_2^{v_2+b}$ is a linear combination of $x_1^{v_1-2a}x_2^{v_2+b}$ and $x_1^{v_1-2a}x_2^{v_2+2b}$.
Then we check whether $x_1^{v_1-2a}x_2^{v_2}$, $x_1^{v_1-2a}x_2^{v_2+b}$, $x_1^{v_1-2a}x_2^{v_2+b}$ and $x_1^{v_1-2a}x_2^{v_2+2b}$
are in $B$ or not. If they are, then $x_1^{v_1-a}x_2^{v_2}$ and $x_1^{v_1-a}x_2^{v_2+b}$ are linear combinations of elements in $B$.
So the process stops and Lemma \ref{lem1} is proved. If there are some elements not in $B$, then
we repeat the process. Note that the exponents of $x_1$ decrease and exponents of $x_2$ increase. Hence the process will stop at most $[\frac{v_1}{a}-\frac{(c-1)v_2}{(d-1)a}]$ steps. This proves that Lemma \ref{lem1} is true in this case.
If $x_1^{v_1-a}x_2^{v_2}\notin B$ and $x_1^{v_1-a}x_2^{v_2+b}\in B$, then Lemma \ref{lem1} can be proved similarly.

This finishes the proof of Lemma \ref{lem1}.
\end{proof}

\begin{lem}\label{thm3}
Fix $\bar{\lambda}\in \mathbb{F}_q^{*}$.
For any $m,n\in \mathbb{Z}$, monomial $x_1^mx_2^n$ is a linear combination of elements in $B$ over $\mathbb{F}_q$ modulo $\sum_{l=1}^2D_{l}^{(\bar{\lambda})}\bar{R}^{(\bar{\lambda})}$.
\end{lem}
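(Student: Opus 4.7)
The plan is to extend Lemma \ref{lem1} to arbitrary $(m, n) \in \mathbb{Z}^2$ by using the operators $D_l^{(\bar{\lambda})}$ to transport $x_1^m x_2^n$ into the rectangle $(-c, a] \times (-d, b]$ of Lemma \ref{lem1}, after which that lemma finishes the job. Four elementary reductions are available. If $m > a$, applying $D_1^{(\bar{\lambda})}$ to $x_1^{m-a} x_2^n$ yields, modulo $\sum_{l=1}^{2} D_l^{(\bar{\lambda})} \bar{R}^{(\bar{\lambda})}$,
\[
x_1^m x_2^n \equiv -\frac{m-a}{a}\, x_1^{m-a} x_2^n + \frac{c\bar{\lambda}}{a}\, x_1^{m-a-c} x_2^{n-d}.
\]
If $m \le -c$, applying $D_1^{(\bar{\lambda})}$ to $x_1^{m+c} x_2^{n+d}$ expresses $x_1^m x_2^n$ as a linear combination of $x_1^{m+c} x_2^{n+d}$ and $x_1^{m+a+c} x_2^{n+d}$. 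Analogous moves via $D_2^{(\bar{\lambda})}$ handle $n > b$ (applied to $x_1^m x_2^{n-b}$) and $n \le -d$ (applied to $x_1^{m+c} x_2^{n+d}$). First I would iterate these four moves until every produced monomial lies inside the rectangle, and then conclude by Lemma \ref{lem1}.

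The main obstacle will be verifying termination of this rewriting. Each move fixes one coordinate while shifting the orthogonal coordinate by $\pm c$ or $\pm d$, so a careless iteration can repeatedly push a coordinate out of the rectangle just after the other is brought in. I would handle this with a double induction: the outer induction is on the weight $w(m, n)$ given by the Minkowski functional of $\Delta_{\infty}(\bar{f}, \mu)$, and the inner induction is on a secondary measure such as the $\ell^1$-distance from $(m, n)$ to the rectangle. Because the leading symbols $x_l \partial \bar{F}(\bar{\lambda}, x)/\partial x_l$ of $D_l^{(\bar{\lambda})}$ are homogeneous of weight $1$, each reduction splits into a top-symbol piece of weight $w$ (to be controlled by the inner induction) plus strictly lower-weight corrections (handled by the outer hypothesis). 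Since the graded piece $\bar{R}_w^{(\bar{\lambda})}$ is finite-dimensional by Theorem \ref{thm0} and the four moves realize precisely the relations coming from the top symbols, iterating within the weight-$w$ slice can visit only finitely many monomials before the $\ell^1$ descent drops us into the rectangle. A cleaner variant is to pair Move 1 with Move 4 (and Move 3 with Move 2) so that the spurious cross-shift in the orthogonal coordinate cancels, producing a monotone decrease in a single measure like $|m|/a + |n|/b$ plus a rectangle-distance penalty.

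Once termination is secured, the final step is immediate: Lemma \ref{lem1} rewrites each resulting in-rectangle monomial as an $\mathbb{F}_q$-linear combination of elements of $B$, completing the proof of Lemma \ref{thm3}. Combined with the dimension count $|B| = ad + ab + bc = 2!\, Vol(\Delta_{\infty}(\bar{f}, \mu))$, Lemma \ref{thm3} will then show that $B$ is in fact a basis of $\bar{R}^{(\bar{\lambda})}/\sum_{l=1}^{2} D_l^{(\bar{\lambda})} \bar{R}^{(\bar{\lambda})}$, which is the key ingredient needed for Theorem \ref{thm10}.
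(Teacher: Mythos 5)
Your four reduction moves are exactly the ones the paper uses, and the overall plan (transport $x_1^mx_2^n$ into the rectangle $(-c,a]\times(-d,b]$ and then invoke Lemma \ref{lem1}) is the paper's plan; the issue is that your termination argument, as stated, does not work. The inner measure of your double induction fails concretely: take $(a,b,c,d)=(2,3,1,5)$ (all the gcd hypotheses hold) and the monomial $x_1^{3}x_2^{-4}$. Move 1 rewrites it in terms of $x_1x_2^{-4}$ (weight drops by $1$) and the cross term $x_2^{-9}$. Both $(3,-4)$ and $(0,-9)$ lie in the cone over the face joining $(a,0)$ and $\mu$, where $w(v)=\tfrac{v_1}{a}-\tfrac{(a+c)v_2}{ad}$, so $w(3,-4)=w(0,-9)=\tfrac{27}{10}$: the cross term sits in the same weight slice, and its $\ell^1$-distance to the rectangle jumps from $1$ to $5$. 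Indeed, whenever $n\le 0$ the cross term $x_1^{m-a-c}x_2^{n-d}$ of Move 1 has the \emph{same} weight as $x_1^mx_2^n$, so the outer induction gives nothing and the proposed inner descent is simply false; Move 2 produces analogous same-weight outputs with non-decreasing distance. Finiteness of a weight slice does not by itself exclude cycling, so the argument "finitely many monomials per slice plus $\ell^1$ descent" has a genuine hole.

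Your "cleaner variant" is the right repair, and it is precisely what the paper does: whenever the cross term of Move 1 falls below the rectangle in the second coordinate (i.e. $n\le 0$), immediately rewrite it with $D_2^{(\bar\lambda)}$ applied to $x_1^{m-a}x_2^{n}$, so that the net effect of the paired move is $x_1^mx_2^n\mapsto$ combinations of $x_1^{m-a}x_2^{n}$ and $x_1^{m-a}x_2^{n+b}$, never leaving the strip $-d<n\le b$; the paper then inducts on the number of copies of $a$ in $m$ within that strip, treats $n>b$ symmetrically, and finally handles $m\le -c$ or $n\le -d$ by your Moves 2 and 4 alone, where termination is clear because both exponents strictly increase at each step. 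But in your write-up this variant is only a sketch: the single measure you float, $|m|/a+|n|/b$ "plus a rectangle-distance penalty," is unspecified and not checked (e.g. the paired move sends $x_1^m\mapsto x_1^{m-a}x_2^{b}$, leaving $|m|/a+|n|/b$ unchanged). So as it stands the proof is incomplete at its acknowledged crux; to finish it you should promote the pairing to the main argument and organize the reduction strip-by-strip as above, after which Lemma \ref{lem1} concludes exactly as you say.
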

\begin{proof}
We first prove Lemma \ref{thm3} is true for $m> -c$ and $-d<n\le b$. It follows from Lemma \ref{lem1} that $x_1^mx_2^n$ is a linear combination of
elements in $B$ when $-c<m\le 0$.
Hence we let $m>0$.
Write
$$m=m_1a+a_1 \ {\rm with}\ 0<a_1\le a.$$
It is obvious that $m_1\ge 0$. By Lemma \ref{lem1}, we conclude that Lemma \ref{thm3} is true when $m_1=0$.
Assume Lemma \ref{thm3} holds for all integers less than $m_1$.
In what follows we prove Lemma \ref{thm3} is true for the $m_1+1$ case.
Now suppose $m=(m_1+1)a+a_1$ with $0<a_1\le a$.
Let $D_{1}^{(\bar{\lambda})}$ act on $x_1^{m_1a+a_1}x_2^n$. Then one has
$$ D_{1}^{(\bar{\lambda})}(x_1^{m_1a+a_1}x_2^n)=(m_1a+a_1)x_1^{m_1a+a_1}x_2^n+a_1x_1^{(m_1+1)a+a_1}x_2^n-c\bar{\lambda} x_1^{m_1a+a_1-c}x_2^{n-d}.$$
Hence
$$ x_1^{(m_1+1)a+a_1}x_2^n\equiv \frac{1}{a_1}\Big(c\bar{\lambda} x_1^{m_1a+a_1-c}x_2^{n-d}-(m_1a+a_1)x_1^{m_1a+a_1}x_2^n\Big) \mod D_{1}^{(\bar{\lambda})}(x_1^{m_1a+a_1}x_2^n).$$

By the assumption, it is enough to show that $x_1^{m_1a+a_1-c}x_2^{n-d}$ is a linear combination of elements in $B$. In what follows,
we divide the proof into three cases.

{\sc Case 1.} $n\ge 1$ and $a_1>c$. Then $0<a_1-c<a$ and $-d<n-d<b$. By the hypothesis, monomial $x_1^{m_1a+a_1-c}x_2^{n-d}$ is a linear combination of
elements in $B$ modulo $\sum_{l=1}^2D_{l}^{(\bar{\lambda})}\bar{R}^{(\bar{\lambda})}$.

{\sc Case 2.} $n\ge 1$ and $a_1\le c$.  Clearly, $m_1a+a_1-c>-c$. If there exists a nonnegative integer $m_1'$ such that
$m_1a+a_1-c=m_1'a+a_1'$ with $0<a_1'\le a$, then $m_1'\le m_1$.
By the assumption, we conclude that $x_1^{m_1a+a_1-c}x_2^{n-d}$ is a linear combination of elements in $B$. If there is no such nonnegative integer,
then $-c<m_1a+a_1-c<a$. From Lemma \ref{lem1} we obtain that
$x_1^{m_1a+a_1-c}x_2^{n-d}$ is a linear combination of elements in $B$.

{\sc Case 3.} $n\le 0$. Using $D_{2}^{(\bar{\lambda})}$ to act on $x_1^{m_1a+a_1}x_2^{n}$, one obtains that
$$D_{2}^{(\bar{\lambda})}(x_1^{m_1a+a_1}x_2^{n})=nx_1^{m_1a+a_1}x_2^{n}+bx_1^{m_1a+a_1}x_2^{n+b}-d\bar{\lambda} x_1^{m_1a+a_1-c}x_2^{n-d}.$$
That is,
$$ x_1^{m_1a+a_1-c}x_2^{n-d}\equiv \frac{1}{d\bar{\lambda}}(nx_1^{m_1a+a_1}x_2^{n}+bx_1^{m_1a+a_1}x_2^{n+b}) \mod D_{2}^{(\bar{\lambda})}(x_1^{m_1a+a_1}x_2^{n}).$$
Note that $-d <n+b\le b$. It follows from the assumption that $x_1^{m_1a+a_1}x_2^{n+b}$ is a linear combination of
elements in $B$ over $\mathbb{F}_q$. So is $x_1^{m_1a+a_1-c}x_2^{n-d}$.

Hence Lemma \ref{thm3} holds for $m> -c$ and $-d<n\le b$.
Similar arguments can be used to prove Lemma \ref{thm3} holds for $m> -c$ and $n> -d$.
It remains to show the truth of Lemma \ref{thm3} for $m\le -c$ or $n\le -d$.
Without loss of generality, we just prove the case $m\le -c$. Then we use $D_{1}^{(\bar{\lambda})}$ to act on $ x_1^{m+c}x_2^{n+d}$. It follows that
$$D_{1}^{(\bar{\lambda})}(x_1^{m+c}x_2^{n+d})=(m+c)x_1^{m+c}x_2^{n+d}+ax_1^{m+c+a}x_2^{n+d}-c\bar{\lambda} x_1^{m}x_2^{n}.$$
That is,
\begin{equation}\label{eq23}
x_1^{m}x_2^{n}\equiv \frac{1}{c\bar{\lambda}}\big((m+c)x_1^{m+c}x_2^{n+d}+ax_1^{m+c+a}x_2^{n+d}\big)\ \mod D_{1}^{(\bar{\lambda})}(x_1^{m+c}x_2^{n+d}).
\end{equation}
If $m+c$ and $m+c+a$ are greater than $-c$, then $x_1^{m}x_2^{n}$ is a linear combination of elements in $B$.
The process stops and Lemma \ref{thm3} is true.
If $m+c\le -c$ or $m+c+a\le -c$, then we continue to use $D_{1}^{(\bar{\lambda})}$ to act on $x_1^{m+2c}x_2^{n+2d}$ or $x_1^{m+2c+a}x_2^{n+2d}$.
Observe that the exponent of monomials on the right hand side of (\ref{eq23}) is greater than the left hand side one. Hence after finite steps,
there exist monomials $x_1^{u_1}x_2^{u_2}$ with
$u_1> -c$ and $u_2>-d$ expressing $x_1^{m}x_2^{n}$ linearly.
Hence Lemma \ref{thm3} holds for all monomials $x_1^{m}x_2^{n}$ such that $m\le -c$.

This finishes the proof of Lemma \ref{thm3}.

\end{proof}

{\bf Remark.} The proofs of Lemma \ref{lem1} and \ref{thm3} actually
indicate a stronger result that for any $h\in\bar{R}^{(\bar{\lambda})}$, there are elements $h_1$ and $h_2$
with $h_1, h_2\in \bar{R}^{(\bar{\lambda})}$ such that  $h-\sum_{l=1}^2D_{l}^{(\bar{\lambda})}h_l$
is a linear combination of elements in $B$.

By Lemma \ref{thm3} and Theorem 3.2 of \cite{HS17}, one has the following generalization of Theorem \ref{thm10}.
\begin{thm} \label{thm6}
For $q=p^{\tilde{a}}$ with $\tilde{a}\in \mathbb{Z}_{\ge 0}$,
let $$\bar{B}_q:=\{\Lambda^{qm(v)}x_1^{v_1}x_2^{v_2}: x_1^{v_1}x_2^{v_2}\in B\}.$$
Then set $\bar{B}_q$ forms a basis for $H^2(\Omega^{\bullet}(\mathcal{C}_{0,q}, \nabla(D^{(\Lambda^q)}) ))$.
\end{thm}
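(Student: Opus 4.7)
The plan is to derive Theorem \ref{thm6} by combining the characteristic-$p$ reduction of Lemma \ref{thm3} with the $p$-adic structure result of Theorem \ref{thm2} via a rank count. Theorem \ref{thm2} tells us that $H^2(\Omega^\bullet(\mathcal{C}_{0,q}, \nabla(D^{(\Lambda^q)})))$ is a free $\mathcal{O}_{0,q}$-module of rank $N=ad+ab+bc$, so any $N$-element $\mathcal{O}_{0,q}$-spanning family is automatically a basis. The first task is therefore to verify $|B|=N$ by a direct lattice-point count: the rectangle $\{-c<v_1\le a,\ -d<v_2\le b\}$ contains $(a+c)(b+d)=ab+ad+bc+cd$ integer points, and the auxiliary inequalities in (\ref{eq02}) remove exactly $cd$ of them in each of the four cases. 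It then suffices to show that $\bar B_q$ spans $H^2$.

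For the spanning, I would first transport the reduction identities of Lemma \ref{lem1} and Lemma \ref{thm3} from the specialized graded ring $\bar R^{(\bar\lambda)}$ to the generic ring $T$ by replacing $\bar\lambda$ with the indeterminate $\Lambda$. The identities in the proof of Lemma \ref{lem1} are polynomial in $\bar\lambda$ with denominators drawn from $\{a,b,c,d,\bar\lambda\}$, all of which remain $p$-adic units since $p\nmid abcd$ and $\Lambda$ is invertible in $T$ and in $\mathcal{C}_{0,q}$. Using the analog with $\Lambda^q$ in place of $\Lambda$, this shows that $\bar B_q$ spans the reduction $\mathcal{C}_{0,q}/\tilde\pi\mathcal{C}_{0,q}$ modulo the images of $D_{1,\Lambda^q}$ and $D_{2,\Lambda^q}$.

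Finally, I would lift the mod-$\tilde\pi$ spanning to genuine $\mathcal{O}_{0,q}$-spanning in the Banach space $\mathcal{C}_{0,q}$. For any $\zeta\in\mathcal{C}_{0,q}$, the preceding step produces $\zeta\equiv\sum_{v\in B}a_v\Lambda^{qm(v)}x^v\pmod{\sum_l D_{l,\Lambda^q}\mathcal{C}_{0,q}+\tilde\pi\mathcal{C}_{0,q}}$; iterating the reduction on the remainder rescaled by $\tilde\pi^{-1}$ and summing the resulting $\tilde\pi$-adic Cauchy series yields an expression of $\zeta$ inside $\sum_{v\in B}\mathcal{O}_{0,q}\Lambda^{qm(v)}x^v + \sum_l D_{l,\Lambda^q}\mathcal{C}_{0,q}$. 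The main obstacle is ensuring convergence in the weight filtration $W_q$: one must check that each substitution in Lemma \ref{lem1} produces only terms whose weight is at least that of the input monomial, so that the iterated series actually lies in $\mathcal{C}_{0,q}$ rather than a larger formal completion. This weight-preservation is visible from the fact that the identities in Lemma \ref{lem1} are $w$-homogeneous, and is the same mechanism already exploited in the proofs of Theorems \ref{thm1} and \ref{thm2}. Together with the count $|B|=N$, this upgrades the spanning to a basis and concludes the proof.
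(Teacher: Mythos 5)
Your overall skeleton (the characteristic-$p$ reductions of Lemmas \ref{lem1} and \ref{thm3}, the rank-$N$ freeness from Theorem \ref{thm2}, and the count $|B|=N$) is the same one the paper relies on, but the crucial spanning step is justified by a claim that is false: $\Lambda$ is \emph{not} invertible in $T$, in $\mathcal{O}_{0,q}$, or in $\mathcal{C}_{0,q}$. By the definition (\ref{eq4}) of $\tilde{M}(\bar{F})$ and of $\mathcal{O}_{0,q}$, $\mathcal{C}_{0,q}$, only nonnegative powers of $\Lambda^{1/D}$ occur (the exponents satisfy $r\ge qm(v)\ge 0$), and $S=\mathbb{F}_q[\Lambda^{1/D}]$ is a polynomial, not Laurent, ring. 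This is not a cosmetic point, because the identities you want to transport genuinely divide by $\bar\lambda$: Case 3 in the proof of Lemma \ref{thm3} and the step (\ref{eq23}) for $m\le -c$ both have $\bar\lambda$ in the denominator. At the family level those divisions must be absorbed by the prefactors $\Lambda^{qm(v)}$ that define $\bar{B}_q$ and $\mathcal{C}_{0,q}$, and checking that the resulting $\Lambda$-exponents stay at least $q$ times the $m$-value of the new monomials requires the $m$-function bookkeeping (subadditivity as in Lemma \ref{lem3}, and identities such as $m(v+(c,d))=m(v)-1$ on the relevant subcones), none of which appears in your argument. A second, smaller slip: for integrality of the coefficients in $\mathcal{C}_{0,q}$ you need the reductions to produce monomials of weight at most (not ``at least'') that of the input, since rewriting $\pi^{w(u)}\Lambda^{qm(u)}x^u$ in terms of $\pi^{w(v)}\Lambda^{qm(v)}x^v$ introduces the factor $\pi^{w(u)-w(v)}$.

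For comparison, the paper sidesteps the family-level reduction entirely: Theorem \ref{thm2} (Theorem 3.2 of Haessig--Sperber) is stated for an \emph{arbitrary} monomial basis $B^{(\bar\lambda)}$ of the fiber cohomology and already asserts that the twisted monomials $\pi^{w(v)}\Lambda^{qm(v)}x^v$, $v\in\tilde{B}$, give the direct sum decomposition of $\mathcal{C}_{0,q}$. So the paper's proof is simply: Lemma \ref{thm3} — where division by the nonzero constant $\bar\lambda$ is harmless — shows that $B$ is a legitimate choice of fiber basis, and Theorem \ref{thm2} applied with this choice yields Theorem \ref{thm6} at once. Your cardinality claim $|B|=N$ (asserted rather than proved, but correct, and indeed needed to upgrade fiberwise spanning to a basis) is a worthwhile supplement. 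To make your more hands-on route rigorous you would have to carry out the $m$- and $w$-bookkeeping described above; otherwise the clean fix is to route the argument through Theorem \ref{thm2} exactly as the paper does.
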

Clearly, Theorem \ref{thm10} is the special case $q=1$.

\subsection{Newton polygon and Hodge polygon}
Let $f=\sum a_jx^{V_j}$ be a nondegenerate Laurent polynomial with $n$ variables over $\mathbb{F}_q$.
Let ${\rm Supp}(f):=\{V_j: a_j \neq 0\}$.
Assume $\Delta=\Delta(f)$ is the convex closure of ${\rm Supp} (f)$ and the origin.
Let $S_{\Delta}:=\mathbb{F}_q[x^{Cone(\Delta)\cap \mathbb{Z}^n}]$.
Let
$$H_{\Delta}(i):=\dim_{\mathbb{F}_q}(S_{\Delta}/\sum_{l=1}^n D_{l,f}S_{\Delta})_i,$$
the dimension of the graded degree $i$ part of $S_{\Delta}/\sum_{l=1}^n D_{l,f}S_{\Delta}$, where $D_{l,f}=x_l\frac{\partial}{\partial x_l}+x_l\frac{\partial f}{\partial x_l}$.
Let $w$ be the weight function on $Cone(\Delta)$ and $\tilde{D}$ be the least positive integer such that $w(u)\in (1/\tilde{D})\mathbb{Z}_{\ge 0}$ for all $u\in Cone(\Delta)\cap \mathbb{Z}^n$.
The {\it Hodge polygon} $HP(\Delta)$ of $\Delta$
is the lower convex polygon in $\mathbb{R}^2$ with vertices
$$\Big(\sum_{k=0}^{m}H_{\Delta}(k),\sum_{k=0}^{m}\frac{k}{\tilde{D}}H_{\Delta}(k) \Big), m=0,1,...,n\tilde{D}.$$
It follows from \cite{AS89} that
$$NP(f)\ge HP(\Delta).$$
The polynomial $f$ is called {\it ordinary} if $NP(f)= HP(\Delta)$.

\begin{thm}\label{thm8}(Facial decomposition theorem, \cite{WD1}) Let $f$ be a nondegenerate Laurent polynomial with $n$ variables over $\mathbb{F}_q$.
Assume $\Delta=\Delta(f)$ is $n$-dimensional and $\Delta_1,\cdots,\Delta_h$ are all the codimension 1 faces of $\Delta$
which do not contain the origin. Let $f^{\Delta_i}$ denote the restriction of $f$ to $\Delta_i$. Then $f$ is ordinary if and only if
$f^{\Delta_i}$ is ordinary for $1\le i\le h$.
\end{thm}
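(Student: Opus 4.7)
The plan is to show that both $NP(f)$ and $HP(\Delta)$ admit additive decompositions indexed by the codimension-one faces $\Delta_1,\dots,\Delta_h$, and then combine these with the general inequality $NP(f)\ge HP(\Delta)$ of Adolphson--Sperber to conclude. First, on the Hodge side, because $\Delta$ is $n$-dimensional the punctured cone $Cone(\Delta)\setminus\{0\}$ stratifies as a disjoint union of relative interiors of cones over faces of $\Delta$ not containing $0$. Under nondegeneracy of $f$, this stratification descends to a graded direct-sum decomposition of $S_{\Delta}/\sum_{l}D_{l,f}S_{\Delta}$, with each codimension-one piece $\Delta_i$ contributing independently to the Hodge numbers $H_\Delta(k)$. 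Summing face-by-face gives $HP(\Delta)=HP(\Delta_1)+\cdots+HP(\Delta_h)$, where the sum denotes slope-concatenation of convex polygons.

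Next, for the Newton side I would choose a $p$-adic basis of Dwork's cohomology that is compatible with the same facial stratification, and show that the matrix of Frobenius is block-triangular with respect to the filtration by the weight function and by face. Since the Newton polygon of a block-triangular matrix lies above the concatenation of the Newton polygons of its diagonal blocks, this produces
$$NP(f)\;\ge\;NP(f^{\Delta_1})+\cdots+NP(f^{\Delta_h}).$$
Combining with the face-by-face lower bound $NP(f^{\Delta_i})\ge HP(\Delta_i)$ yields the chain
$$NP(f)\;\ge\;\sum_{i=1}^{h}NP(f^{\Delta_i})\;\ge\;\sum_{i=1}^{h}HP(\Delta_i)\;=\;HP(\Delta),$$
from which the equivalence follows: if $f$ is ordinary, the two ends coincide and every intermediate inequality collapses, forcing each $f^{\Delta_i}$ to be ordinary; conversely, if every $f^{\Delta_i}$ is ordinary, then the two ends are already equal and $f$ itself must be ordinary.

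The main obstacle is the Newton polygon decomposition, since Frobenius genuinely mixes monomials supported on different faces. The key is to choose a ``star'' basis in which the off-diagonal entries have strictly larger $p$-adic valuation than the entries within a given diagonal block of that weight, exploiting the fact that monomials in the relative interior of a cone over a smaller face carry strictly larger weight than those on the containing facet. This is where the detailed combinatorics of the star decomposition of $Cone(\Delta)$ meet Dwork's trace formula, and it is the technical heart of Wan's original argument.
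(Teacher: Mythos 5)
The paper itself gives no proof of this statement---it is quoted verbatim from Wan \cite{WD1}---so the only meaningful comparison is with Wan's argument, and your proposal does not reconstruct it: its first step is false. The claimed decomposition $HP(\Delta)=HP(\Delta_1)+\cdots+HP(\Delta_h)$ (concatenation of slopes) fails already for the classical Kloosterman sum: take $n=1$, $f=x+\lambda x^{-1}$, so $\Delta=[-1,1]$ with facets $\Delta_1=\{1\}$ and $\Delta_2=\{-1\}$. Then $HP(\Delta)$ has slopes $\{0,1\}$, whereas $\Delta(f^{\Delta_i})=\mathrm{conv}(\Delta_i\cup\{0\})$ has Hodge polygon consisting of the single slope $0$, so the concatenation is $\{0,0\}$; even the right endpoints (total masses $1$ versus $0$) disagree. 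The failure comes from the walls shared by the closed facial cones: the inclusion--exclusion defining $H_{\Delta}(k)$ is not additive over these cones, so the graded quotient $S_{\Delta}/\sum_l D_{l,f}S_{\Delta}$ does not split face by face as you assert. Once $\sum_i HP(\Delta_i)=HP(\Delta)$ is gone, your chain $NP(f)\ge\sum_i NP(f^{\Delta_i})\ge\sum_i HP(\Delta_i)$ no longer ends at $HP(\Delta)$, and the squeeze collapses in both directions.

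There are two further problems even if one granted your inequalities. First, the ``if'' direction is logically broken: ordinarity of every $f^{\Delta_i}$ would only say that your two lower bounds for $NP(f)$ coincide, and agreeing lower bounds can never force $NP(f)$ to attain them; this direction, the substantive half of the theorem, needs an upper bound, i.e. a proof that suitable Frobenius minors are $p$-adic units times the minimal power of $p$. Second, the block-triangularity you invoke is unsupported: there is no filtration ordering the facial cones that Frobenius respects (it mixes monomials across facial cones in both directions), and the weight heuristic is wrong, since $w$ is linear on each closed facial cone, so a monomial on a wall has the same weight computed from either adjacent facet. Wan's actual proof operates at the chain level: nondegeneracy reduces ordinarity of $f$ to the statement that the weight-ordered principal minors of the infinite Fredholm--Frobenius matrix have exactly the predicted valuation, and these truncated minors are shown to factor modulo $p$ into facial contributions because the minimal-valuation entries attached to lattice points in the cone over $\Delta_i$ involve only the coefficients of $f^{\Delta_i}$. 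Any salvage of your strategy has to be carried out on that chain-level determinant, not on the cohomological Newton and Hodge polygons.
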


In what follows, we introduce some criteria to determine the nondegenerate and ordinary property.

A Laurent polynomial $f\in \mathbb{F}_q[x_1^{\pm},\cdots,x_n^{\pm}]$ is called {\it diagonal} if $f$ has exactly $n$ non-constant terms and $\Delta(f)$ is $n$-dimensional.
Let $f(x)=\sum_{j=1}^na_jx^{V_j}$ with $a_j\in \mathbb{F}_q^{*}$.
The square matrix of $\Delta$ is defined to be
$$\mathbf{M}(\Delta)=(V_1,\cdots,V_n),$$
where each $V_j$ is written as a column vector.
If $f$ is diagonal, then $\det\mathbf{M}(\Delta)\neq0$.

\begin{prop}\label{prop1} Suppose $f\in \mathbb{F}_q[x_1^{\pm},\cdots,x_n^{\pm}]$ is diagonal with $\Delta(f)$. Then $f$ is nondegenerate if and only if
$\gcd(p,\det \mathbf{M}(\Delta))=1$.
\end{prop}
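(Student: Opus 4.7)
\emph{Proof plan.} The polytope $\Delta(f)=\mathrm{conv}(0,V_1,\dots,V_n)$ is an $n$-simplex, since the hypothesis $\dim\Delta=n$ is equivalent to $V_1,\dots,V_n$ being $\mathbb{Q}$-linearly independent. Consequently, every face of $\Delta$ not containing the origin has the form $\tau_J:=\mathrm{conv}\{V_j:j\in J\}$ for some nonempty $J\subseteq\{1,\dots,n\}$, and $f^{\tau_J}=\sum_{j\in J}a_jx^{V_j}$. Making the substitution $y_j=a_jx^{V_j}\in\bar{\mathbb{F}}_q^{*}$, the critical-point equations $x_l\,\partial_l f^{\tau_J}=0$ for $l=1,\dots,n$ become the linear system
\[
\sum_{j\in J}V_{j,l}\,y_j=0\qquad(l=1,\dots,n),
\]
whose coefficient matrix is the $n\times|J|$ submatrix $\mathbf{M}(\Delta)_J$ of $\mathbf{M}(\Delta)$ obtained by selecting the columns indexed by $J$. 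Thus nondegeneracy on $\tau_J$ is equivalent to this linear system having only the zero solution.

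For the forward direction, assume $p\nmid\det\mathbf{M}(\Delta)$. Then the $n$ columns $V_1,\dots,V_n$ of $\mathbf{M}(\Delta)$ remain linearly independent over $\bar{\mathbb{F}}_q$ after reduction mod $p$, and any sub-collection $\{V_j\}_{j\in J}$ is still independent. The system above then forces $y_j=0$ for every $j\in J$, contradicting $y_j=a_jx^{V_j}\in\bar{\mathbb{F}}_q^{*}$. Hence $f^{\tau_J}$ has no torus critical point for any $J$, and $f$ is nondegenerate.

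For the converse, assume $p\mid\det\mathbf{M}(\Delta)$, so that $V_1,\dots,V_n$ are $\bar{\mathbb{F}}_q$-linearly dependent modulo $p$. Choose a \emph{minimal} nonempty $J\subseteq\{1,\dots,n\}$ for which $\{V_j\}_{j\in J}$ admits a nontrivial relation $\sum_{j\in J}y_jV_j\equiv 0\pmod{p}$; minimality forces every coefficient $y_j$ to be nonzero (otherwise dropping the zero entries gives a strictly smaller dependent subset). Since $V_1,\dots,V_n$ are a $\mathbb{Q}$-basis of $\mathbb{Q}^n$ and $\bar{\mathbb{F}}_q^{*}$ is divisible, the monomial equations $x^{V_j}=y_j/a_j$ for $j\in J$ can be solved for some $x\in(\bar{\mathbb{F}}_q^{*})^n$. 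This $x$ is then a torus critical point of $f^{\tau_J}$, so $f$ is degenerate.

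The only genuinely nontrivial point is the extraction of a \emph{minimal} dependent column subset in the converse: this is exactly what converts the singularity of $\mathbf{M}(\Delta)\bmod p$ into a dependence relation all of whose coefficients are nonzero, which in turn is what allows the monomial system $x^{V_j}=y_j/a_j$ to be realized by an actual point of the torus. Everything else reduces to the combinatorial fact that faces of the simplex $\Delta(f)$ away from the origin correspond to subsets of $\{V_1,\dots,V_n\}$ and that critical-point equations on such a face are governed by the linear dependencies of the corresponding columns of $\mathbf{M}(\Delta)$.
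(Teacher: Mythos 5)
Your proof is correct. Note that the paper does not actually prove this proposition: it is quoted as the standard nondegeneracy criterion for diagonal Laurent polynomials from Wan's diagonal local theory (the companion Proposition on ordinarity is cited to Wan), so there is no in-paper argument to compare against; what you wrote is essentially the standard proof, supplied in full. The two points that genuinely require care are both handled properly: the identification of the faces of $\Delta$ avoiding the origin with nonempty subsets $J\subseteq\{1,\dots,n\}$ is legitimate because your hypothesis makes $\Delta$ a simplex with vertex set $\{0,V_1,\dots,V_n\}$, and the substitution $y_j=a_jx^{V_j}$ turns nondegeneracy on $\tau_J$ into the statement that the columns $\{V_j \bmod p\}_{j\in J}$ admit no dependence relation with all coefficients nonzero. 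In the converse, the passage to a \emph{minimal} dependent subset (forcing all coefficients nonzero) and the surjectivity of $x\mapsto(x^{V_j})_{j\in J}$ on $(\bar{\mathbb{F}}_q^{*})^n$ — which follows from $\det\mathbf{M}(\Delta)\neq0$ together with divisibility of $\bar{\mathbb{F}}_q^{*}$, e.g. via Smith normal form — are exactly the right ingredients, and your statement of them is accurate.
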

Let $S(\Delta)$ be the solution set of the following linear system
$$\mathbf{M}(\Delta)\cdot(r_1,r_2,...,r_n)^t \equiv 0 \pmod 1, r_i\in \mathbb{Q}\cap [0,1).$$
Then $S(\Delta)$ is an abelian group and its order is given by $|\det \mathbf{M}(\Delta)|$.
By the fundamental structure of finite abelian group, $S(\Delta)$ can be decomposed into a direct product of invariant factors
$$S(\Delta)=\bigoplus_{i=1}^n \mathbb{Z}/d_i\mathbb{Z},$$
where $d_i|d_{i+1}$ for $i=1,2,...,n-1$. Then Wan proved the following ordinary criterion.
\begin{prop}\label{prop2}\cite{WD2} Suppose $f\in \mathbb{F}_q[x_1^{\pm},\cdots,x_n^{\pm}]$ is a nondegenerate diagonal Laurent polynomial with
$\Delta(f)$. Let $d_n$ be the largest invariant factor of $S(\Delta)$. If $p\equiv 1 \mod d_n$, then $f$ is ordinary at $p$.
\end{prop}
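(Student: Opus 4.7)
The plan is to exploit the diagonal structure of $f$ to produce an explicit monomial basis of the top cohomology indexed by the abelian group $S(\Delta)$, and then to show that under the congruence $p\equiv 1\pmod{d_n}$ the chain-level Frobenius becomes diagonal in this basis with $p$-adic valuations matching the Hodge weights exactly.

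First, I would construct the basis. Write $f(x)=\sum_{j=1}^n a_j x^{V_j}$ with $\mathbf{M}(\Delta)=(V_1,\dots,V_n)$ nonsingular by Proposition \ref{prop1}. For each solution $(r_1,\dots,r_n)\in S(\Delta)$, the vector $u=\mathbf{M}(\Delta)\cdot(r_1,\dots,r_n)^t$ is a lattice point in $\mathrm{Cone}(\Delta)\cap\mathbb{Z}^n$, and one checks using the operators $D_{l,f}$ that the monomials $\{x^u:(r_1,\dots,r_n)\in S(\Delta)\}$ descend to a basis of $S_\Delta/\sum_{l}D_{l,f}S_\Delta$ (mirroring the reduction techniques used in Lemmas \ref{lem1} and \ref{thm3}). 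The cardinality matches $|\det\mathbf{M}(\Delta)|=2!\mathrm{Vol}(\Delta)$, so we recover the expected rank. Moreover, the weight function satisfies $w(u)=r_1+\cdots+r_n$, so the Hodge numbers $H_\Delta(i)$ are recovered as the cardinality of $\{(r_1,\dots,r_n)\in S(\Delta):\sum_j r_j=i/\tilde{D}\}$, and $HP(\Delta)$ becomes the polygon built from the multiset $\{r_1+\cdots+r_n\}_{(r_j)\in S(\Delta)}$.

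Next, I would analyze the action of Frobenius on this basis. Using Dwork's splitting function $\theta$ and the Teichmüller lift of $f$, the chain-level Frobenius $\alpha$ expands as a product of $\theta$-factors times $\psi_p$. On a basis monomial $x^u$ associated to $(r_1,\dots,r_n)\in S(\Delta)$, the action of $\psi_p$ sends $x^u$ to $x^{u/p}$ whenever $u/p$ is integral; otherwise one must reduce using the $D_{l,f}$-relations. The hypothesis $p\equiv 1\pmod{d_n}$ forces $p\equiv 1\pmod{d_i}$ for every invariant factor, hence multiplication by $p$ acts as the identity on $S(\Delta)$. Consequently the map $(r_1,\dots,r_n)\mapsto p\cdot(r_1,\dots,r_n)\bmod 1$ fixes each point, which means that the leading term of Frobenius applied to the basis element indexed by $(r_j)$ lies again in the span of the same basis element, modulo strictly higher weight contributions. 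In matrix terms, Frobenius is upper triangular (in fact diagonal up to higher-order error) with respect to any ordering of the basis compatible with the weight.

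Finally, I would compute the diagonal entries. Each one is a product involving the coefficients $\lambda_i$ of $\theta$, and its $p$-adic valuation can be evaluated using the estimate $\mathrm{ord}_p\lambda_i\geq (p-1)i/p^2$ combined with Gross-Koblitz-type identification of the leading term as a product of Gauss sums. One shows this valuation equals $r_1+\cdots+r_n$ precisely, which matches the weight of the corresponding basis vector. Therefore $NP(f)$ coincides term-by-term with $HP(\Delta)$, proving ordinarity. The main obstacle is the third step: rigorously establishing the diagonal (or triangular) form of Frobenius under the congruence condition and identifying the diagonal entries with Gauss sums whose valuations match the weights. This requires delicate control of the Teichmüller lifts, the action of $\psi_p$ on fractional lattice points, and the $p$-adic cancellations inside $\theta$, but once achieved the ordinarity follows immediately from the Newton-polygon-of-a-diagonal-matrix observation.
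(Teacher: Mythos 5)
First, a point of comparison: the paper itself offers no proof of this statement. Proposition \ref{prop2} is imported verbatim from Wan \cite{WD2}, where it is a consequence of his diagonal local theory, so the only meaningful benchmark is Wan's argument. Your first step is consistent with it: for a diagonal nondegenerate $f$ the monomials $x^u$ with $u=\mathbf{M}(\Delta)\cdot(r_1,\dots,r_n)^t$, $(r_1,\dots,r_n)\in S(\Delta)$, do represent a basis of the top cohomology, the weight is $w(u)=r_1+\cdots+r_n$ because $w$ is linear on the cone with $w(V_j)=1$, and the Hodge polygon is built from the multiset $\{\sum_j r_j\}$. That part is fine.

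The gap is in your second and third steps, which is where the entire content of the proposition lives. The chain-level Frobenius is $\psi_p$ composed with multiplication by the splitting series, so its value on a basis monomial is an infinite series, not "$x^{u/p}$ when $u/p$ is integral''; what one actually has are valuation lower bounds on all matrix entries, and "diagonal up to strictly higher weight'' is not by itself enough to compute the Newton polygon: the polygon comes from the Fredholm determinant, whose coefficients are sums of minors, so you must show (i) each diagonal entry has valuation \emph{exactly} equal to the corresponding weight, and (ii) every other permutation term in the relevant minor has strictly larger valuation, so no cancellation can raise the polygon above the Hodge bound. Point (i) is precisely where $p\equiv1\pmod{d_n}$ enters -- it forces $(p-1)r_j\in\mathbb{Z}$, so the attached Gauss sums have slope exactly $\sum_j r_j$ by Stickelberger/Gross--Koblitz -- and you defer exactly this identification, together with (ii), to "delicate control''; as written this is the whole proof, not a technical remainder. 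Wan's actual route avoids the cohomological matrix altogether: for diagonal $f$ the exponential sum decomposes exactly into products of Gauss sums indexed by $S(\Delta)$, grouped into orbits of multiplication by $p$; the slope attached to an orbit is the average of $\sum_j\{p^ir_j\}$ over the orbit, and when $p\equiv1\pmod{d_n}$ every orbit is a singleton, giving slope $\sum_j r_j=w(u)$ and hence $NP=HP$. If you want to keep your cohomological framing, you must supply the exact evaluation of the diagonal entries and the strict off-diagonal estimates; otherwise it is cleaner to argue via the character-sum decomposition as in \cite{WD1}, \cite{WD2}.
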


We come back to our case $\bar{F}(\Lambda,x)=x_1^a+x_2^b+\frac{\Lambda}{x_1^cx_2^d}$.
By (\ref{eq27}), the Newton polygon of $L(\bar{F}(\bar{\lambda},x), T)^{-1}$ and its lower bound are independent of the choice of $\bar{\lambda}$.
Without loss of generality, we fix $\bar{\lambda}\in \mathbb{F}_q^{*}$.
Now we use Wan's theorems to prove Theorem \ref{thm9}.
\begin{proof}
There are three codimension 1 faces of $\Delta$
which do not contain the origin, denoted by $\Delta_1,\Delta_2,\Delta_3$.
Then let $\bar{F}^{\Delta_1}=x_1^a+x_2^b$, $\bar{F}^{\Delta_2}=x_2^b+\frac{\bar{\lambda}}{x_1^cx_2^d}$
and $\bar{F}^{\Delta_3}=x_1^a+\frac{\bar{\lambda}}{x_1^cx_2^d}$.
We compute $|\det \mathbf{M}(\Delta_1)|=ab$,$|\det \mathbf{M}(\Delta_2)|=bc$ and $|\det \mathbf{M}(\Delta_3)|=ad$.
Note that $p\nmid abcd$. It then follows from Proposition \ref{prop1} that $\bar{F}^{\Delta_1}$, $\bar{F}^{\Delta_2}$ and $\bar{F}^{\Delta_3}$
are nondegenerate.

Recall that $\gcd(a,b)=\gcd(b,c)=1$. If $\gcd(a,d)=1$, $p\equiv 1 \mod ab[c,d]$, then by Proposition \ref{prop2} one has $NP(\bar{F}^{\Delta_i})=HP(\Delta_i)$ for $i=1,2,3$.
It follows from Theorem \ref{thm8} that $NP(\bar{F})=HP(\Delta(\bar{F}))$ at $p$.
This finishes the proof of Theorem \ref{thm9}.
\end{proof}

{\bf Remark:} Generally, the Hodge polygon $HP(\Delta(\bar{F}))$
can be obtained by computing the weights of monomials in set $B$.
The following is how to compute the Hodge polygon.
Let $u=(u_1, u_2)$ such that $-c< u_1\le a$ and $-d<u_2\le b$.
If $u$ satisfies that $0<u_1\le a$ and $0\le u_2\le b$, then let
$$w(u)=\frac{u_1}{a}+\frac{u_2}{b}.$$
If $u$ such that $\frac{d-1}{c-1}(u_1-a)\le u_2\le \frac{du_1}{c}$ and $u_2<0$, then we let
$$w(u)=\frac{u_1}{a}-\frac{(a+c)u_2}{ad}.$$
If $\frac{du_1}{c}<u_2<\frac{d-1}{c-1}u_1+b$ and $u_1\le 0$, then let
$$w(u)=\frac{u_2}{b}-\frac{(b+d)u_1}{bc}.$$
Counting the number of $u$ whose weights are the same, then we denote $H_{\Delta}(k)=card\big\{u: w(u)=\frac{k}{ab[c,d]}\big\}$.
The Hodge polygon is the lower convex polygon in $\mathbb{R}^2$ with vertices
$$\Big(\sum_{k=0}^{m}H_{\Delta}(k),\sum_{k=0}^{m}\frac{k}{ab[c,d]}H_{\Delta}(k) \Big), m=0,1,...,2ab[c,d].$$

Hence under the condition of Theorem \ref{thm9}, we can compute the Newton polygon of the $L$-function $L(\bar{F}(\bar{\lambda},x),T)^{-1}$.
Especially, we have the following.
\begin{cor}Let $c=d=1$ and $p\equiv 1 \mod ab$. Then the slope sequence of $L(\bar{F}(\bar{\lambda},x),T)^{-1}$ is $\{\frac{ai+bj}{ab}\}_{i=0,...,b,j=0,...,a}$.
\end{cor}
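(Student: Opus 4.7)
The plan is to combine Theorem \ref{thm9} with an explicit computation of the Hodge polygon that is already made available by Theorem \ref{thm10}. When $c=d=1$ the hypothesis $\gcd(a,d)=1$ is automatic and $ab[c,d]=ab$, so the assumption of Theorem \ref{thm9} reduces to exactly $p\equiv 1\pmod{ab}$. Thus the theorem yields $NP(\bar F)=HP(\Delta(\bar F))$ at $p$, and the task is just to read off the slopes of the Hodge polygon from the basis in Theorem \ref{thm10}.

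Next I would specialize the description of the basis set $B$ in (\ref{eq02}) to the case $c=d=1$: the fourth clause gives
\[
B=\{x_1^{v_1}x_2^{v_2}\,:\,0\le v_1\le a,\ 0\le v_2\le b,\ (v_1,v_2)\neq(0,b)\},
\]
whose cardinality $(a+1)(b+1)-1=ab+a+b$ equals $N$, as it must. By Theorem \ref{thm10}, the corresponding $\bar B$ is a basis of $H^2(\Omega^{\bullet}(\mathcal{C}_0,\nabla(D^{(\Lambda)})))$. Since every $v\in B$ satisfies $v_1,v_2\ge 0$, each such $v$ lies in $Cone(\bar f)$, so only formula (\ref{eq1}) applies and
\[
w(v)=l_{\sigma}(v)=\frac{v_1}{a}+\frac{v_2}{b}=\frac{bv_1+av_2}{ab}.
\]
The Hodge polygon is then the lower convex polygon in $\mathbb{R}^2$ whose slope multiset is $\{w(v):v\in B\}$; reindexing by $v_1=j$ and $v_2=i$ rewrites this multiset as $\{(ai+bj)/(ab)\}_{i=0,\dots,b;\,j=0,\dots,a}$ (with the pair corresponding to $(0,b)$ excluded, accounting for the fact that the slope $1$ contributes one copy rather than two).

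There is no real obstacle: the work is essentially bookkeeping once Theorem \ref{thm9} provides ordinariness and Theorem \ref{thm10} provides the explicit basis. The only subtlety to double-check is the multiplicity of the slope $1$: under $\gcd(a,b)=1$ the integer solutions of $bv_1+av_2=ab$ in the box $0\le v_1\le a,\ 0\le v_2\le b$ are exactly $(a,0)$ and $(0,b)$, and the removal of the single point $(0,b)$ from $B$ is precisely what makes the slope $1$ appear with the correct multiplicity in the final list.
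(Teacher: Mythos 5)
Your proposal is correct and follows essentially the same route as the paper: invoke Theorem \ref{thm9} (whose hypotheses reduce to $p\equiv 1\pmod{ab}$ when $c=d=1$) to get $NP(\bar F)=HP(\Delta(\bar F))$, then read the slopes off as the weights $\frac{bv_1+av_2}{ab}$ of the $ab+a+b$ basis monomials in $B$, with the exclusion of $(0,b)$ handling the multiplicity of slope $1$ exactly as the paper does via $\gcd(a,b)=1$.
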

\begin{proof}
It follows from Theorem \ref{thm9} that $NP(\bar{F})=HP(\Delta(\bar{F}))$.
The weight of $x_1^ix_2^j$ is $(aj+bi)/ab$.
Since $\gcd(a,b)=1$, those $ab+a+b$ points $\{aj+bi\}_{i=0,...,a,j=0,...,b}-\{aj+bi\}_{i=0,j=b}$ are different with each other.
 Thus the slope sequence of $L(f,T)^{-1}$ is $\{\frac{ai+bj}{ab}\}_{i=0,...,b,j=0,...,a}$.
 \end{proof}

\section{Differential equation}

\subsection{Introduction to the GKZ system}

The theory of GKZ system was established by Gelfand, Kapranov and
Zeleviasky as a generalisation of hypergeometric differential equations \cite{GKZ90, GKZ91, GZK89, GZK93}.
In this part, we just give a short introduction to the GKZ system as far as we need it.

Let $\mathcal{A}=\{a_1,\cdots,a_m\}\subset \mathbb{Z}^n$ be a collection of $m>n$ points lying on an integral affine hyperplane.
Denote $a_i$ by $a_i:=(a_{i1},\cdots, a_{in})^t$.
Let $\alpha=(\alpha_1,\cdots,\alpha_n)\in \mathbb{C}^n$ be an arbitrary vector.
Then $$\mathbb{L}:=\{(l_1,\cdots,l_m)\in \mathbb{Z}^m: l_1a_1+\cdots+l_ma_m=0,a_i\in \mathcal{A}\}$$
denotes the lattice of linear relations among $\mathcal{A}$ .

We define the {\it GKZ system} (sometimes also called $\mathcal{A}$ system) for $\mathcal{A}$ and
$\alpha$ to be a system of differential equations for functions $\Phi$
of $m$ variables $v_1,\cdots,v_m$ given by
\begin{equation}\label{eq5}
\prod_{l_i>0}\Big( \frac{\partial }{\partial v_i}\Big)^{l_i}\Phi= \prod_{l_i<0}\Big( \frac{\partial }{\partial v_i}\Big)^{-l_i}\Phi\ {\rm for\ every}\ l\in \mathbb{ L}
\end{equation}
and
\begin{equation}\label{eq6}
\sum_{i=1}^ma_{ij}v_i \frac{\partial \Phi}{\partial v_i}=\alpha _j\Phi\ {\rm for\ all}\ j=1,\cdots,n\ {\rm and}\ (a_{i1},\cdots,a_{in})\in \mathcal{A}.
\end{equation}

\subsection{Differential equation}

We start with calculating the GKZ system for the one-parameter family $\bar{F}(\Lambda,x)=x_1^a+x_2^b+\frac{\Lambda}{x_1^cx_2^d}$.
Define a general 3-parameter family
$$f_{\bar{v}}(x)=f_{v_1,v_2,\Lambda}(x):=v_1x_1^a+v_2x_2^b+\frac{\Lambda}{x_1^cx_2^d}$$
with parameters $v_1,v_2\in \mathbb{Z}_q$ and $\Lambda$.
Hence we calculate the GKZ system for $$\mathcal{A}=\{(a,0)^t,(0,b)^t,(-c,-d)^t\}$$
and $\alpha=(0,0)^t$.
Let $A$ be the matrix with column vectors in $\mathcal{A}$. Then $A$ is a $2\times 3$ matrix and $\mathbb{L}$ is 1-dimensional.
We solve
\begin{equation*}
{\left( \begin{array}{ccc}
a& 0& -c\\
0& b& -d
\end{array}
\right )}
{\left( \begin{array}{c}
q_1\\
q_2\\
q_3
\end{array}
\right )}= {\left( \begin{array}{c}
0\\
0
\end{array}
\right )}
\end{equation*}
to obtain that
$$\mathbb{L}=\langle(q_1,q_2,q_3)\rangle=\langle(bc,ad,ab)\rangle.$$
Replace operators $\frac{\partial }{\partial \Lambda}$, $\frac{\partial }{\partial v_1}$ and $\frac{\partial }{\partial v_2}$ in
equation (\ref{eq5}) by
$D'_{\Lambda} $, $D'_{1}$ and $D'_{2}$, respectively, where
$$D'_{\Lambda}=\frac{\partial}{\partial \Lambda}+\frac{1}{x_1^cx_2^d},\ D'_{1}=\frac{\partial}{\partial v_1}+x_1^a\ {\rm and}\ D'_{2}=\frac{\partial}{\partial v_2}+x_2^b.$$
Then equation (\ref{eq5}) for lattice $\mathbb{L}$ is
\begin{equation}\label{eq7}
  \Big(D'_{1} \Big)^{bc} \Big(D'_{2} \Big)^{ad} \Big(D'_{\Lambda} \Big)^{ab}\Phi =\Phi.
\end{equation}

Recall that $D_{\Lambda}=\Lambda D'_{\Lambda}$.
Let $D_1:=v_1D'_1$ and $D_2:=v_2D'_2$.
Then $D'_{\Lambda}=\Lambda^{-1} D_{\Lambda}$,
$D'_1=v_1^{-1} D_1$ and $D'_2=v_2^{-1}D_2 $.
Hence (\ref{eq7}) can be written as
\begin{equation}\label{eq8}
(v_1^{-1}D_1)^{bc}(v_2^{-1}D_2)^{ad}(\Lambda^{-1}D_{\Lambda})^{ab}\Phi=\Phi.
\end{equation}
Note that for $j\in \mathbb{Z}$,
$$D_{\Lambda}\cdot\Lambda^j=\Lambda^jD_{\Lambda}+j\Lambda^j$$
 and
$$D_i\cdot v_i^j=v_i^jD_i+jv_i^j, i=1,2.$$
Changing the order of $D_1$ and $v_1^{-1}$, $D_2$ and $v_2^{-1}$, $D_{\Lambda}$ and $\Lambda$, then (\ref{eq8}) is equal to
\begin{equation}\label{eq9}
v_1^{-bc} (D_1-(bc-1))\cdots D_1 v_2^{-ad} (D_2-(ad-1))\cdots D_2 \Lambda^{-ab} (D_{\Lambda}-(ab-1))\cdots D_{\Lambda}\Phi=\Phi.
\end{equation}

The second equation (\ref{eq6}) of the GKZ system given by $\alpha=(0,0)^t$ is as follows
\begin{equation}\label{eq10}
{\left( \begin{array}{ccc}
a& 0& -c\\
0& b& -d
\end{array}
\right )}
{\left( \begin{array}{c}
D_1\\
D_2\\
D_{\Lambda}
\end{array}
\right )}\Phi= {\left( \begin{array}{c}
0\\
0
\end{array}
\right ).}
\end{equation}
Hence
\begin{align*}
\left\{
  \begin{array}{ll}
  aD_1&=cD_{\Lambda}\\
  bD_2&=dD_{\Lambda}.
  \end{array}
\right.
\end{align*}
Replacing $D_1$ and $D_2$ in (\ref{eq9}) by $\frac{cD_{\Lambda}}{a}$ and $\frac{dD_{\Lambda}}{b}$, respectively, and letting $v_1=v_2=1$, one gets that
\begin{equation}\label{eq24}
 \Big(\frac{cD_{\Lambda}}{a}-(bc-1) \Big)\cdots \frac{cD_{\Lambda}}{a}  \Big(\frac{dD_{\Lambda}}{b}-(ad-1) \Big)\cdots \frac{dD_{\Lambda}}{b} \Big(D_{\Lambda}-(ab-1) \Big)\cdots D_{\Lambda}\Phi= \Lambda^{ab} \Phi.
\end{equation}

For $f_{\bar{v}}(x)$, we define
 $$D_{1,\Lambda}:=x_1\frac{\partial}{\partial x_1}+av_1x_1^a-\frac{c\Lambda }{x_1^cx_2^d}$$
and $$D_{2,\Lambda}:=x_2\frac{\partial}{\partial x_2}+bv_2x_2^b-\frac{d\Lambda }{x_1^cx_2^d}.$$
Now we check that $\Phi=\bar{1}=1+\sum_{l=1}^2D_{l,\Lambda}(\mathcal{C}_0)$ is a solution of the GKZ system.
\begin{lem}\label{lem2}
The form $\Phi=\bar{1}$ is a solution of equations (\ref{eq7}) and (\ref{eq10}).
\end{lem}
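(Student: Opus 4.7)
The plan is to verify both GKZ identities by direct computation on the representative $1$ of $\bar{1}$, exploiting the exponential conjugation presentation of $D'_{\Lambda},D'_{1},D'_{2}$. Writing $f:=f_{\bar v}(x)=v_1x_1^a+v_2x_2^b+\Lambda/(x_1^cx_2^d)$, the partial derivatives $\partial_{\Lambda}f=x_1^{-c}x_2^{-d}$, $\partial_{v_1}f=x_1^a$, $\partial_{v_2}f=x_2^b$ are monomials in $x$ alone, so a direct verification gives
\[
D'_{\Lambda}=e^{-f}\partial_{\Lambda}e^{f},\qquad D'_{i}=e^{-f}\partial_{v_{i}}e^{f}\quad(i=1,2).
\]
These three operators therefore pairwise commute, and any iterated composition collapses to $e^{-f}(\text{corresponding iterated partial})e^{f}$.

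For equation (\ref{eq7}), the above identity applied to $1$ yields
\[
(D'_{1})^{bc}(D'_{2})^{ad}(D'_{\Lambda})^{ab}(1)=e^{-f}\,\partial_{v_{1}}^{bc}\partial_{v_{2}}^{ad}\partial_{\Lambda}^{ab}(e^{f}).
\]
Factor $e^{f}=e^{v_1x_1^a}\cdot e^{v_2x_2^b}\cdot e^{\Lambda/(x_1^cx_2^d)}$. Each factor is an eigenfunction of exactly one of $\partial_{\Lambda},\partial_{v_1},\partial_{v_2}$, with eigenvalues $x_1^{-c}x_2^{-d}, x_1^a, x_2^b$ respectively, so the three iterated partials successively pull out the constant $(x_1^{-c}x_2^{-d})^{ab}\cdot x_2^{abd}\cdot x_1^{abc}=1$, leaving $e^{f}$. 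Hence the expression equals $e^{-f}e^{f}=1$, and (\ref{eq7}) holds as a literal equality on $1\in\mathcal{C}_{0}$, not merely modulo $\sum_{l}D_{l,\Lambda}(\mathcal{C}_{0})$.

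For the linear equations (\ref{eq10}), which expand to $aD_{1}\Phi=cD_{\Lambda}\Phi$ and $bD_{2}\Phi=dD_{\Lambda}\Phi$, the point is that the defect between the two sides, applied to $1$, lies precisely in the submodule being quotiented out. Indeed, since $x_1\partial_{x_1}(1)=0$,
\[
aD_{1}(1)-cD_{\Lambda}(1)=av_{1}x_{1}^{a}-\frac{c\Lambda}{x_{1}^{c}x_{2}^{d}}=D_{1,\Lambda}(1),
\]
so $aD_{1}\bar{1}=cD_{\Lambda}\bar{1}$ in the quotient. The symmetric computation $bD_{2}(1)-dD_{\Lambda}(1)=D_{2,\Lambda}(1)$ handles the second identity.

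There is no genuine obstacle here; the argument is pure bookkeeping. The only conceptual point worth flagging is that the two types of GKZ identity hold in slightly different senses: the lattice identity (\ref{eq7}) holds already at the level of $\mathcal{C}_{0}$, while the homogeneity identities (\ref{eq10}) hold only after passing to $H^{2}$, with explicit correction terms $D_{l,\Lambda}(1)$ in the discarded submodule.
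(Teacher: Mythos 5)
Your proof is correct and follows essentially the same route as the paper: the paper likewise verifies (\ref{eq7}) by showing the iterated operators applied to the representative $1$ return exactly $1$ (it does this by the stepwise computation $(D'_{\Lambda})^{ab}(1)=(x_1^cx_2^d)^{-ab}$, $(D'_1)^{bc}$, $(D'_2)^{ad}$ rather than your $e^{-f}\partial e^{f}$ eigenfunction repackaging, which is only a cosmetic difference), and it verifies (\ref{eq10}) by computing $aD_1(1)$, $bD_2(1)$, $D_{\Lambda}(1)$ and observing the discrepancies are exactly $D_{1,\Lambda}(1)$ and $D_{2,\Lambda}(1)$, i.e.\ congruences modulo $\sum_l D_{l,\Lambda}(\mathcal{C}_0)$, just as you do.
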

\begin{proof}
By the definition of $D'_{\Lambda} $, $D'_{1}$ and $D'_{2}$,
one has
$$D'_{\Lambda}(1)=\frac{1}{x_1^cx_2^d},\cdots, (D'_{\Lambda})^{ab}(1)=\big(\frac{1}{x_1^cx_2^d}\big)^{ab}, $$
$$(D_1')^{bc}\big(\frac{1}{x_1^{abc}x_2^{abd}}\big)=\frac{1}{x_2^{abd}}, (D'_{2})^{ad}\big(\frac{1}{x_2^{abd}}\big)=1.$$
By the communication of $D'_{\Lambda} $, $D'_{1}$ and $D'_{2}$ with
$D_{l,\Lambda}$ for $l=1,2$, we conclude that $\Phi=\bar{1}$ satisfies equation (\ref{eq7}).

We compute that
$$aD_1(1)=av_1x_1^a,\ bD_2(1)=bv_2x_2^b \ {\rm and}\ D_{\Lambda}(1)=\frac{\Lambda}{x_1^cx_2^d}.$$
Hence
\begin{align*}
\left\{
  \begin{array}{ll}
  aD_1(1)\equiv cD_{\Lambda}(1) &\mod D_{1,\Lambda}(1), \\
  bD_2(1)\equiv dD_{\Lambda}(1) &\mod D_{2,\Lambda}(1).
  \end{array}
\right.
\end{align*}
That is, $\Phi=\bar{1}$ is a solution of equation (\ref{eq10}).
This finishes the proof of Lemma \ref{lem2}.
\end{proof}
Let $v_1=v_2=1$. Thus $\Phi=\bar{1}$ is a solution of (\ref{eq24}).
To show Theorem \ref{thm11}, it is enough to prove the following result.
\begin{lem}\label{lem6}If $D_{\Lambda}^n(1)$ is a linear combination of $D_{\Lambda}^{n-1}(1),\cdots,D_{\Lambda}(1),1$ over $\mathcal{O}_0$ modulo
$\sum_{l=1}^2D_{l,\Lambda}(\mathcal{C}_0)$, then
$n\ge ab+bc+ad$.
\end{lem}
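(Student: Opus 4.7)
\textbf{Proof proposal for Lemma \ref{lem6}.}

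The plan is to prove the equivalent statement that $\{[1], [D_\Lambda(1)], \ldots, [D_\Lambda^{N-1}(1)]\}$ is an $\mathcal{O}_0$-basis of $H^2 := H^2(\Omega^{\bullet}(\mathcal{C}_0, \nabla(D^{(\Lambda)})))$; since $H^2$ is free of rank $N = ab + ad + bc$ by Theorem \ref{thm5}, this linear independence immediately forces $n \geq N$ in any alleged relation $D_\Lambda^n(1) \equiv \sum_{i=0}^{n-1} c_i D_\Lambda^i(1) \pmod{\sum_l D_{l,\Lambda}(\mathcal{C}_0)}$.

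First, I would compute $D_\Lambda^n(1)$ in closed form. Setting $u := \Lambda x_1^{-c} x_2^{-d}$, the identity $D_\Lambda(u^k) = k u^k + u^{k+1}$ gives by induction $D_\Lambda^n(1) = \sum_{k=1}^n S(n,k)\, u^k$, where $S(n,k)$ are the Stirling numbers of the second kind with $S(n,n) = 1$. The change-of-basis matrix from $(u^0, u^1, \ldots, u^{N-1})$ to $(1, D_\Lambda(1), \ldots, D_\Lambda^{N-1}(1))$ is lower triangular with unit diagonal, so the problem reduces to showing that the classes $[u^0], [u^1], \ldots, [u^{N-1}]$ are $\mathcal{O}_0$-linearly independent in $H^2$.

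Second, I would reduce each $u^k = \Lambda^k x_1^{-kc} x_2^{-kd}$ into the basis $\bar B$ by iterating the algorithm in the proofs of Lemmas \ref{lem1} and \ref{thm3}. Writing out $D_{l,\Lambda}(g)$ one sees that each reduction step either translates the exponent by $(a,0)$ or $(0,b)$ (lowering the $\Lambda$-power by one via the cross term $-c\Lambda x_1^{-c} x_2^{-d}$, resp.~$-d\Lambda x_1^{-c} x_2^{-d}$), or leaves the exponent fixed while producing a lower-weight correction. Iterating from $u^k$ one thereby obtains a finite expansion $[u^k] = \sum_{b \in \bar B} c_{k, b}(\Lambda)\, b$ with computable coefficients $c_{k, b} \in \mathcal{O}_0$. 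The core technical step is the construction of a ``leading-pair'' map $k \mapsto (\phi(k), \nu(k)) \in \bar B \times \mathbb{Z}_{\geq 0}$ on $\{0, 1, \ldots, N-1\}$ such that the $\Lambda^{\nu(k)}$-coefficient of $c_{k, \phi(k)}$ is a $p$-adic unit while the analogous coefficient of $c_{k', \phi(k)}$ vanishes for all $k' < k$. Combinatorially, $(\phi(k), \nu(k))$ arises as the $k$-th site visited by the lattice walk ``multiply by $u$ and reduce into $B$'', and one expects the walk to hit exactly $N$ distinct pairs before wrapping, reflecting the partition $N = ab + ad + bc$ of $\bar B$ according to the three sub-cones $\mathrm{Cone}(\bar f)$, $\mathrm{Cone}((a,0),\mu)$, $\mathrm{Cone}((0,b),\mu)$ of $\mathrm{Cone}(\bar f, \mu)$.

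Once the leading-pair data is in hand, a standard triangular argument shows the $\mathcal{O}_0$-linear independence of $[u^0], \ldots, [u^{N-1}]$, and the lemma follows. The main obstacle lies in this combinatorial step: the reduction algorithm branches according to which of the three sub-cones the current exponent falls into, and the four cases of (\ref{eq02}) (depending on whether $c$ and $d$ exceed $1$) must be handled uniformly. The length $N$ of the walk matches $2!\mathrm{Vol}(\Delta_\infty(\bar f, \mu))$ precisely because the orbit exhausts each of the three sub-triangles $\Delta(\bar f)$, $C((a,0),\mu)$-triangle, and $C((0,b),\mu)$-triangle in turn, mirroring the facial decomposition used in Theorem \ref{thm0}.
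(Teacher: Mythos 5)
Your overall strategy is legitimate: proving that $[1],[D_{\Lambda}(1)],\dots,[D_{\Lambda}^{N-1}(1)]$ are $\mathcal{O}_0$-linearly independent in $H^2$ would indeed give the lemma, and your preliminary reductions are correct (the identity $D_{\Lambda}(u^k)=ku^k+u^{k+1}$ with $u=\Lambda x_1^{-c}x_2^{-d}$, the Stirling-number expansion $D_{\Lambda}^n(1)=\sum_k S(n,k)u^k$, and the unitriangular change of basis reducing everything to the independence of $[u^0],\dots,[u^{N-1}]$). But the proof stops exactly where the real content begins: the ``leading-pair'' map $k\mapsto(\phi(k),\nu(k))$, the unit/vanishing triangularity of the coefficients $c_{k,\phi(k)}$, and the claim that the walk visits $N$ distinct sites before wrapping are all asserted with ``one expects'' rather than proved. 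This is not a routine verification that can be waved through: the reduction of $u^k$ into $\bar B$ branches according to which sub-cone the current exponent lies in, produces several terms at each step, and nothing in your sketch rules out cancellations or an early return of the walk to a previously visited exponent class. Tellingly, your argument never invokes the hypotheses $\gcd(a,b)=\gcd(a,c)=\gcd(b,d)=1$, yet these are precisely what must prevent the walk from closing up after fewer than $ab+bc+ad$ steps; a proof that does not use them cannot be complete.

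For comparison, the paper avoids your global independence statement altogether and argues more economically: it tracks only the top monomial $u^n=\Lambda^n x_1^{-nc}x_2^{-nd}$ of $D_{\Lambda}^n(1)$ through the reduction by $D_{1,\Lambda}$ and $D_{2,\Lambda}$, showing that after $s$ steps of the first kind and $t$ of the second the surviving leading term is $x_1^{sa}x_2^{tb}\Lambda^{n-s-t}x_1^{-(n-s-t)c}x_2^{-(n-s-t)d}$ (up to terms expressible in lower powers of $D_{\Lambda}$ applied to $1$). A relation expressing $D_{\Lambda}^n(1)$ in lower powers then forces this $x$-exponent to match that of some $u^j$ with $0\le j\le n-1$, i.e.\ $(n-s-t)c=cj+sa$ and $(n-s-t)d=dj+tb$; the coprimality assumptions solve this Diophantine system and give $n=ab+bc+ad+j\ge ab+bc+ad$. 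If you want to salvage your route, the missing combinatorial step would have to be established by essentially this same exponent-matching argument, so you should either carry it out in full (making the coprimality hypotheses do visible work) or switch to the single-leading-monomial argument, which is shorter.
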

\begin{proof}
For $i=1,\cdots,n$, we have
$$D_{\Lambda}^i(1)=\frac{\Lambda^i}{x_1^{ic}x_2^{id}}+{\rm lower\ order\ terms \ of}\ \frac{\Lambda}{x_1^{c}x_2^{d}}.$$
Monomial $\frac{\Lambda^n}{x_1^{nc}x_2^{nd}}$ can be reduced as follows
\begin{align*}
D_{1,\Lambda}\Big( \frac{\Lambda^{n-1}}{x_1^{(n-1)c}x_2^{(n-1)d}}\Big)=&\frac{m_1\Lambda^{n-1}}{x_1^{(n-1)c}x_2^{(n-1)d}}+\frac{ax_1^a\Lambda^{n-1}}{x_1^{(n-1)c}x_2^{(n-1)d}}-\frac{\Lambda^{n}}{x_1^{nc}x_2^{nd}}\\
D_{1,\Lambda}\Big( \frac{x_1^a\Lambda^{n-2}}{x_1^{(n-2)c}x_2^{(n-2)d}}\Big)=&\frac{m_2x_1^a\Lambda^{n-2}}{x_1^{(n-2)c}x_2^{(n-2)d}}+
\frac{ax_1^{2a}\Lambda^{n-2}}{x_1^{(n-2)c}x_2^{(n-2)d}}-\frac{x_1^a\Lambda^{n-1}}{x_1^{(n-1)c}x_2^{(n-1)d}}\\
\cdots\cdots\\
D_{1,\Lambda}\Big( \frac{x_1^{(s-1)a}\Lambda^{n-s}}{x_1^{(n-s)c}x_2^{(n-s)d}}\Big)=&\frac{m_sx_1^{(s-1)a}\Lambda^{n-s}}{x_1^{(n-s)c}x_2^{(n-s)d}}+
\frac{ax_1^{sa}\Lambda^{n-s}}{x_1^{(n-s)c}x_2^{(n-s)d}}-\frac{x_1^{(s-1)a}\Lambda^{n-s+1}}{x_1^{(n-s+1)c}x_2^{(n-s+1)d}}\\
D_{2,\Lambda}\Big( \frac{x_1^{sa}\Lambda^{n-s-1}}{x_1^{(n-s-1)c}x_2^{(n-s-1)d}}\Big)=& \frac{m_{s+1}x_1^{sa}\Lambda^{n-s-1}}{x_1^{(n-s-1)c}x_2^{(n-s-1)d}}+
\frac{bx_1^{sa}x_2^{b}\Lambda^{n-s-1}}{x_1^{(n-s-1)c}x_2^{(n-s-1)d}}-\frac{x_1^{sa}\Lambda^{n-s}}{x_1^{(n-s)c}x_2^{(n-s)d}}\\
\cdots\cdots\\
D_{2,\Lambda}\Big( \frac{x_1^{sa}x_2^{(t-1)b}\Lambda^{n-s-t}}{x_1^{(n-s-t)c}x_2^{(n-s-t)d}}\Big)=&\frac{m_{s+t}x_1^{sa}x_2^{(t-1)b}\Lambda^{n-s-t}}{x_1^{(n-s-t)c}x_2^{(n-s-t)d}}+
\frac{bx_1^{sa}x_2^{tb}\Lambda^{n-s-t}}{x_1^{(n-s-t)c}x_2^{(n-s-t)d}}\\
&-\frac{x_1^{sa}x_2^{(t-1)b}\Lambda^{n-s-t+1}}{x_1^{(n-s-t+1)c}x_2^{(n-s-t+1)d}}.
\end{align*}
Adding above equations together, one has
$$\frac{\Lambda^{n}}{x_1^{nc}x_2^{nd}}\equiv \frac{m_{s+t}'x_1^{sa}x_2^{tb}\Lambda^{n-s-t}}{x_1^{(n-s-t)c}x_2^{(n-s-t)d}}+{\rm sums \ of}\ \frac{x_1^{as'}x_2^{bt'}\Lambda^{n-s'-t'-1}}{x_1^{(n-s'-t'-1)c}x_2^{(n-s'-t'-1)d}} \mod \sum_{l=1}^2D_{l,\Lambda}(\mathcal{C}_0),$$
where $m_{s+t}'$ is a constant, $(s',t')$ such that $s'\le s$ and $t'\le t-1$.
It can be checked that $\frac{x_1^{as'}x_2^{bt'}\Lambda^{n-s'-t'-1}}{x_1^{(n-s'-t'-1)c}x_2^{(n-s'-t'-1)d}}$
can be linearly expressed by $D_{\Lambda}^{n-1}(1),\cdots,D_{\Lambda}(1),1$.
It follows that if $D_{\Lambda}^n(1)$ is a linear combination of $D_{\Lambda}^{n-1}(1),\cdots,D_{\Lambda}(1),1$, then
$\frac{x_1^{sa}x_2^{tb}\Lambda^{n-s-t}}{x_1^{(n-s-t)c}x_2^{(n-s-t)d}}$ should equal to $\frac{\Lambda^j}{x_1^{cj}x_2^{dj}}$ for some integer $0\le j\le n-1$. Hence
$$(n-s-t)c=cj+sa, (n-s-t)d=dj+tb.$$
Recall that $\gcd(a,b)=\gcd(a,c)=\gcd(d,b)=1$.
Then $n=bc+ad+ab+j$.
Thus $n\ge ab+bc+ad$. This finishes the proof of Lemma \ref{lem6}.
\end{proof}
Hence Theorem \ref{thm11} holds.
It follows that set $\{\bar{1},D_{\Lambda}(\bar{1}),\cdots, D_{\Lambda}^{N-1}(\bar{1})\}$ consists a basis of
$H^2(\Omega^{\bullet}(\mathcal{C}_0,\nabla(D^{(\Lambda)})))$. The connection map is given as follows.
\begin{cor}\label{thm4}
The action of $D_{\Lambda}$ on the basis $\{\bar{1},D_{\Lambda}(\bar{1}),\cdots, D_{\Lambda}^{N-1}(\bar{1})\}$ is given by
\begin{equation*}
D_{\Lambda}\big(\bar{1},D_{\Lambda}(\bar{1}),\cdots,D_{\Lambda}^{N-1}(\bar{1})\big)^t=G^t(\bar{1},D_{\Lambda}(\bar{1}),\cdots,D_{\Lambda}^{N-1}(\bar{1}))^t,
\end{equation*}
where \begin{equation*}
G^t:=
{\left( \begin{array}{cccc}
0& 1& \cdots&0\\
0& 0& 1 \cdots&0\\
\vdots& \vdots& & \vdots\\
a_0(\Lambda)& a_1(\Lambda)&\cdots &a_{N-1}(\Lambda)
\end{array}
\right )}
\end{equation*} and $a_0(\Lambda),a_1(\Lambda),\cdots, a_{N-1}(\Lambda)$ is determined by (\ref{eq01}).
\end{cor}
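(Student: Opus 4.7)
The plan is to deduce this corollary directly from Theorem \ref{thm11} together with Lemma \ref{lem6}. Those two results together identify $\{\bar{1}, D_{\Lambda}(\bar{1}), \ldots, D_{\Lambda}^{N-1}(\bar{1})\}$ as a basis for $H^{2}(\Omega^{\bullet}(\mathcal{C}_0,\nabla(D^{(\Lambda)})))$, so the only task is to express $D_{\Lambda}$ applied to each basis vector in terms of this basis and read off the columns of $G$ (equivalently, the rows of $G^{t}$).

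First, I would dispatch rows $1$ through $N-1$ by the trivial observation that for every $0 \le i \le N-2$ one has
\begin{equation*}
D_{\Lambda}\bigl(D_{\Lambda}^{i}(\bar{1})\bigr) = D_{\Lambda}^{i+1}(\bar{1}),
\end{equation*}
which is already a basis element. This places a single $1$ in the $(i{+}1)$-st slot of the $i$-th row and zeros elsewhere, producing the shift structure shown in the statement of the corollary.

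For the last row, the plan is to apply both sides of the operator identity (\ref{eq01}) in Theorem \ref{thm11} to the class $\bar{1}$. Expanding the left hand side as a polynomial in $D_{\Lambda}$ of degree exactly $N = ab + bc + ad$ yields an expression of the form $\ell_{N}\,D_{\Lambda}^{N} + \ell_{N-1}(\Lambda)\,D_{\Lambda}^{N-1} + \cdots + \ell_{0}(\Lambda)$, with leading coefficient $\ell_{N} = c^{bc}d^{ad}/(a^{bc}b^{ad})$. Because $p \nmid abcd$, this leading coefficient is a unit in $\mathcal{O}_{0}$, so we can solve for $D_{\Lambda}^{N}(\bar{1})$ in cohomology, obtaining
\begin{equation*}
D_{\Lambda}^{N}(\bar{1}) \;=\; \ell_{N}^{-1}\Bigl(\Lambda^{ab}\,\bar{1} - \sum_{i=0}^{N-1}\ell_{i}(\Lambda)\,D_{\Lambda}^{i}(\bar{1})\Bigr) \;=\; \sum_{i=0}^{N-1} a_{i}(\Lambda)\,D_{\Lambda}^{i}(\bar{1}),
\end{equation*}
and the coefficients $a_{i}(\Lambda) \in \mathcal{O}_{0}$ are then fully determined by (\ref{eq01}). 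This provides the last row of $G^{t}$.

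The only non-routine point in the plan is the division step, i.e.\ checking that the leading coefficient $\ell_{N}$ of the ordinary-differential polynomial on the left of (\ref{eq01}) is indeed a $p$-adic unit; this is immediate from the hypothesis $p \nmid abcd$, so no genuine obstacle arises. Everything else is bookkeeping: reading off the shift matrix from the basis and converting the single polynomial identity of Theorem \ref{thm11} into the final row of the companion-type matrix $G^{t}$.
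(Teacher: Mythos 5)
Your proposal is correct and follows essentially the same route as the paper, which states Corollary \ref{thm4} as an immediate consequence of Theorem \ref{thm11}: once $\{\bar{1},D_{\Lambda}(\bar{1}),\cdots,D_{\Lambda}^{N-1}(\bar{1})\}$ is known to be a basis (via Lemma \ref{lem6}), the shift rows are trivial and the last row comes from solving (\ref{eq01}) for $D_{\Lambda}^{N}(\bar{1})$, the division being harmless since the leading coefficient $c^{bc}d^{ad}/(a^{bc}b^{ad})$ is a $p$-adic unit because $p\nmid abcd$.
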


\subsection{Frobenius structure}

In the theory of Dwork, Frobenius structure on the differential equation arises in the dual theory, so we introduce the dual theory first.

Let $\mathfrak{C}_0$ be the free $\mathcal{O}_0$-module with basis $\{\pi^{w(v)}\Lambda^{m(v)}x^{v}\}_{v\in M(\bar{f},\mu)}$.
Let $\mathfrak{C}_{0}^{*}$ be the set of all formal power series with coefficients in $\mathcal{O}_0$ over monomials $\{\pi^{-w(v)}\Lambda^{-m(v)}x^{-v}\}_{v\in M(\bar{f},\mu)}$, that is
$$\mathfrak{C}_{0}^{*}:=\Big\{ \sum_{v\in M(\bar{f},\mu)} \xi^{*}(v)\pi^{-w(v)}\Lambda^{-m(v)}x^{-v}: \xi^{*}(v)\in \mathcal{O}_0  \Big\}.$$

For monomial $\frac{1}{\pi^{w(v)}\Lambda^{m(v)}x^v}\in \mathfrak{C}_{0}^{*} $ and $\pi^{w(u)}\Lambda^{m(u)}x^u\in \mathfrak{C}_0$,
the product
\begin{align*}
\frac{\pi^{w(u)}\Lambda^{m(u)}x^u}{\pi^{w(v)}\Lambda^{m(v)}x^v}=\frac{1}{\pi^{w(v)-w(u)}\Lambda^{m(v)-m(u)}x^{v-u}}.
\end{align*}
The total weight function $W$ satisfies that $W(m(v)+m(u);v+u)\le W(m(u);u)+ W(m(v);v)$.
Recall that $W(m(v);v)=w(v)$ and $W_{\Lambda}(\Lambda^r)=r(1-l_{\sigma}(\mu))$.
Replacing $v$ by $v-u$, one has that
$$(m(v-u)+m(u)-m(v))(1-l_{\sigma}(\mu))+w(v)\le w(v-u)+w(u).$$
Thus there exists $s\in \mathbb{R}_{\ge 0}$ such that
\begin{align*}
\frac{\pi^{w(u)}\Lambda^{m(u)}x^u}{\pi^{w(v)}\Lambda^{m(v)}x^v}&=
\frac{\pi^s}{\pi^{(m(v)-m(u)-m(v-u))(1-l_{\sigma}(\mu))+w(v-u) }\Lambda^{m(v)-m(u)}x^{v-u} }\\
&=\frac{\pi^s\pi^{W_{\Lambda}(\Lambda^{m(u)+m(v-u)-m(v)})}\Lambda^{m(u)+m(v-u)-m(v) }}{ \pi^{w(v-u)}\Lambda^{m(v-u)}x^{v-u}}.
\end{align*}
It follows from Lemma \ref{lem3} that $m(v)\le m(u)+m(v-u)$.
Thus $\frac{\pi^{w(u)}\Lambda^{m(u)}x^u}{\pi^{w(v)}\Lambda^{m(v)}x^v}\in \mathfrak{C}_{0}^{*}$, which means the product is well defined.
Hence for $\zeta^{*}\in \mathfrak{C}_{0}^{*}$, $\zeta\in \mathfrak{C}_0$, the product $\zeta^{*}\cdot\zeta$ is well defined.
So we define
$$\langle\zeta^{*},\zeta\rangle:={\rm constant\ term\ of}\ \zeta^{*}\cdot\zeta.$$
Since $\pi^{w(v)}\Lambda^{m(v)}x^v\frac{1}{\pi^{w(v)}\Lambda^{m(v)}x^v}=1$, one has that
$$ \langle, \rangle: \mathfrak{C}_{0}^{*}\times \mathfrak{C}_0\rightarrow \mathcal{O}_0.$$

\begin{lem}\label{lem4}
Under the pairing $\langle, \rangle$, $\mathfrak{C}_{0}^{*}$ is dual to $\mathfrak{C}_0$.
\end{lem}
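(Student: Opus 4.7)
The plan is to exhibit an explicit isomorphism between $\mathfrak{C}_0^{*}$ and the $\mathcal{O}_0$-module of continuous $\mathcal{O}_0$-linear functionals on $\mathfrak{C}_0$, given by $\xi^{*}\mapsto\langle\xi^{*},\cdot\rangle$. The key observation is that the basis $\{\pi^{w(v)}\Lambda^{m(v)}x^{v}\}_{v\in M(\bar{f},\mu)}$ of $\mathfrak{C}_0$ and the family $\{\pi^{-w(v)}\Lambda^{-m(v)}x^{-v}\}_{v\in M(\bar{f},\mu)}$ indexing $\mathfrak{C}_0^{*}$ should be dual bases under the pairing.

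First I would verify the dual-basis property by direct computation. For $u,v\in M(\bar{f},\mu)$, the calculation preceding the lemma (which used Lemma \ref{lem3} to guarantee $m(u)+m(v-u)-m(v)\ge 0$) shows
\begin{equation*}
\pi^{-w(u)}\Lambda^{-m(u)}x^{-u}\cdot \pi^{w(v)}\Lambda^{m(v)}x^{v}
= \pi^{s}\pi^{W_{\Lambda}(\Lambda^{m(u)+m(v-u)-m(v)})}\Lambda^{m(u)+m(v-u)-m(v)}\cdot\frac{1}{\pi^{w(v-u)}\Lambda^{m(v-u)}x^{v-u}}
\end{equation*}
with $s\ge 0$. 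For $v\ne u$ the exponent $v-u$ is nonzero, so the constant term of the product vanishes; for $v=u$ every factor reduces to $1$. This yields the Kronecker delta
$\langle \pi^{-w(u)}\Lambda^{-m(u)}x^{-u},\pi^{w(v)}\Lambda^{m(v)}x^{v}\rangle=\delta_{u,v}$.

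Next I would use bilinearity to obtain a closed form. For $\xi^{*}=\sum_{v}\xi^{*}(v)\pi^{-w(v)}\Lambda^{-m(v)}x^{-v}\in\mathfrak{C}_0^{*}$ and $\zeta=\sum_{v}\zeta(v)\pi^{w(v)}\Lambda^{m(v)}x^{v}\in\mathfrak{C}_0$, one formally has $\langle\xi^{*},\zeta\rangle=\sum_{v\in M(\bar{f},\mu)}\xi^{*}(v)\zeta(v)$. To make this rigorous the series must converge in $\mathcal{O}_0$; since the $\xi^{*}(v)$ are bounded in $\mathcal{O}_0$ and the $\zeta(v)\to 0$ as $w(v)\to\infty$ by definition of $\mathfrak{C}_0$, the general term tends to $0$, and ultrametric completeness of $\mathcal{O}_0$ gives convergence.

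Finally, the map $\phi:\mathfrak{C}_0^{*}\to \mathrm{Hom}_{\mathcal{O}_0}^{\mathrm{cont}}(\mathfrak{C}_0,\mathcal{O}_0)$ sending $\xi^{*}$ to $\langle\xi^{*},\cdot\rangle$ is seen to be an isomorphism. Injectivity follows by evaluating at the basis element $\pi^{w(v)}\Lambda^{m(v)}x^{v}$, which recovers $\xi^{*}(v)$. For surjectivity, given a continuous $\mathcal{O}_0$-linear functional $\ell$, set $\xi^{*}(v):=\ell(\pi^{w(v)}\Lambda^{m(v)}x^{v})\in\mathcal{O}_0$; these values are automatically bounded (the basis vectors have norm $1$), so $\xi^{*}$ lies in $\mathfrak{C}_0^{*}$, and $\phi(\xi^{*})$ coincides with $\ell$ on the dense subspace of finite combinations of basis vectors, hence everywhere by continuity. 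The main obstacle I anticipate is the careful bookkeeping showing that the general product $\zeta^{*}\cdot\zeta$ for $\zeta^{*}\in\mathfrak{C}_0^{*}$, $\zeta\in\mathfrak{C}_0$ is well-defined term-by-term in $\mathfrak{C}_0^{*}$ (so that extracting a constant term makes sense) and that the convergence issues are compatible with the filtration by $w(v)$ and the subadditivity property of $m$.
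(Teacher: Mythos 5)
Your proposal is correct and takes essentially the same route as the paper: the map $\xi^{*}\mapsto\langle\xi^{*},\cdot\rangle$ into the $\mathcal{O}_0$-linear functionals, injectivity by pairing against the monomial basis $\pi^{w(v)}\Lambda^{m(v)}x^{v}$, and surjectivity by setting $\xi^{*}(v):=\ell(\pi^{w(v)}\Lambda^{m(v)}x^{v})$ and checking the resulting series lies in $\mathfrak{C}_0^{*}$. The only difference is that the paper treats $\mathfrak{C}_0$ as the free $\mathcal{O}_0$-module of finite combinations of the basis monomials, so the continuity, convergence, and density considerations you add (reading $\mathfrak{C}_0$ as a completed Banach-type space) are unnecessary, though harmless.
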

\begin{proof}
For $\zeta^{*}\in \mathfrak{C}_{0}^{*}$, we define $f_{\zeta^{*}}\in Hom(\mathfrak{C}_0,\mathcal{O}_0)$ by setting
$$f_{\zeta^{*}}(\zeta)=\langle \zeta^{*},\zeta\rangle, \ {\rm for\ any}\ \zeta\in \mathfrak{C}_0.$$
If $\zeta^{*}=\sum_{v\in M(\bar{f},\mu)} \xi^{*}(v)\pi^{-w(v)}\Lambda^{-m(v)}x^{-v}\in \mathfrak{C}_{0}^{*}$ such that $f_{\zeta^{*}}=0$, that is, $ \langle \zeta^{*},\zeta\rangle=0$ for all $\zeta\in \mathfrak{C}_0$, then $\zeta^{*}=0$ since $\pi^{w(v)}\Lambda^{m(v)}x^v\in \mathfrak{C}_0$.
Thus the mapping $\zeta^{*} \mapsto f_{\zeta^{*}}$ is injective.
For $f\in Hom(\mathfrak{C}_0,\mathcal{O}_0)$, we let
$$\zeta_f^{*}:=\sum_{v\in M(\bar{f},\mu)}\frac{f(\pi^{w(v)}\Lambda^{m(v)}x^v)}{\pi^{w(v)}\Lambda^{m(v)}x^v}\in \mathfrak{C}_{0}^{*}.$$
Then for all $\zeta\in \mathfrak{C}_0$,
$$\langle \zeta_f^{*}, \zeta\rangle=f(\zeta).$$
Thus the mapping $\zeta^{*} \mapsto f_{\zeta^{*}}$ is also surjective.
This finishes the proof of Lemma \ref{lem4}.

\end{proof}

For $l=1,2$, we define
$$ D_{l,\Lambda}^{*} :=-x_l\frac{\partial }{\partial x_l}+x_l \frac{\partial F^{(0)}(\Lambda,x)}{\partial x_l}.$$
It is also can be written as
\begin{equation}\label{eq16}
D_{l,\Lambda}^{*} =-\exp F^{(0)}(\Lambda,x)\cdot x_l\frac{\partial }{\partial x_l}\cdot \frac{1}{\exp F^{(0)}(\Lambda,x)}.
\end{equation}
If $\zeta^{*}\in \mathfrak{C}_{0}^{*}$ and $\zeta\in \mathfrak{C}_{0}$, then
$$\langle D_{l,\Lambda}^{*}\zeta^{*},\zeta\rangle=\langle \zeta^{*},D_{l,\Lambda}\zeta\rangle.$$

Let $\{e_1,\cdots,e_N\}$ denote $\bar{B}$. Elements $e_1,\cdots,e_N$ are linearly independent over $\mathcal{O}_0$.
Define space $\mathfrak{B}$ by
$$\mathfrak{B}:=\mathcal{O}_0e_1+\mathcal{O}_0e_2+\cdots+\mathcal{O}_0e_N.$$

\begin{lem}\label{lem5}
As $\mathcal{O}_0$-modules, we have
\begin{equation}\label{eq26}
\mathfrak{C}_0=\mathfrak{B}\oplus \sum_{l=1}^2D_{l,\Lambda}\mathfrak{C}_0.
\end{equation}
\end{lem}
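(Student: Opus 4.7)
The plan is to prove the two required statements—that $\mathfrak{C}_0 = \mathfrak{B} + \sum_{l=1}^{2}D_{l,\Lambda}\mathfrak{C}_0$ and that $\mathfrak{B} \cap \sum_{l=1}^{2}D_{l,\Lambda}\mathfrak{C}_0 = 0$—separately, imitating at the level of $\mathfrak{C}_0$ the argument that was used for $\mathcal{C}_0$ in Theorem \ref{thm5}, but replacing the Banach-space completion by the $\tilde{\pi}$-adic structure on $\mathfrak{C}_0$.

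For generation, given $\zeta\in\mathfrak{C}_0$, I would run a successive approximation modulo $\tilde{\pi}$. The key reduction is $\mathfrak{C}_0/\tilde{\pi}\mathfrak{C}_0\cong T$, analogous to the identification $\mathcal{C}_0/\tilde{\pi}\mathcal{C}_0\cong T$ used in the cohomological setup. Applying Theorem \ref{thm1} to the reduction $\bar{\zeta}\in T$ produces $\bar{b}_0\in\tilde{V}$ (a $\mathbb{F}_q$-linear combination of elements of $\bar{\bar{B}}$) and $\bar{\xi}_{l,0}\in T$ with $\bar{\zeta}=\bar{b}_0+\bar{D}_1\bar{\xi}_{1,0}+\bar{D}_2\bar{\xi}_{2,0}$. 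Lifting termwise to $\mathfrak{C}_0$ gives $b_0\in\mathfrak{B}$ and $\xi_{l,0}\in\mathfrak{C}_0$ with residue $\zeta-b_0-\sum_{l}D_{l,\Lambda}\xi_{l,0}\in\tilde{\pi}\mathfrak{C}_0$. Write this residue as $\tilde{\pi}\zeta^{(1)}$ and iterate to produce sequences $(b_k)_{k\ge0}$ in $\mathfrak{B}$ and $(\xi_{l,k})_{k\ge0}$ in $\mathfrak{C}_0$; summing
\[
b:=\sum_{k\ge0}\tilde{\pi}^{k}b_k,\qquad \xi_l:=\sum_{k\ge0}\tilde{\pi}^{k}\xi_{l,k}
\]
then yields the required decomposition $\zeta=b+D_{1,\Lambda}\xi_1+D_{2,\Lambda}\xi_2$, the $\tilde{\pi}^{k}$ factor giving the requisite $\tilde{\pi}$-adic convergence of the coefficients.

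For directness, I would use the natural embedding $\mathfrak{C}_0\hookrightarrow\mathcal{C}_0$, since elements of $\mathfrak{C}_0$ written in the distinguished basis $\{\pi^{w(v)}\Lambda^{m(v)}x^{v}\}_{v\in M(\bar{f},\mu)}$ sit inside the $p$-adic Banach space $\mathcal{C}_0$. Under this embedding, $\mathfrak{B}$ lands in $\sum_{e_i\in\bar{B}}\mathcal{O}_0 e_i$ and $\sum_{l}D_{l,\Lambda}\mathfrak{C}_0$ lands in $\sum_{l}D_{l,\Lambda}\mathcal{C}_0$. By the direct-sum decomposition in Theorem \ref{thm5}, any element of $\mathfrak{B}\cap\sum_{l}D_{l,\Lambda}\mathfrak{C}_0$ must be $0$.

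The main obstacle will lie in the existence step, specifically in verifying that the series $b$ and $\xi_l$ are bona fide elements of $\mathfrak{B}$ and $\mathfrak{C}_0$ respectively. Beyond the automatic $\tilde{\pi}$-adic convergence of each coefficient in $\mathcal{O}_0$, one must ensure that each weight-graded piece of the sums stabilizes: since $D_{l,\Lambda}$ can shift monomial weight by $\le 1$, the supports of the $\xi_{l,k}$ can in principle grow with $k$. The fix is that Theorem \ref{thm1} provides a decomposition compatible with the weight filtration of $T$, so the lifts $b_k$ and $\xi_{l,k}$ can be chosen with weight bounded in terms of that of $\zeta^{(k)}$; combined with the $\tilde{\pi}^k$ damping, at every fixed weight only finitely many iterations contribute modulo any given power of $\tilde{\pi}$, which is what makes the series converge in the $\tilde{\pi}$-adic topology of $\mathfrak{C}_0$.
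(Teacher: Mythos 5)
Your proposal is correct, and its directness half coincides with the paper's: both deduce $\mathfrak{B}\cap\sum_{l=1}^2D_{l,\Lambda}\mathfrak{C}_0=0$ from the inclusion $\mathfrak{C}_0\subseteq\mathcal{C}_0$ and the direct sum in Theorem \ref{thm5}. For surjectivity, however, you take a genuinely different route. The paper does not lift modulo $\tilde{\pi}$: it runs the explicit reduction procedure of Lemmas \ref{lem1} and \ref{thm3} directly on a polynomial $\zeta$ of bounded weight, obtaining a decomposition whose coefficients are rational functions of $\Lambda$ (denominators come from the divisions in those lemmas), then compares this with the $\mathcal{O}_0$-coefficient decomposition of $\zeta$ inside $\mathcal{C}_0$ given by Theorem \ref{thm5}, clears denominators, and uses the directness of Theorem \ref{thm5} to conclude $\zeta\in\mathfrak{B}\cap\mathfrak{C}_0^{(m)}+\sum_{l=1}^2D_{l,\Lambda}\mathfrak{C}_0^{(m-1)}$. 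You instead rerun the successive-approximation argument by which Theorem \ref{thm5} is itself deduced from Theorem \ref{thm1}, but with control of supports: reduce mod $\tilde{\pi}$ (using $\mathfrak{C}_0/\tilde{\pi}\mathfrak{C}_0\cong T$), decompose by Theorem \ref{thm1}, lift, iterate, and sum $\tilde{\pi}$-adically. You correctly single out the only point where this could fail: $\mathfrak{C}_0$ consists of finite sums and its $\tilde{\pi}$-adic completion is essentially $\mathcal{C}_0$, so if the supports of the $\xi_{l,k}$ grew with $k$ the limit would only land in $\mathcal{C}_0$ and you would merely have reproved Theorem \ref{thm5}. Your fix is the right one: the decomposition of Theorem \ref{thm1} is compatible with the weight filtration, so $\bar{\xi}_{l,k}$ can be chosen of weight at most $W(\zeta)-1$ and $\bar{b}_k$ of weight at most $W(\zeta)$, every iterate $\zeta^{(k)}$ then stays supported on the finite set $\{v:w(v)\le W(\zeta)\}$, and completeness of $\mathcal{O}_0$ puts the limits $b$ and $\xi_l$ in $\mathfrak{B}$ and $\mathfrak{C}_0$ respectively. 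What your route buys is a self-contained lifting argument that avoids the paper's denominator-clearing comparison (which is stated rather tersely); what the paper's route buys is reuse of the explicit reductions of Section 3 and a statement that is filtered by weight from the outset.
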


\begin{proof}
Clearly, the left side of (\ref{eq26}) contains the right side.
If $\zeta$ is a polynomial in $\mathfrak{C}_0$ and written as $\zeta=\bigoplus_{v\in M(\bar{f},\mu)}\xi(v)\pi^{w(v)}\Lambda^{m(v)}x^v$, then define $$W(\zeta):=\max\{W(m(v);v)|\xi(v)\neq 0\}.$$
Let
$$\mathfrak{C}_0^{(m)}:=\{\zeta\in \mathfrak{C}_0: W(\zeta)\le m\}$$
for any $m\in (1/D)\mathbb{Z}_{>0}$.
To reverse the inclusion, let $\zeta\in \mathfrak{C}_0^{(m)}$.
By similar arguments as in the proof of Lemma \ref{lem1} and \ref{thm3}, one can derive that
\begin{equation}\label{eq29}
\zeta=\frac{c_1(\Lambda)}{a_1(\Lambda)}e_1+\cdots+\frac{c_N(\Lambda)}{a_N(\Lambda)}e_N+D_{1,\Lambda}h_1+D_{2,\Lambda}h_2,
\end{equation}
where $a_i(\Lambda), c_i(\Lambda)\in \mathcal{O}_0$ for $i=1,\cdots, N$, and
$h_1, h_2$ are polynomials whose coefficients are of the form of $ \frac{g_1(\Lambda)}{g_2(\Lambda)}$ with polynomials $g_1(\Lambda)$ and
$g_2(\Lambda)$.
Note that $\mathfrak{C}_0\subseteq \mathcal{C}_0$. Then $\zeta\in \mathcal{C}_0$. It follows from
Theorem \ref{thm5} that
\begin{equation}\label{eq30}
\zeta=b_1(\Lambda)e_1+\cdots+b_N(\Lambda)e_N+D_{1,\Lambda}\tilde{h}_1+D_{2,\Lambda}\tilde{h}_2
\end{equation}
where $b_i(\Lambda)\in \mathcal{O}_0$ for $i=1,\cdots, N$, and
$\tilde{h}_1,\ \tilde{h}_2 \in \mathcal{C}_0$.
Multiply element $C(\Lambda)\in \mathcal{O}_0$ to (\ref{eq29}) and (\ref{eq30}) so that the coefficients of
$C(\Lambda)\zeta$ are in $\mathcal{O}_0$. We consider the injective map $\iota: \mathfrak{C}_0\rightarrow \mathcal{C}_0$.
Clearly, $\iota(\mathfrak{B})\subseteq \sum_{v\in B}\mathcal{O}_0\pi^{m(v)}\Lambda^{m(v)}x^v$ and $\iota(\sum_{l=1}^2D_{l,\Lambda}\mathfrak{C}_0)\subseteq \sum_{l=1}^2D_{l,\Lambda}\mathcal{C}_0$.
Using the directness of Theorem \ref{thm5}, one has that
$\zeta\in \mathfrak{B}\cap \mathfrak{C}_0^{(m)}+\sum_{l=1}^2D_{l,\Lambda}\mathfrak{C}_0^{(m-1)}$.
Hence
$$\mathfrak{C}_0^{(m)}\subseteq \mathfrak{B}\cap \mathfrak{C}_0^{(m)}+\sum_{l=1}^2D_{l,\Lambda}\mathfrak{C}_0^{(m-1)}.$$
It follows that the right side of (\ref{eq26}) contains the left side.
Hence
$$\mathfrak{C}_0= \mathfrak{B}+\sum_{l=1}^2D_{l,\Lambda}\mathfrak{C}_0.$$

The directness follows from Theorem \ref{thm5}.
The proof of Lemma \ref{lem5} is finished.
\end{proof}

Let $$\mathcal{W}_{\Lambda}=\mathfrak{C}_0/ \sum_{l=1}^2D_{l,\Lambda}\mathfrak{C}_0.$$
The dual space of $\mathcal{W}_{\Lambda}$ is the annihilator in $\mathfrak{C}_{0}^{*}$ of $\sum_{l=1}^2D_{l,\Lambda}\mathfrak{C}_{0}$,
which we denote by
$$\mathcal{W}_{\Lambda}^{*}:=\{\zeta^{*}\in \mathfrak{C}_{0}^{*}: D_{l,\Lambda}^{*}(\zeta^{*})=0,\ l=1,2\}.$$

Now we specialize $\Lambda$ an variable to $\lambda$ a parameter taking values in $\Omega$.
We define the space $\mathcal{W}_{\lambda}^{*}$ obtained by specializing the elements of $\mathcal{W}_{\Lambda}^{*}$ at $\Lambda=\lambda$.
The $\mathcal{O}_0$ dual space $\mathcal{W}_{\Lambda}^{*}$ to $\mathcal{W}_{\Lambda}$ is freely generated by $\{\zeta_{i,\Lambda}^{*}\}_{i=0}^{N-1}$, the dual basis to $\{1,D_{\Lambda}(1),\cdots, D_{\Lambda}(1)^{N-1}\}$. Then
 $\mathcal{W}_{\lambda}^{*}$ is a vector space over $\Omega$ with basis $\{\zeta_{i,\lambda}^{*}\}_{i=0}^{N-1}$.

If $\lambda,z\in \Omega$, then we define operator
$$T_{z,\lambda}:=\exp \pi(\lambda-z)/x^u.$$
Operator $T_{z,\lambda}$ is a well defined when $\lambda,z$ are closed enough.

It is useful to view $T_{z,\lambda}$ as
\begin{equation}\label{eq15}
T_{z,\lambda}= \frac{\exp F^{(0)}(\lambda,x) }{\exp F^{(0)}(z,x)}.
\end{equation}
From (\ref{eq16}) and (\ref{eq15}), we have that
\begin{equation}\label{eq20}
D_{l,\lambda}^{*}\circ T_{z,\lambda}=T_{z,\lambda}\circ D_{l,z}^{*}.
\end{equation}
Hence if $\zeta^{*}\in \mathcal{W}_{z}^{*}$, i.e. $D_{l,z}^{*}( \zeta^{*})=0 $,
then $D_{l,\lambda}^{*}\circ T_{z,\lambda}(\zeta^{*})=0$ for each $l=1,2$.
But $\mathcal{W}_{\lambda}^{*}$ is the kernel of the operator $\{D_{l,\lambda}^{*}\}$ with $l=1,2$.
Hence $T_{z,\lambda}(\zeta^{*})\in \mathcal{W}_{\lambda}^{*}$,
so that $T_{z,\lambda}$ is a map from $\mathcal{W}_{z}^{*}$ to $\mathcal{W}_{\lambda}^{*}$. Since
$T_{z,\lambda}$ is multiplication by an exponential, it is invertible.
Hence $T_{z,\lambda}$ is a linear isomorphism from $\mathcal{W}_{z}^{*}$ to $\mathcal{W}_{\lambda}^{*}$.

Now we view $z$ as fixed and $\Lambda$ as a variable in $\Omega$.
We extend scalars by lifting the base field $\Omega$ of $\mathcal{W}_{\Lambda}^{*}$
and $\mathcal{W}_{z}^{*}$ to the field of functions meromorphic at $z$.
We then have the commutative diagram
\begin{equation}\label{eq03}
\xymatrix{
&\mathcal{W}_{z}^{*} \ar[d]^{\Lambda \frac{\partial}{\partial \Lambda}} \ar[r]_{T_{z,\Lambda}} & \mathcal{W}_{\Lambda}^{*} \ar[d]^{\epsilon^{*}}\\
& \mathcal{W}_{z}^{*} \ar[r]_{T_{z,\Lambda}} & \mathcal{W}_{\Lambda}^{*}},
\end{equation}
where we define on $\mathcal{W}_{\Lambda}^{*}$ that
$$\epsilon^{*}:=\Lambda\frac{\partial}{\partial \Lambda}-\frac{\pi \Lambda}{x^u}.$$
It can be checked that
$$\epsilon^{*}=\exp F^{(0)}(\Lambda,x)\circ \Lambda\frac{\partial}{\partial \Lambda}\circ \frac{1}{\exp F^{(0)}(\Lambda,x)}.$$
By the definition, one has
$$ D_{l,\Lambda}^{*}\circ\epsilon^{*}=\epsilon^{*}\circ D_{l,\Lambda}^{*}.$$
Thus $\epsilon^{*}$ does map $ \mathcal{W}_{\Lambda}^{*}$ to $\mathcal{W}_{\Lambda}^{*}$.

Define a map $\epsilon$ on $ \mathfrak{C}_0$ by
$$\epsilon:= \Lambda\frac{\partial}{\partial \Lambda}+\frac{\pi \Lambda}{x^u}=D_{\Lambda}.$$
Since $\epsilon$ communicates with $D_{1,\Lambda}$ and $D_{2,\Lambda}$, one has that $\epsilon$ induces a map
$$\epsilon: \mathfrak{C}_0/\sum_{l=1}^2D_{l,\Lambda} \mathfrak{C}_0\rightarrow \mathfrak{C}_0/\sum_{l=1}^2D_{l,\Lambda} \mathfrak{C}_0.$$
One can check that
\begin{equation}\label{eq17}
\langle \epsilon^{*}\zeta^{*},\zeta\rangle+\langle \zeta^{*},\epsilon\zeta\rangle=\Lambda\frac{\partial}{\partial \Lambda}
\langle \zeta^{*},\zeta\rangle
\end{equation}
for $\zeta^{*}\in \mathfrak{C}_0^{*}$, $\zeta\in \mathfrak{C}_0$.

We are interested in computing the matrix of $\epsilon^{*}$ with
respect to the basis $\{\zeta_{i,\Lambda}^{*}\}_{i=0}^{N-1}$ of
$\mathcal{W}_{\Lambda}^{*}$.
By (\ref{eq17}), we have
$$\epsilon^{*}(\zeta_{0,\Lambda}^{*},\cdots,\zeta_{N-1,\Lambda}^{*} )^t=-G (\zeta_{0,\Lambda}^{*},\cdots,\zeta_{N-1,\Lambda}^{*} )^t,$$
where $G^t$ is given as Corollary \ref{thm4}.
From (\ref{eq03}), we see that $\zeta^{*}=\sum_{i=0}^{N-1}C_i(\Lambda)\zeta_{i,\Lambda}^{*}$ is the image under $T_{z,\Lambda}$
of element independent of $\Lambda$ if and only if $\epsilon^{*} \zeta^{*}=0$.
Then by (\ref{eq17}), we have
\begin{equation}\label{eq18}
\Lambda\frac{\partial}{\partial \Lambda}(C_0(\Lambda),C_1(\Lambda),\cdots,C_{N-1}(\Lambda))=(C_0(\Lambda),C_1(\Lambda),\cdots,C_{N-1}(\Lambda))G.
\end{equation}

For $\zeta^{*}(x) \in \mathcal{W}_{\Lambda^p}^{*}$, we define the map $\alpha_{\Lambda}^{*}$ of $ \mathcal{W}_{\Lambda^p}^{*}$ into $\mathcal{W}_{\Lambda}^{*}$ by
$$\alpha_{\Lambda}^{*}( \zeta^{*}(x))=F^{(0)}(\Lambda,x)\zeta^{*}(x^p).$$
We define $\Psi_p$ by
$$\Psi_p(\sum A(v)x^v)=\sum A(v)x^{pv}.$$
Then
$$\alpha_{\Lambda}^{*}=\exp F^{(0)}(\Lambda,x)\circ \Psi_p \circ \frac{1}{\exp F^{(0)}(\Lambda,x)}.$$
When $\Lambda$ are close to $z$, the following diagram
\begin{equation}\label{eq19}
\xymatrix{
&\mathcal{W}_{z^p}^{*} \ar[d]^{\alpha_z^{*}} \ar[r]_{T_{z^p,\Lambda^p}} & \mathcal{W}_{\Lambda^p}^{*} \ar[d]^{\alpha_{\Lambda}^{*}}\\
& \mathcal{W}_{z}^{*} \ar[r]_{T_{z,\Lambda}} & \mathcal{W}_{\Lambda}^{*}}
\end{equation}
commutes.

Now we give the proof of Theorem \ref{thm7}.
If $C=(C_0(\Lambda),C_1(\Lambda),\cdots,C_{N-1}(\Lambda))$ is a solution of (\ref{eq18}), then
$$\zeta^{*}=\sum_{i=0}^{N-1}C_i(\Lambda) \zeta_{i,\Lambda}^{*}=T_{z,\Lambda}(\zeta_z^{*}), $$
where $\zeta_z^{*}$ is an element of $W_z^{*}$ which is independent of $\Lambda$.
We write $C^{\sigma \varphi}$ for the process of replacing each coefficient by its image under $\sigma$ and replacing $(\Lambda-a)$
by $(\Lambda^p-\sigma a)$.
We apply $\sigma$ to $\zeta_z^{*}$, then
\begin{equation*}
\sum_{i=0}^{N-1}C_i(\Lambda)^{\sigma \varphi} \zeta_{i,\Lambda^p}^{*} =T_{\sigma z,\Lambda^p}(\zeta_z^{*})^{\sigma}.
\end{equation*}
Thus
\begin{equation}\label{eq28}
\alpha_{\Lambda}^{*}\Big(\sum_{i=0}^{N-1}C_i(\Lambda)^{\sigma \varphi} \zeta_{i,\Lambda^p}^{*}\Big) =\alpha_{\Lambda}^{*}\circ T_{\sigma z,\Lambda^p}(\zeta_z^{*})^{\sigma}.
\end{equation}
Note that
$$\alpha_{\Lambda}^{*}\circ T_{\sigma z,\Lambda^p}=T_{z,\Lambda}\circ \alpha_z^{*}\circ T_{\sigma z,z^p}.$$
Hence the left side of (\ref{eq28}) is the image under $T_{z,\Lambda}$ independent of $\Lambda$.
Thus
$$(C_0(\Lambda),C_1(\Lambda),\cdots,C_{N-1}(\Lambda))^{\sigma\varphi}U(\Lambda)$$ is a solution of equation of (\ref{eq18}),
where $U(\Lambda)$ is the matrix of $\alpha_{\Lambda}^{*}$ with respect to the basis
 basis $\{\zeta_{i,\Lambda}^{*}\}_{i=0}^{N-1}$. By the duality, one also has that $U(\Lambda)$ is the matrix of
$\alpha$ with respect to the basis $\{1, D_{\Lambda}(1),\cdots, D_{\Lambda}^{N-1}(1)\}$.
Hence Theorem \ref{thm7} is true.

This shows that (\ref{eq18}) has a strong Frobenius structure in the sense of Dwork.
Dwork \cite{[Dwo742]} has conjectured that the Frobenius structures of differential equations exists quite widely.
Hence our work confirms this conjecture for our family.

\begin{center}
{\sc Acknowledgements}
\end{center}
The authors would like to thank Steven Sperber for drawing their
attention to the problem considered in this paper, and particularly
for many helpful and stimulating discussions
during their visiting at the University of Minnesota. We thank DaQing Wan for his
invaluable help in this work. Liping Yang also thanks Huaiqian Li and Mingfeng Zhao for many helpful discussions.
\bibliographystyle{amsplain}

\end{document}